\documentclass{article}

\usepackage[english]{babel}

\usepackage[letterpaper,top=2cm,bottom=2cm,left=3cm,right=3cm,marginparwidth=1.75cm]{geometry}

\usepackage{amsthm}
\usepackage{amssymb}
\newtheorem{definition}{Definition}
\newtheorem{theorem}{Theorem}

\newtheorem{remark}{Remark}
\usepackage{indentfirst}
\usepackage{amsmath}
\usepackage{graphicx}
\usepackage{amsfonts}
\usepackage{booktabs}
\usepackage{subcaption}
\usepackage{float}
\usepackage[colorlinks=true, allcolors=blue]{hyperref}

\newtheorem{lemmaA}{Lemma}

\usepackage{mathtools}

\DeclarePairedDelimiter{\norm}{\lVert}{\rVert}

\title{Inertial Manifold Neural Operator for Dissipative Time-Dependent Partial Differential Equations}

\author{Xiaoyang Xie\footnote{Program in Applied and Computational Mathematics (PACM), Princeton University, Princeton, 08540, New Jersey, United States. Email: xiaoyangxie@princeton.edu.}~ and
Clarence W. Rowley\footnote{Department of Mechanical and Aerospace Engineering, Princeton
University, Princeton, 08540, New Jersey, United States. Email: cwrowley@princeton.edu.}
}

\begin{document}
\maketitle

\section*{Abstract}
In this paper, we introduce the Inertial Manifold Neural Operator (IMNO) for solving dissipative time-dependent partial differential equations (PDEs). The long-time dynamics of such systems often exhibit an effective low-dimensional structure due to dissipation. Unlike standard neural operator architectures such as the Fourier Neural Operator (FNO), IMNO explicitly leverages the low-dimensional structure to achieve better physical interpretability, accuracy, and stability in long-horizon autoregressive training and prediction for nonlinear dissipative PDEs.
For shift-equivariant PDEs, we further introduce a shift-equivariant variant (IMNO-SE) of the proposed neural operator, ensuring that a spatial shift in the input induces the same spatial shift in the output. This symmetry-preserving inductive bias substantially improves its performance in shift-equivariant PDEs.
Extensive benchmark experiments are presented to evaluate IMNO's performance numerically.

\section{Introduction}
Dissipative partial differential equations model the evolution of systems in which certain functionals of the state—such as energy or enstrophy—are dissipated by mechanisms like viscosity, diffusion, or relaxation. 
This dissipative structure is central to
models in many areas, such as fluid dynamics, chemistry, and biology  \cite{kondo2010reaction,temam2012infinite,turing1990chemical}. Understanding their behavior is therefore important for modeling and predicting complex systems in nature and engineering.

Although these equations are typically infinite-dimensional and may generate intricate nonlinear dynamics, dissipation often suppresses transient degrees of freedom over time, leading to a simpler finite-dimensional description of the long-time dynamics \cite{robinson1995finite}.

Reduced-order modeling (ROM) is a practical way to exploit such low-dimensional structure by approximating the original system with a low-dimensional model. This can greatly reduce simulation cost while also improving understanding of the dominant mechanisms in the systems. A large body of work has been developed along this direction, ranging from classical approaches such as proper orthogonal decomposition (POD) \cite{Berkooz1993POD,rowley2004model,sirovich1987turbulence} to later data-driven methods such as dynamic mode decomposition (DMD) \cite{tu2014dynamic,williams2015data}.

Inertial manifold theory provides a rigorous theoretical framework for understanding why the long-time behavior of many dissipative partial differential equations can be described by finite-dimensional dynamics \cite{constantin2012integral,foias1988inertial,langa1999determining,robinson1996asymptotic,rosa1996inertial}. Its central idea is that, although the dynamics of such PDEs are defined on an infinite-dimensional function space, their eventual behavior are governed by a finite-dimensional invariant manifold that exponentially attracts all trajectories. So, the asymptotic dynamics can be reduced, at least in principle, to a finite-dimensional system evolving on that manifold. Some ROMs have also been developed based on ideas from inertial manifold theory, aiming to approximate the effective long-time dynamics of dissipative PDEs on a low-dimensional manifold \cite{de2023data,linot2020deep}. However, because a ROM is ultimately a low-dimensional model, it can only represent dynamics on a low-dimensional manifold itself. As a result, it inevitably neglects the transient behavior outside that manifold, even though such behavior may still be important for accurately recovering the full dynamics.

In recent years, neural operators have emerged as a powerful class of methods in scientific machine learning \cite{chang2026unsupervised, cheng2026podno,dummer2026ronom,duruisseaux2025fourier,li2020fourier,li2021learning,liu2023tipping,lu2021learning}. Unlike conventional neural networks that operate on finite-dimensional vectors, neural operators are designed to learn mappings between infinite-dimensional function spaces, and can be trained to approximate solution operators of partial differential equations. Once trained on data, a neural operator can efficiently evaluate new input functions and can often be applied across different spatial discretizations. Because of these advantages, they have been successfully applied to many scientific problems, including fluid dynamics, materials science, and climate modeling \cite{jiang2023efficient,wang2024prediction,you2022learning}.

Among neural-operator architectures, the Fourier Neural Operator (FNO) has emerged as one of the most influential and effective frameworks \cite{li2020fourier}. By parameterizing the operator in Fourier space, FNO can efficiently capture nonlocal interactions and exhibit good accuracy, while maintaining a relatively small computational complexity. These advantages make FNO an effective and widely used framework for operator learning across many PDE tasks. However, despite its success, FNO still has several limitations. First, like many purely data-driven neural operators, it lacks physical interpretability: the learned dynamics are represented in a largely black-box manner and do not explicitly reflect the underlying structure of the PDE. Second, although FNO often performs well in one-step prediction, its training process can become unstable in multi-step autoregressive rollout, especially over long time horizons. 

% For dissipative time-dependent PDEs, the long-time behavior is often governed by an effectively low-dimensional structure. 

% The discussion above suggests a combination of the structural interpretability of reduced-order modeling with the expressive power of neural operators. Motivated by this observation, we develop the inertial manifold neural operator (IMNO), a new neural-operator architecture that learns both the dominant long-time dynamics and the function-valued residual correction to preserve model's accuracy in the transient regime. By decomposing the solution into a manifold component and a residual component, IMNO learns a more physically interpretable representation of the system. Moreover, this decomposition makes the learning problem more structured and better conditioned, which can improve training stability in long-horizon autoregressive rollout.

% IMNO can be viewed as a bridge between reduced-order modeling and neural operators. Unlike traditional reduced-order models, which mainly describe the dynamics of the reduced system and often neglect the remaining components outside it, IMNO explicitly incorporates both the manifold component and the residual component. This allows the model to retain the interpretability and compact structure of reduced-order modeling, while using neural operators to preserve the expressive power needed to reconstruct the full PDE dynamics.

The discussion above suggests a combination of the structural interpretability of reduced-order modeling with the expressive power of neural operators. Motivated by this perspective, we develop the Inertial Manifold Neural Operator (IMNO), a new neural-operator architecture that learns both the dominant long-time dynamics on a low-dimensional manifold and a function-valued residual correction for transient off-manifold behavior. Unlike traditional reduced-order models, which mainly describe the dynamics of the reduced system and often neglect the remaining components outside it, IMNO explicitly decomposes the solution into a manifold component and a residual component. This allows the model to retain the interpretability and compact structure of reduced-order modeling, while using neural operators to preserve the expressive power needed to reconstruct the full PDE dynamics. Moreover, this decomposition makes the learning problem more structured and better conditioned, which can improve training stability in long-horizon autoregressive rollout.

Extensive numerical experiments demonstrate the performance of the proposed neural operator. We observe that IMNO remains stable in long-horizon multistep rollout training, achieves good accuracy, and is able to effectively extract the underlying low-dimensional long-time dynamics.

The remainder of this paper is organized as follows. Section 2 reviews the necessary background on neural operators and inertial manifold theory. Section 3 provides a detailed description of the Inertial Manifold Neural Operator (IMNO). Section 4 introduces the shift-equivariant variant of the proposed neural operator. Section 5 presents numerical experiments on several dissipative PDE benchmarks. Section 6 concludes the paper, and the appendix gives a proof of the universal approximation theorem for our formulation.

\section{Background}
\subsection{Problem Setting}

We consider nonlinear dissipative time-dependent partial differential equations posed on a spatial domain $\Omega \subset \mathbb{R}^d$. Let $H$ be a Banach or Hilbert space of functions on $\Omega$, and let $u(t)\in H$ denote the state of the system at time $t$. The evolution is governed by a PDE that generates a solution semigroup
\[
S(t): H \to H, \qquad u_0 \mapsto u(t;u_0),
\]
where $u(t;u_0)$ is the solution at time $t$ with initial condition $u_0$.

In operator-learning tasks, the objective is not merely to approximate a finite-dimensional map between discretized vectors, but rather to learn the infinite-dimensional solution operator associated with the PDE. In particular, given a time step $\Delta t > 0$, we are interested in the one-step evolution operator
% \[
% \mathcal{G}^{\dagger}: H \to H, \qquad \mathcal{G}^{\dagger}(u_t)=u_{t+\Delta t}=S(\Delta t)u_t,
% \]
\[
\mathcal{G}: H \to H, \qquad \mathcal{G}(u)=S(\Delta t)u,
\]
repeated application of this operator then produces an autoregressive rollout of the full trajectory.

We assume access to $N$ trajectories generated by the PDE,
\[
\mathcal{D}=\left\{\left\{u^{(j)}_n\right\}_{n=0}^{T}\right\}_{j=1}^{N},
\
u^{(j)}_n := u(t_n;u^{(j)}_0),\ \ t_n=n\Delta t,
\]
where each initial condition $u^{(j)}_0$ is sampled from an underlying
probability distribution on $H$. 
% This dataset provides supervised
% input--output pairs for one-step evolution,
% \[
% u^{(j)}_n \mapsto u^{(j)}_{n+1}.
% \]
% These pairs provide training samples from the underlying one-step solution operator \(\mathcal{G}^{\dagger}\).

Accordingly, our goal is to construct a parametric neural operator
\[
\mathcal{G}_{\theta}: H \to H, \qquad \mathcal{G}_{\theta}(u)\approx S(\Delta t)u
\]
that approximates $\mathcal{G}$. At inference time, for any given initial condition $u_0$, the learned operator is applied
autoregressively to generate a multi-step forecast:
\[
{u}_{n+1}
=
\mathcal{G}_{\theta}({u}_n),
\qquad n=0,1,\ldots,N_t-1,
\]
producing the predicted trajectory
$\{{u}_n\}_{n=0}^{N_t}$.

\paragraph{Difference between Operator Learning and Vector-to-Vector Function Learning} Unlike standard vector-to-vector function learning, operator learning aims to approximate a function-to-function map; therefore, the learned model should satisfy two key properties.

First, it should be resolution-independent. In
practice, input functions and output functions are only available as samples on spatial
grids, but these sampled vectors are discretizations of underlying
functions in $H$. Hence, if the model is trained on one spatial
resolution, it should remain valid when evaluated on different
resolutions.

Second, the model class should be genuinely operator-valued rather than a
fixed finite-dimensional surrogate. Concretely, for suitable parameters, the image
$\{\mathcal{G}_{\theta}u: u\in H\}$ should not be contained in any finite-dimensional
linear subspace of $H$.

% The central premise of this work is that, for a broad class of dissipative PDEs, the long-time dynamics are effectively governed by a low-dimensional inertial structure together with rapidly decaying transverse components. This motivates the development of neural operators that explicitly exploit such structure, rather than treating the full evolution operator as an unstructured black box.

\subsection{Neural Operators}\label{sec:neural_operators}
One of the most popular neural operator frameworks is the kernel-based operators introduced in \cite{kovachki2023neural}, which 
provides the architectural foundation for the operator developed in this 
work. 

These kernel-based operators can extend to the more general setting where the input and output belong to different function spaces. More specifically, let $\Omega$ denote a bounded domain in $\mathbb{R}^d$, and let $\mathcal{X}(\Omega ; \mathbb{R}^k)$ and $\mathcal{Y}(\Omega ; \mathbb{R}^{k^{\prime}})$ denote Banach spaces of vector-valued functions over $\Omega$. The neural operator aims to learn a mapping between infinite-dimensional 
function spaces,
$$
\mathcal{G} : \mathcal{X}(\Omega; \mathbb{R}^k) \rightarrow 
\mathcal{Y}(\Omega; \mathbb{R}^{k'}),
$$
% in contrast to classical neural networks, which approximate 
% finite-dimensional functions.  
% In the context of PDEs, such an operator typically represents the solution map that sends an initial condition $u_0$ 
% (or a forcing field, coefficient field, etc.) to the corresponding 
% solution $u$ at a later time.  
% Neural operators are therefore designed to approximate operators 
% rather than functions, enabling resolution-invariant learning of PDE 
% dynamics.

The neural operator is defined as a mapping $\mathcal{G}_{\theta}: \mathcal{X}(\Omega ; \mathbb{R}^k) \rightarrow \mathcal{Y}(\Omega ; \mathbb{R}^{k^{\prime}})$ which can be expressed as a composition
$\mathcal{G}_{\theta} = \mathcal{Q} \circ \mathcal{L}_L \circ \cdots \circ \mathcal{L}_1 \circ \mathcal{P}$,
where $\mathcal{P}$ is a lifting layer, 
$\mathcal{L}_\ell$ are hidden layers, and $\mathcal{Q}$ is a projection layer.

The lifting layer $\mathcal{P}$ is given by a mapping

$$
\mathcal{P}: \mathcal{X}\left(\Omega ; \mathbb{R}^k\right) \rightarrow \mathcal{V}\left(\Omega ; \mathbb{R}^{d_c}\right), \quad u(x) \mapsto P(u(x), x),
$$

where  $\mathcal{V}\left(\Omega ; \mathbb{R}^{d_c}\right)$ is an intermediate function space, $P: \mathbb{R}^k \times \Omega \rightarrow \mathbb{R}^{d_c}$ is a learnable neural network acting between finite dimensional Euclidean spaces. 

Each hidden layer $\mathcal{L}_\ell$ takes the form
$$
\mathcal{L}_\ell(v)(x)
    = \sigma \big( W_\ell v(x) + b_\ell + (\mathcal{K}_\ell v)(x) \big),
$$
where $\mathcal{K}_\ell$ is a nonlocal integral operator defined by a specific kernel $K_{\ell}(x, y) \in \mathbb{R}^{d_c \times d_c}$, 
$$
(\mathcal{K}_\ell v)(x) 
    = \int_{\Omega} K_\ell(x,y) v(y)\,dy,
$$

Each hidden layer defines a mapping $\mathcal{L}_{\ell}: \mathcal{V}\left(\Omega ; \mathbb{R}^{d_c}\right) \rightarrow \mathcal{V}\left(\Omega ; \mathbb{R}^{d_c}\right)$. For $\ell= 1, \ldots, L$ the matrices $W_{\ell} \in \mathbb{R}^{d_c \times d_c}$, and bias $b_{\ell} \in \mathbb{R}^{d_c}$ are learnable parameters. The activation function $\sigma: \mathbb{R} \rightarrow \mathbb{R}$ acts component-wise on inputs.

Finally, the projection layer $\mathcal{Q}$ is given by a mapping,

$$
\mathcal{Q}: \mathcal{V}(\Omega ; \mathbb{R}^{d_c}) \rightarrow \mathcal{Y}(\Omega ; \mathbb{R}^{k^{\prime}}), \quad v(x) \mapsto Q(v(x)),
$$

% $$
% \mathcal{Q}: \mathcal{V}\left(\Omega ; \mathbb{R}^{d_c}\right) \rightarrow \mathcal{X}\left(\Omega ; \mathbb{R}^{k}\right), \quad v(x) \mapsto Q(v(x), x),
% $$

where $Q: \mathbb{R}^{d_c} \rightarrow \mathbb{R}^{k^{\prime}}$ is also a learnable neural network acting between finite dimensional Euclidean spaces. 

\paragraph{Fourier Neural Operators}
Among the kernel-based neural operator architectures, the Fourier Neural Operator (FNO)
has emerged as one of the most influential and effective frameworks\cite{li2020fourier}.
FNO can be viewed as a special case of kernel-based neural
operators in which the kernel is restricted to be convolutional and
Fourier–parametrized. Specifically, FNO assumes a translation-invariant kernel
$K_\ell(x,y)=K_\ell(x-y)$,
which is represented by a truncated Fourier expansion
\[
K_\ell(x-y)
  = \sum_{|k|\le k_{\max}} R_{\ell}(k)\, e^{ik(x-y)} ,
\]
where each coefficient $R_{\ell}(k)\in\mathbb{C}^{d_c\times d_c}$
is a trainable matrix.
This corresponds to learning the kernel directly in frequency space.

Recall that a general kernel operator takes the form
\[
({\mathcal K}_\ell v)(x)
  = \int_\Omega K_\ell(x,y)\, v(y)\, dy ,
\]
Substituting the Fourier representation of $K_\ell(x,y)$ into this expression
shows that ${\mathcal K}_\ell$ acts diagonally in Fourier space: each Fourier
mode of $v$ is multiplied by the learned response $R_{\ell}(k)$:
\[
\widehat{({\mathcal K}_\ell v)}(k)
  = R_{\ell}(k)\, \widehat v(k),
  \qquad |k|\le k_{\max}.
\]

Transforming back to physical space yields the spectral convolution:
\[
({\mathcal K}_\ell v)(x)
  =
  {\mathcal F}^{-1}\!\left(
       R_{\ell}(k)\,\widehat v(k)
  \right)(x).
\]

Thus each FNO hidden layer (Fourier block) takes the form

$$
{\mathcal L}_\ell(v)(x)
  = \sigma\!\Big(
        W_\ell v(x)
      + b_\ell
      + {\mathcal F}^{-1}( R_\ell \cdot {\mathcal F}(v))(x)
    \Big).
$$

The Fourier parameterization endowed by this construction gives FNO
several practical advantages. First, the use of Fourier modes
provides each layer with a global receptive field, allowing
the architecture to capture nonlocal
interactions that commonly arise in PDEs from fluid dynamics and other physical systems. Second, the spectral convolution
${\mathcal F}^{-1}(R_\ell(k)\,{\mathcal F}(v)(k))$ can be implemented
using FFTs, yielding an ${\mathcal O}(N\log N)$ computational cost per
layer and enabling FNO to scale efficiently to high-dimensional and
high-resolution grids. Third, because only a fixed number of Fourier
coefficients are learned, the representation of the operator is largely
independent of the discretization of $\Omega$; a model trained on one
resolution can often be evaluated on finer or coarser meshes without
retraining. This resolution-invariance property distinguishes FNO from
traditional convolutional or multilayer perceptron (MLP)-based architectures whose parameters
are tied directly to the grid. Finally, FNO has demonstrated strong
empirical performance across a broad range of operator-learning
benchmarks—including Darcy flow, Burgers equation, and moderate-Reynolds
Navier–Stokes—where it often achieves state-of-the-art accuracy with fewer parameters than competing architectures. These
properties together make FNO an efficient and widely adopted neural
operator model for learning mappings between function spaces.

\subsection{Autoregressive Training Stability and Dissipative Priors}

Many operator-learning tasks of practical interest, such as forecasting
solutions of time-dependent PDEs, require models that remain stable 
under long-horizon autoregressive rollout, so that they can faithfully 
capture the long-term dynamics of dissipative or even chaotic systems
\cite{li2021learning}. Although predictions are ultimately made autoregressively—
each predicted state is fed back into the model to generate the next
one—the training procedure itself may follow two distinct paradigms.

In the classical teacher-forcing approach, the model is trained to predict
the next state using the ground-truth input at every step. In contrast,
the autoregressive training paradigm recursively feeds the model's own 
predictions back into the input window during training. As advocated in
\cite{ye2025recurrent}, the latter approach encourages the model to learn the intrinsic multi-step 
evolution law of the underlying PDE rather than merely fitting short 
transitions, and has been shown to improve long-term predictive accuracy in many PDE systems.

However, despite these conceptual advantages, training models such as FNO over long autoregressive horizons can become markedly unstable. Models that train reliably under short horizons become
difficult to optimize when the autoregressive time window is extended, suggesting that the current structure of FNO may be insufficient to represent the intrinsic structure required for
stable multi-step rollouts and may also hinder effective backpropagation during autoregressive training.

Long-horizon autoregressive prediction is particularly natural for PDE
systems whose dynamics are dissipative, or at least non-expansive in a
suitable sense. Without such stability, one-step prediction errors may grow
substantially under repeated autoregressive iteration, which can severely
limit the long-horizon accuracy of learning-based solvers.
Thus, when discussing autoregressive training for long-time PDE prediction,
dissipative systems form a central and practically important setting. This suggests that incorporating dissipative
prior structure directly into neural-operator design is not only natural but
potentially essential. For instance, \cite{li2021learning} introduces additional
dissipative regularization and dissipative constraints to encode such priors
during training.

Our work adopts a stronger structural hypothesis: the underlying PDE
admits an inertial manifold, namely, an invariant finite-dimensional manifold that exponentially attracts all trajectories. Many canonical
dissipative PDEs are known to have this structure in
appropriate settings. By explicitly incorporating inertial-manifold structure into the neural-operator architecture, we obtain a more stable, accurate, and physically interpretable model for long-horizon autoregressive learning.
% By explicitly embedding inertial-manifold
% geometry into the neural-operator architecture, we obtain several
% desirable properties, including substantially improved training
% stability for long-horizon autoregressive learning.

\subsection{Inertial Manifolds and Asymptotic Completeness}
\subsubsection{The Evolution Equation}

We consider a general class of dissipative evolution equations posed on a
separable Hilbert space $H_0$,
\begin{equation}
\frac{du}{dt} + Au = f(u), \qquad u(0)=u_0\in H_0,
\label{eq:abstract_evolution}
\end{equation}
where $A:D(A)\subset H_0\to H_0$ is a positive, self-adjoint, unbounded linear
operator with compact inverse, and $f$ is a nonlinear term with some Lipschitz condition.

The operator $A$ models the dominant dissipative mechanism of the system.
Its positivity and compact inverse imply a discrete spectrum
$0<\lambda_1\le \lambda_2\le\cdots$ with $\lambda_k\to\infty$.
so, under the linear flow generated by $\frac{du}{dt} = -Au$, the high-frequency modes decay rapidly.
The nonlinearity $f$ couples these modes and, under appropriate assumptions,
preserves the dissipative structure imposed by $A$.

To describe regularity in a precise way, we introduce the fractional domains
of $A$.
For $\gamma\ge 0$, the fractional domain of $A$ is defined by
\[
D(A^\gamma)={u\in H_0: A^\gamma u\in H_0}.
\]
Equipped with the inner product
\[
\langle u,v\rangle_{D(A^\gamma)}=
\langle A^\gamma u,A^\gamma v\rangle_{H_0},
\]
the space $D(A^\gamma)$ is itself a Hilbert space. 
in concrete PDE settings, they can often be identified, up to equivalence of
norms, with fractional Sobolev spaces.

We assume that the nonlinearity $f$ is locally Lipschitz from $D(A^\alpha)$
into $D(A^\beta)$ for some $0<\alpha-\beta<1$, reflecting the fact that
nonlinear effects typically lower regularity, while the linear semigroup
generated by $-A$ has a smoothing effect.

Under these assumptions, \eqref{eq:abstract_evolution} generates a continuous
semigroup $\{S(t)\}_{t\ge 0}$ on $D(A^\alpha)$.
Solutions depend continuously on initial data in appropriate
fractional norms: on bounded sets, small differences in initial conditions
remain controlled over finite time intervals \cite{robinson1996asymptotic}.

Here and below, we denote
\[
H:=D(A^\alpha)
\]
as the Hilbert phase space.
With this convention, the equation is understood as an evolution equation on $H$, and the semigroup is written as
\[
S(t):H\to H,\qquad t\ge 0.
\]

% This convention is useful for two reasons. First, in many dissipative PDEs the
% nonlinearity is not naturally Lipschitz on the base space $H_0$ itself, but is
% well controlled on a fractional-domain phase space. Thus the space in which
% the semigroup is well posed and the space in which attraction and tracking are
% measured may be $D(A^\alpha)$ rather than $H_0$. Second, after this choice of
% phase space has been made, the inertial-manifold theory can be stated in a
% uniform way: all distances, attraction estimates, tracking estimates, and
% operator-learning maps below are understood with respect to the norm of this
% chosen Hilbert space $H$. In the special case $\alpha=0$, this convention simply
% reduces to $H=H_0$.

In this work, we are interested in dissipative systems: there exists a bounded
absorbing set $B\subset H$ such that every bounded subset of $H$ is eventually
mapped into $B$ by the flow.
If $B$ is compact and positively invariant, standard results imply the
existence of a global attractor $\mathcal{A}\subset H$.
The global attractor is compact, positively invariant, and attracts all
bounded sets as $t\to\infty$.
Although the phase space $H$ is infinite-dimensional, under some additional assumptions, the attractor has
finite dimension and captures all possible long-time behaviors of
the system \cite{robinson1995finite}.

This abstract framework provides a natural setting for studying finite-dimensional
descriptions of dissipative dynamics.

\subsubsection{Inertial Manifold}
Although the existence of a global attractor ensures that the long-time
dynamics are confined to a compact set, this alone
does not necessarily provide a convenient low-dimensional description
of the dynamics.

A genuine dimensional reduction becomes possible when the attractor is embedded
in a smooth finite-dimensional invariant manifold. On such a manifold, the
long-time dynamics admit an intrinsic finite-dimensional representation and can
be described by a closed system of ordinary differential equations. In
dissipative systems, inertial manifold theory provides a rigorous framework for
this idea: an inertial manifold is a finite-dimensional, positively invariant,
Lipschitz or smoother manifold that contains the global attractor and attracts
all trajectories at an exponential rate. Thus, it provides a
finite-dimensional structure of the long-time dynamics, with exponential
attraction replacing the generally slower convergence toward the global
attractor.

% Following the classical formulation of inertial manifolds
% \cite{foias1988inertial}, consider the dissipative evolution equation
% \eqref{eq:abstract_evolution}, and assume that it generates a semigroup
% \(\{S(t)\}_{t\ge 0}\) on \(H\).

\begin{definition}[Inertial manifold]
A set $\mathcal{M}\subset H$ is called an \emph{inertial manifold} for
\eqref{eq:abstract_evolution} if it satisfies the following:
\begin{enumerate}
\item $\mathcal{M}$ is a finite-dimensional Lipschitz
manifold;
\item $\mathcal{M}$ is positively invariant under the flow, i.e.,
\[
S(t)\mathcal{M} \subset \mathcal{M}, \qquad \forall\, t\ge 0;
\]
\item $\mathcal{M}$ attracts all trajectories at an exponential rate:
there exists a $\mu>0$ such that for every $u_0 \in H$ there is a $C=C(u_0)$ satisfying
\[
\mathrm{dist}\big(S(t)u_0,\mathcal{M}\big)
\le C e^{-\mu t},
\qquad \forall\, t\ge 0.
\]
\end{enumerate}
\end{definition}

The exponential attraction property is stronger than the attraction property
of a general global attractor. It implies that every bounded set of initial
conditions approaches $\mathcal M$ at an exponential rate, so that the
off-manifold part of the dynamics becomes negligible after a transient period.
As a result, the long-time behavior of the system can be faithfully
described by the finite-dimensional dynamics induced on $\mathcal{M}$.

\subsubsection{Spectral gap condition and existence of inertial manifolds}

Almost all known existence results for inertial manifolds rely on the presence
of a \emph{spectral gap} in the linear operator $A$.
This condition reflects a strong separation between low and high modes and
provides a mechanism by which the fast-decaying components can be slaved to
the slow dynamics.

Recall that $A$ is a positive self-adjoint operator with compact inverse, with eigenvalues
$0<\lambda_1\le \lambda_2\le\cdots$ and corresponding spectral projections
$P_n$ and $Q_n = I-P_n$. The nonlinearity $f$ is globally Lipschitz from $D(A^\alpha)$ into
$D(A^\beta)$ for some $0\le \alpha-\beta \le \tfrac12$, with Lipschitz constant $C_1$.
A standard existence theorem then states that if, for some $n$,
\begin{equation}
\lambda_{n+1}-\lambda_n
>
2C_1\bigl(\lambda_n^{\alpha-\beta}+\lambda_{n+1}^{\alpha-\beta}\bigr),
\label{eq:spectral_gap}
\end{equation}
there exists an inertial manifold $\mathcal M$ of dimension $n$ \cite{robinson1996asymptotic}.
Moreover, $\mathcal M$ can be realized as the graph of a Lipschitz function
$\phi:P_nH_0\to Q_nH_0\cap D(A^\alpha)$ and attracts all trajectories exponentially.

Under this condition, inertial manifolds can be rigorously constructed
for several classes of dissipative PDEs, including some reaction--diffusion systems, viscous Burgers equation, and
Kuramoto--Siva\-shinsky equation.

\subsubsection{Reduced dynamics on the manifold}

Under the spectral-gap condition above, the inertial manifold has the graph form
\[
\mathcal{M}=\{p+\phi(p):\, p\in P_nH_0\},
\]
where $\phi:P_nH\to Q_nH_0\cap D(A^\alpha)$ is Lipschitz. Therefore, for each
$u\in\mathcal{M}$, the decomposition
\[
u=p+\phi(p),\qquad p=P_nu,
\]
is uniquely determined by the low-mode coordinate $p$. Applying $P_n$ to the full equation
$\partial_t u + Au = f(u)$ and substituting $u=p+\phi(p)$ gives the closed reduced system
on $P_nH_0$:
\begin{equation}\label{eq:reduced}
\frac{dp}{dt} + A p = P_n f\big(p+\phi(p)\big).
\end{equation}
Solutions of \eqref{eq:reduced} parameterize trajectories on $\mathcal{M}$, thus, once a trajectory has been restricted to the inertial manifold, its evolution is governed by a closed finite-dimensional ODE.

\subsubsection{Asymptotic completeness}
The defining properties of an inertial manifold give a distance-to-set form
of attraction: invariance and exponential attraction imply that every
trajectory approaches the manifold at a uniform exponential rate.
However, these properties alone do not specify how individual trajectories
approach the manifold, nor whether their long-time behavior can be related
to a single trajectory evolving on it.

Asymptotic completeness provides a refinement of exponential attraction by
upgrading it from a setwise statement to an orbitwise one.
Rather than asserting only that trajectories converge toward the manifold,
it guarantees that each trajectory in the full phase space is exponentially
tracked by a trajectory lying on the manifold \cite{langa1999determining,robinson1996asymptotic}.

\begin{definition}[Asymptotic completeness]
Let $\mathcal{M}\subset H$ be a positively invariant attracting set for the
semigroup $\{S(t)\}_{t\ge 0}$.
The set $\mathcal{M}$ is said to be \emph{asymptotically complete} if for every
initial condition $u\in H$ there exists a point $u^M\in\mathcal{M}$ such that
\begin{equation}
\|S(t)u - S(t)u^M\| \le C(u)\, e^{-\mu t},
\quad \forall t\ge 0,
\end{equation}
for some constant $\mu>0$.
\end{definition}

For such a $u^M$, the trajectory $S(t)u^M$ is referred to as the
\emph{tracking trajectory} associated with $u$. We will also refer to
$u^M$ as the \emph{asymptotic phase} of $u$.
Asymptotic completeness implies that every trajectory is
exponentially tracked by a corresponding tracking trajectory on $\mathcal{M}$ and hence
shares the same asymptotic behavior. Figure~\ref{fig:im_vis} schematically illustrates this idea.

\begin{figure}[H]
\centering
\includegraphics[width=0.72\textwidth]{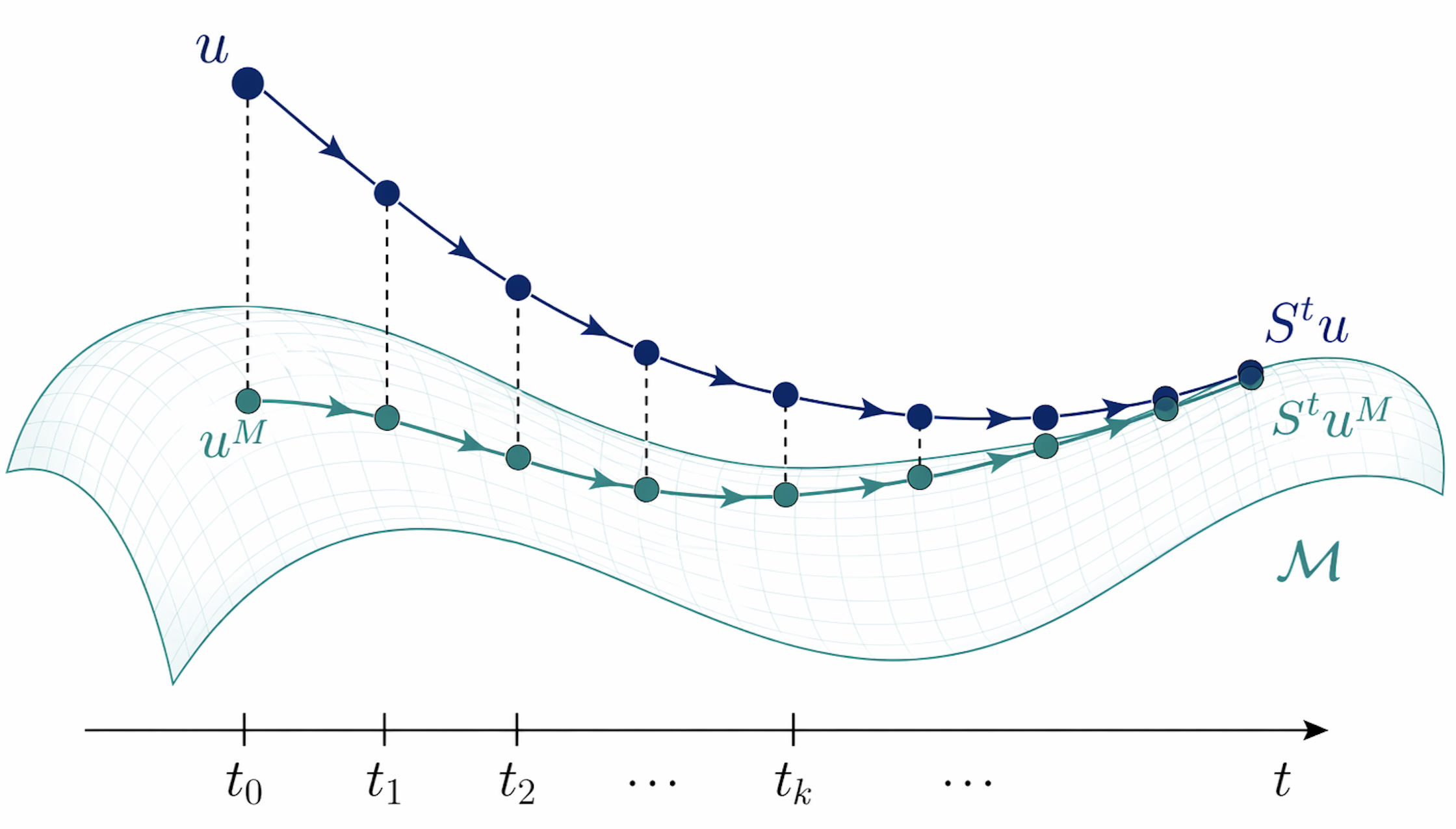}
\caption{Schematic of a trajectory in the full space and its tracking trajectory on the inertial manifold.}
\label{fig:im_vis}
\end{figure}

A sufficient condition for asymptotic completeness is
\emph{flow-normal hyperbolicity} \cite{robinson1996asymptotic,rosa1996inertial}.

\begin{definition}[Flow-normal hyperbolicity]
Let $\mathcal{M}\subset H$ be a positively invariant attracting manifold for
the semigroup $\{S(t)\}_{t\ge 0}$. Suppose that $\mathcal{M}$ attracts
trajectories at an exponential rate $\mu>0$, namely
\begin{equation}
\operatorname{dist}(S(t)u,\mathcal{M})
\le
Ce^{-\mu t},
\quad \forall t\ge 0.
\end{equation}
The manifold $\mathcal{M}$ is said to be \emph{flow-normally hyperbolic} if there
exist $\gamma<\mu$ such that
\begin{equation}
\|u_1(t)-u_2(t)\|
\le
C e^{-\gamma t}\|u_1(0)-u_2(0)\|, \quad \forall t\le 0.
\end{equation}
for two trajectories $u_1(t),u_2(t)$ lying on $\mathcal{M}$, 
\end{definition}

In other words, flow-normal hyperbolicity requires that the backward-in-time
separation of trajectories on $\mathcal{M}$ is dominated by the exponential
attraction toward $\mathcal{M}$. Such a domination property is not automatic from the existence of an inertial manifold and must be verified under additional assumptions in specific settings.

Flow-normal hyperbolicity implies asymptotic completeness \cite{robinson1996asymptotic}. Moreover, it also implies the uniqueness of the asymptotic phase
$u^M$. Indeed, if $u^M_1,u^M_2\in\mathcal{M}$ both track the same trajectory
$S(t)u$ at rate $\mu$, then
\[
\|S(t)u^M_1-S(t)u^M_2\|
\lesssim e^{-\mu t}.
\]
Applying the backward-time estimate on $\mathcal{M}$ gives
\[
\|u^M_1-u^M_2\|
\lesssim
e^{\gamma t}\|S(t)u^M_1-S(t)u^M_2\|
\lesssim
e^{-(\mu-\gamma)t},
\]
and letting $t\to\infty$ yields $u^M_1=u^M_2$. Thus the tracking trajectory
associated with each initial condition is uniquely determined.

The uniqueness of the asymptotic phase naturally induces an asymptotic phase map
\[
\Pi:H\to\mathcal{M},
\]
which assigns to each initial condition $(u\in H)$ its unique asymptotic phase
$(\Pi(u)=u^M\in\mathcal{M})$. Under suitable additional spectral-gap and
regularity assumptions, one can further obtain the continuity of this map \cite{chow1992smoothness,rosa1996inertial}.
When this holds, $\Pi$ may be viewed as a continuous asymptotic phase
retraction from the full phase space onto the inertial manifold.

% Under the spectral gap condition proposed in \cite{rosa1996inertial}, one further obtains an
% stable-foliation structure. More precisely, the inertial manifold
% $\mathcal{M}$ admits a family of stable leaves
% $\{W^s(\xi)\}_{\xi\in\mathcal{M}}$ such that
% \begin{enumerate}
% \item $H = \bigsqcup_{\xi\in\mathcal{M}} W^s(\xi)$;
% \item $W^s(\xi)\cap\mathcal{M}=\{\xi\}$;
% \item if $u\in W^s(\xi)$, then
% \begin{equation}
% \|S(t)u - S(t)\xi\| \le C e^{-\beta t}, \qquad t\ge 0,
% \end{equation}
% for suitable $\beta>0$.
% \end{enumerate}

% Thus, each stable leaf $W^s(\xi)$ consists of all initial conditions whose
% trajectories are exponentially tracked by the same trajectory $S(t)\xi$ on
% $\mathcal{M}$.

% Accordingly, one may introduce an continuous asymptotic phase retraction
% \[
% \Pi:H\to\mathcal{M},
% \]
% which assigns to each $u\in H$ the unique point
% $\Pi(u)\in\mathcal{M}$ such that
% \[
% u \in W^s(\Pi(u)).
% \]
% So $\Pi(u)$ is a point on the manifold whose trajectory
% $S(t)\Pi(u)$ exponentially tracks the trajectory $S(t)u$. 

This naturally induces a decomposition of $u \in H$ into a
manifold component and an off-manifold residual:
\[
u^M := \Pi(u), \qquad
u^R := u - \Pi(u).
\]
Thus,
\[
u = u^M + u^R,
\]
where $u^M$ represents the asymptotic-phase representative on
$\mathcal M$, while $u^R$ represents the deviation of $u$ from manifold.

% In this formulation, the dynamics along $\mathcal{M}$ govern the slow,
% long-time evolution of the system, while deviations in the transverse
% directions decay exponentially.
% This provides a precise mathematical expression of the slaving principle,
% in which all remaining degrees of freedom are exponentially slaved to the
% finite-dimensional dynamics on the inertial manifold.

\section{Inertial Manifold Neural Operators (IMNO)}

The above inertial-manifold picture suggests an orbitwise reduction:
each full trajectory admits an asymptotic phase on a finite-dimensional,
positively invariant set, and the remaining degrees of freedom decay
exponentially. Motivated by this, we introduce a neural operator architecture that explicitly decomposes the state into
\begin{enumerate}
    \item[(i)] a low-dimensional \emph{manifold coordinate} $h(t)$,
    \item[(ii)] a function-valued \emph{residual} $u^R(t)$,
\end{enumerate}
with the goal of approximating both the reduced dynamics on the manifold and the off-manifold correction needed to recover the full state solution.

The overall structure of the Inertial Manifold Neural Operator (IMNO) is summarized in the flowchart below. In the following subsections, we describe each part in detail.

\begin{figure}[htbp]
    \centering
    \includegraphics[width=0.95\textwidth]{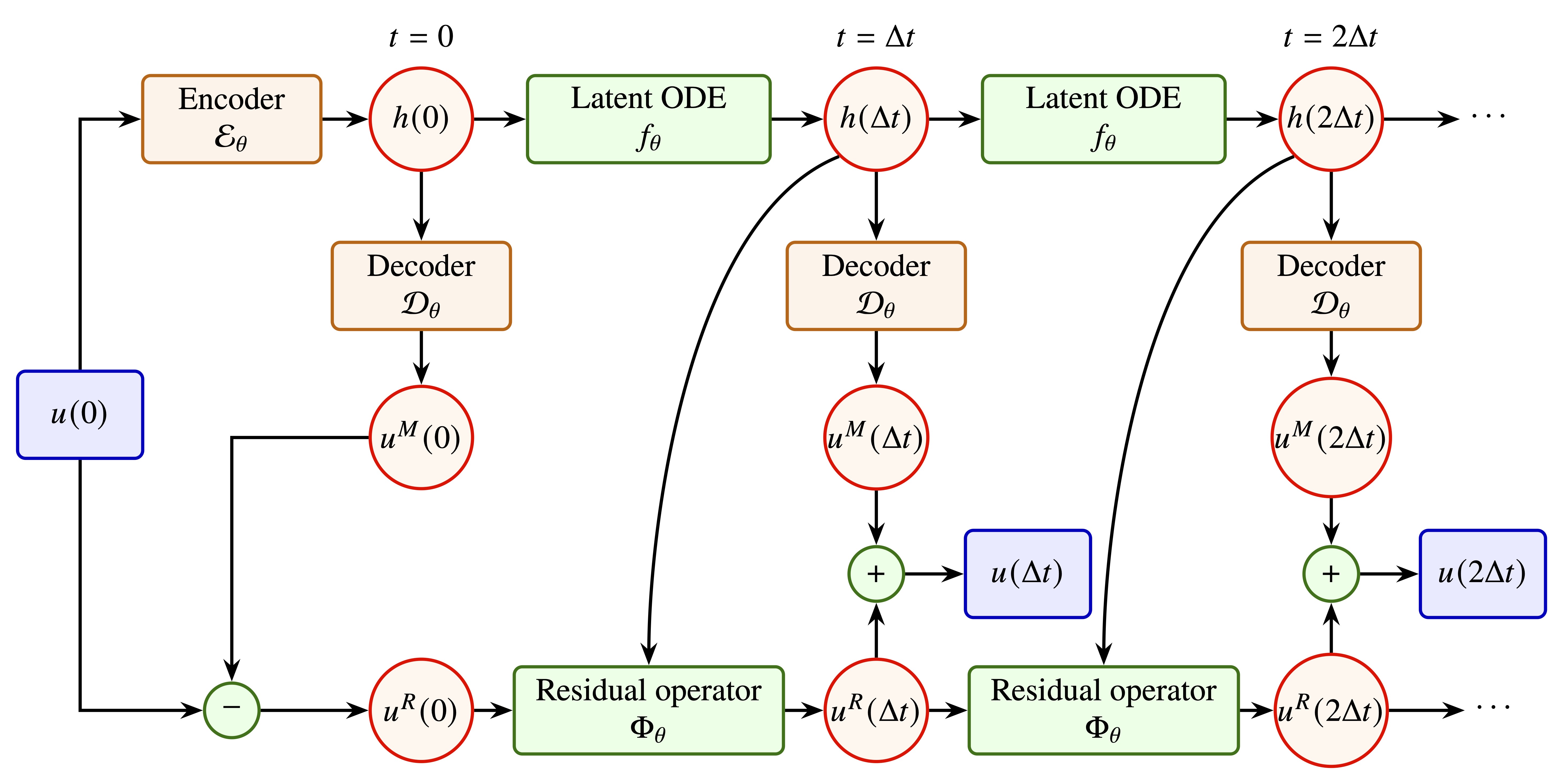}
    \caption{Overall architecture of IMNO.}
    \label{fig:imno-flowchart}
\end{figure}

\subsection{Manifold-Based Decomposition}

Let $u(t)\in H$ denote the solution of the PDE at time $t$.
IMNO is built on the assumption that the full state solution $u$ admits a decomposition
into a manifold component and a residual component,
\begin{equation}\label{eq:decomp}
u(0) \;=\; u^M(0) + u^R(0),
\end{equation}
where $u^M(t)$ lies on a learned manifold $\mathcal{M}_\theta\subset H$,
and $u^R(t)\in H$ represents the remaining degrees of freedom.

% The manifold component $u^M(t)$ is parameterized by a finite-dimensional
% intrinsic coordinate $h(t)\in\mathbb{R}^d$ through an embedding of the learned manifold decoder,
% \[
% u^M(t)=\mathcal{D}_\theta(h(t)).
% \]
Conceptually, $u^M(t)$ serves as a representative of the asymptotic phase,
while the full state $u(t)$ is viewed as lying on the stable leaf associated
with $u^M(t)$.
This decomposition is consistent with the inertial-manifold and
asymptotic-completeness perspective introduced in the previous section: the
long-time behavior is governed by evolution along the manifold, while
residual components decay rapidly.

To realize the decomposition \eqref{eq:decomp} in practice, we parameterize
the learned manifold $\mathcal{M}_\theta$ through an encoder--decoder pair
\[
\mathcal{E}_\theta:H\to \mathbb{R}^{d_h},
\qquad
\mathcal{D}_\theta:\mathbb{R}^{d_h}\to H,
\]
and define
\[
\mathcal{M}_\theta \;=\;\{\mathcal{D}_\theta(h): h\in\mathbb{R}^{d_h}\}.
\]
Both the encoder and decoder are designed to be resolution-independent,
they act on functions rather than fixed-dimensional vectors and remain
compatible with varying spatial resolutions.

% \paragraph{Manifold projection and state representation.}
Given an initial condition $u(0)\in H$, IMNO begins by projecting
$u(0)$ onto the learned manifold.
The encoder produces a latent coordinate
\[
h(0) = \mathcal{E}_\theta(u(0)),
\]
which serves as the coordinate induced by an embedding of the learned manifold into $\mathbb{R}^d$.
The corresponding manifold component is obtained by decoding,
\[
u^M(0) = \mathcal{D}_\theta(h(0)) \in \mathcal{M}_\theta,
\]
and the residual is defined as
\[
u^R(0) = u(0) - u^M(0).
\]
The state of the system $u(0)$ is thus represented by the pair $(h(0), u^R(0))$, where
$h(0)$ corresponding to the latent coordinate of $u$ on the learned manifold and $u^R(0)$ capturing its deviation from the manifold.

Starting from this representation, the latent variable $h(t)$ and the residual
$u^R(t)$ are evolved forward in time according to separate learned dynamics,
which are introduced in the following subsections.
At each step, the full state is reconstructed as
\[
u(t) = \mathcal{D}_\theta(h(t)) + u^R(t).
\]
% This separation between manifold evolution and residual correction reflects
% the inertial-manifold perspective we introduced above.

The encoder--decoder architecture is specified as follows.

\subsubsection*{Encoder Architecture}
The encoder $\mathcal{E}_\theta$ maps an input field $u\in H$ to a latent
coordinate $h\in\mathbb{R}^{d_h}$ by extracting global, low-frequency features.
The input field is first lifted to a higher-dimensional feature representation
through a pointwise linear layer that uses both the field value and the spatial
grid coordinate. The lifted feature field is then transformed to Fourier space,
and only a fixed number of low modes are retained. The real and imaginary parts
of these modes across all lifted channels are concatenated and projected into
$\mathbb{R}^{d_h}$ by a MLP, followed by a normalization layer.

\subsubsection*{Decoder Architecture}
The decoder $\mathcal{D}_\theta$ maps a latent coordinate $h\in\mathbb{R}^{d_h}$
back to a field in $H$ by generating low-frequency Fourier coefficients.
A MLP first maps $h$ to complex-valued coefficients
corresponding to a fixed set of Fourier modes.
These coefficients are then embedded into a Fourier spectrum and transformed back to physical space at a desired
spatial resolution via an inverse Fourier transform.
\subsection{Dynamics of the Manifold and Residual Components}
\subsubsection{Latent dynamics on the learned manifold}

The inertial-manifold theory indicates that the latent dynamics on the manifold can be written in the closed form \eqref{eq:reduced}
, the latent coordinate evolves autonomously and independently of the residual $u^R(t)$. Therefore, we model this closed dynamics using a Neural ODE \cite{chen2018neural}, where $f_\theta:\mathbb{R}^{d_h}\to\mathbb{R}^{d_h}$ is a learned vector field.

\begin{equation}\label{eq:latent_closed}
\dot h(t) = f_\theta(h(t)),
\end{equation}

In practice, we adopt a forward Euler discretization,
leading to the one-step update
\begin{equation}
h_{t+1} = h_t + \Delta t\, f_\theta(h_t),
\end{equation}
we seek $h(t)$ to capture the long-time dynamics of the system.

\subsubsection{Off-manifold residual dynamics}

The residual component $u^R(t)$ represents off-manifold corrections and is not expected to possess an autonomous, closed dynamics.
Instead, following the stable-foliation viewpoint from the inertial manifold theory,
the evolution of $u^R(t)$ also depends on the latent coordinate $h(t)$.

Specifically, we parameterize the dynamics of the residual component $u^R(t)$ by a neural operator of the form
% \begin{equation}
% u^{R}_{t+1} = \Phi_\theta\!\big(u^{R}_{t},h_{t+1}\big).
% \end{equation}
\begin{equation}
% u^{R}(t+\Delta t) = \Phi_\theta\!\big(u^{R}(t),h(t+\Delta t)\big).
u^R_{t+1} = \mathcal{R}_\theta(u^R_t, h_{t+1}).
\end{equation}
The operator $\mathcal{R}_\theta$ is a neural-operator,
mapping function-valued inputs to function-valued outputs, while being conditioned on the finite-dimensional latent coordinate $h$.

Building on the FNO-based neural-operator framework introduced in Section~\ref{sec:neural_operators}, we define $\mathcal{R}_\theta$ as a composition of lifting, hidden, and projection layers:
\[
\mathcal{R}_\theta(\cdot, h)
=
\mathcal{Q} \circ \mathcal{L}^{(h)}_L \circ \cdots \circ \mathcal{L}^{(h)}_1 \circ \mathcal{P}.
\]
Here $\mathcal{P}$ is a lifting operator from the residual channel space to an intermediate latent feature space, and $\mathcal{Q}$ is a projection operator that maps latent features back to the residual channel space. Following the notation in Section~\ref{sec:neural_operators}, these maps are written as
\[
\mathcal{P}:\mathcal{X}(\Omega;\mathbb{R}^{k})\to\mathcal{V}(\Omega;\mathbb{R}^{d_c}),
\qquad
u(x)\mapsto P\bigl(u(x),x\bigr),
\]
and
\[
\mathcal{Q}:\mathcal{V}(\Omega;\mathbb{R}^{d_c})\to\mathcal{X}(\Omega;\mathbb{R}^{k}),
\qquad
v(x)\mapsto Q(v(x)),
\]
where $P$ is a pointwise MLP from $\mathbb{R}^{k+d}$ to $\mathbb{R}^{d_c}$, $Q$ is a pointwise MLP from $\mathbb{R}^{d_c}$ to $\mathbb{R}^{k}$.

For a standard FNO, the hidden-layer update takes the form
\[
\mathcal{L}_\ell(v)(x)=\sigma\!\bigl(W_\ell v(x)+b_\ell+(\mathcal{K}_\ell v)(x)\bigr),
\]
where $W_\ell$ is a pointwise linear transformation and $\mathcal{K}_\ell$ is a spectral convolution operator. In our model, we take the kernel in $\mathcal{K}_\ell$ to be the same as in FNO, namely
\[
({\mathcal K}_\ell v)(x)
  =
  {\mathcal F}^{-1}\!\left(
       R_{\ell}(k)\,\widehat v(k)
  \right)(x).
\]

Furthermore, we modify this basic layer by replacing the fixed bias term with a latent coordinate-dependent. More precisely, at layer $\ell$, the update in the layer is 
\[
\mathcal{L}^{(h)}_\ell(v)(x)=\sigma\!\bigl(W_\ell v(x)+\beta_\ell(h)+(\mathcal{K}_\ell v)(x)\bigr),
\]

Here $\beta_\ell$ is a learnable affine map from $\mathbb{R}^{d_h}$ to $\mathbb{R}^{d_c}$, so $\beta_\ell(h_{t+1})$ is a learned vector in the feature-channel dimension obtained from the latent state $h_{t+1}$ and broadcast uniformly over the spatial domain. In this way, the residual dynamics are not modeled by a fixed operator acting on $u^R_t$ alone; instead, the operator is conditioned on the current location on the learned manifold through an additive correction.

Therefore, $\mathcal{R}_\theta$ may be interpreted as a conditioned neural operator for the residual dynamics: it updates the residual field through an operator
whose action depends on the current latent coordinate. This is consistent with
the inertial-manifold viewpoint, where the residual degrees of freedom are
not evolved independently but are coupled to the low-dimensional manifold
dynamics.

\begin{remark}
One may wonder whether the latent-dependent term \(\beta_\ell(h_{t+1})\), being
spatially constant, is too limited to inject sufficiently rich information from
the latent coordinate into the residual dynamics. From the viewpoint of
expressive power, this limitation is not fundamental. Universal approximation
results for the averaging neural operator (ANO) show that even remarkably simple nonlocal mechanisms can suffice for universal operator approximation
\cite{lanthaler2025nonlocality}.

Although $\beta_\ell(h_{t+1})$ does not depend on the spatial coordinate, it enters every hidden layer and interacts with the pointwise nonlinearities and the nonlocal Fourier convolution operators. In this sense, it provides a global mechanism through which the latent coordinate can influence the residual evolution operator. At the level of approximation theory, this mechanism is sufficient to establish a corresponding universal approximation theorem for the model considered here.

\end{remark}

% \begin{theorem}[Universal approximation of IMNO on systems with inertial manifold structure]
% \label{thm:ua}
% (Todo) Let $H$ be a Banach space and let $\mathcal{G}:H\to H$ be the one-step solution operator
% of a dissipative PDE. Assume that:

% \begin{enumerate}
%     \item the PDE admits an asymptotically complete inertial manifold $\mathcal M\subset H$
%     of dimension $m$;
%     \item the associated asymptotic phase map $\Pi:H\to\mathcal M$ exists and is continuous on a compact set
%     $K\subset H$, and is compatible with the one-step flow:
%     \[
%     \Pi\circ \mathcal{G} = \mathcal{G}|_{\mathcal M}\circ \Pi
%     \quad \text{on } K.
%     \]
%     \item The residual one-step map
%     \[
%     \mathcal R(u^R,h)
%     =
%     \mathcal{G}(D(h)+u^R)
%     -
%     \Pi(\mathcal{G}(D(h)+u^R))
%     \]
%     is continuous on the compact set induced by \(K\).
% \end{enumerate}

% Then for every $\varepsilon>0$, there exists an IMNO model such that for every $u\in K$,
% if
% \[
% u=\underbrace{u^M}_{\Pi(u)}+\underbrace{u^R}_{u-\Pi(u)},
% \qquad
% \mathcal{G}(u)=u^{M,ref}_{next} + u^{R,ref}_{next},
% \]
% and
% \[
% \hat G_\theta(u)=u^{M}_{next} + u^{R}_{next}
% \]
% is the corresponding one-step IMNO prediction, one has
% \[
% \sup_{u\in K}\Bigl(
% \|u^{M}_{next} - u^{M,ref}_{next}\|_H + \|u^{R}_{next} - u^{R,ref}_{next}\|_H
% \Bigr)<\varepsilon.
% \]
% In particular,
% \[
% \sup_{u\in K}\| G_\theta(u)-\mathcal{G}(u)\|_H<\varepsilon.
% \]
% \end{theorem}

\begin{theorem}[Universal approximation of IMNO on systems with inertial manifold structure]
\label{thm:imno-uat}
Let $H=H^s(\mathbb{T}^d;\mathbb{R}^c)$ for some $s\ge 0$, and let
$$
\mathcal{G}=S(\Delta t):H\to H
$$
be the continuous one-step solution operator of a dissipative PDE. Let $K\subset H$ be compact. Assume that:
\begin{enumerate}

    \item the PDE admits a finite-dimensional flow-normally hyperbolic inertial manifold
    $\mathcal M\subset H$;

    \item the associated asymptotic phase map
    $\Pi:H\to \mathcal M$ is continuous.
\end{enumerate}
Then, for every $\varepsilon>0$, there exists an IMNO model $\mathcal{G}_\theta:H\to H$
such that for every $u\in K$, if
\[
    u = u^M + u^R,
    \qquad
    u^M := \Pi(u),
    \qquad
    u^R := u-\Pi(u),
\]
and
\[
    \mathcal{G}(u)
    =
    u^{M,\mathrm{ref}}_{\mathrm{next}}
    +
    u^{R,\mathrm{ref}}_{\mathrm{next}},
    \qquad
    u^{M,\mathrm{ref}}_{\mathrm{next}}
    :=
    \Pi(\mathcal{G}(u)),
    \qquad
    u^{R,\mathrm{ref}}_{\mathrm{next}}
    :=
    \mathcal{G}(u)-\Pi(\mathcal{G}(u)),
\]
while the corresponding IMNO prediction is written as
\[
    \mathcal{G}_\theta(u)
    =
    u^M_{\mathrm{next}}
    +
    u^R_{\mathrm{next}},
\]
one has
\[
    \sup_{u\in K}
    \left(
    \|u^M_{\mathrm{next}}-u^{M,\mathrm{ref}}_{\mathrm{next}}\|_H
    +
    \|u^R_{\mathrm{next}}-u^{R,\mathrm{ref}}_{\mathrm{next}}\|_H
    \right)
    < \varepsilon .
\]
In particular,
\[
    \sup_{u\in K}
    \|\mathcal{G}_\theta(u)-\mathcal{G}(u)\|_H
    < \varepsilon .
\]
\end{theorem}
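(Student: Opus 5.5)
We approximate the two components of $\mathcal{G}(u)$ separately. Using the structure of the IMNO, we choose the encoder, latent ODE step and decoder to approximate $u\mapsto \Pi(\mathcal{G}(u))$. We then choose the residual operator $\mathcal{R}_\theta$ to approximate $u\mapsto \mathcal{G}(u)-\Pi(\mathcal{G}(u))$. The triangle inequality then gives the final bound.

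\emph{Step 1: reduction to continuous maps on compact sets.} Since $\mathcal{G}$ and $\Pi$ are continuous and $K$ is compact, the following sets are compact:
\begin{itemize}
\item $K^M:=\Pi(K)\subset\mathcal M$,
\item $K^R:=(I-\Pi)(K)$,
\item $\Pi(\mathcal{G}(K))$ and $(I-\Pi)(\mathcal{G}(K))$.
\end{itemize}
Because $\mathcal M$ is a finite-dimensional Lipschitz manifold, it admits a chart-free finite-dimensional coordinatization. Concretely, the compact set $\Pi(\mathcal{G}(K))\cup K^M$ lies in a finite-dimensional Lipschitz manifold. By a Whitney/Takens-type embedding, or more simply by the graph representation $u=p+\phi(p)$ with $p=P_n u$ from the spectral-gap discussion, there are a continuous coordinate map $e:\mathcal M\to\mathbb{R}^{m}$ and a continuous inverse $d:e(\mathcal M)\to\mathcal M$. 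In the graph case, $e=P_n$ and $d(p)=p+\phi(p)$.

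\emph{Step 2: the manifold branch.} By flow invariance of $\mathcal M$ (positive invariance plus uniqueness of the asymptotic phase), $\Pi\circ\mathcal{G}=\mathcal{G}\circ\Pi$. Indeed, $\mathcal{G}(\Pi u)\in\mathcal M$ tracks $S(t)\mathcal{G}(u)$ exponentially, so uniqueness forces equality. Hence
\[
\Pi(\mathcal{G}(u))=d\bigl(F(e(\Pi u))\bigr),\qquad F:=e\circ\mathcal{G}\circ d .
\]
Here $F$ is continuous on a compact subset of $\mathbb{R}^m$. We take $d_h=m$ and proceed as follows.
\begin{itemize}
\item Approximate $e\circ\Pi$ on $K$ by the encoder $\mathcal{E}_\theta$. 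This map is continuous from a compact subset of $H^s$ to $\mathbb{R}^m$. Fourier truncation to finitely many modes followed by an MLP is universal here: truncation $P_N$ converges uniformly on compacts, and the MLP approximates a continuous function on the finite-dimensional compact image. The encoder's normalization layer can be absorbed or made near-identity on the bounded range.
\item Approximate $h\mapsto (F(h)-h)/\Delta t$ by the MLP $f_\theta$.
\item Approximate $d$ by the decoder $\mathcal{D}_\theta$. This uses uniform approximation of $d$ by $P_N d$ in $H^s$, then an MLP producing the finitely many Fourier coefficients.
\end{itemize}
Uniform continuity of each map on slightly enlarged compact neighborhoods lets us propagate errors through the composition, so $\sup_K\|u^M_{\mathrm{next}}-u^{M,\mathrm{ref}}_{\mathrm{next}}\|_H<\varepsilon/2$.

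\emph{Step 3: the residual branch, which is the main obstacle.} The IMNO computes $u^R_{\mathrm{next}}=\mathcal{R}_\theta(\tilde u^R,h_{\mathrm{next}})$, where $\tilde u^R=u-\mathcal{D}_\theta(\mathcal{E}_\theta u)$ is close to $u-\Pi u$ and $h_{\mathrm{next}}$ is close to $e(\Pi\mathcal{G}u)$. The target is
\[
u\mapsto \mathcal{G}(u)-\Pi(\mathcal{G}(u)).
\]
The difficulty is that $\mathcal{R}_\theta$ sees only $(u^R,h_{\mathrm{next}})$, not $u$ itself. We therefore show that $u$ is a continuous function of these inputs on the relevant compact set. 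Write $\Pi u=\mathcal{G}|_{\mathcal M}^{-1}(d(h_{\mathrm{next}}))$. The backward flow on $\mathcal M$ is well defined, and by flow-normal hyperbolicity it is Lipschitz with constant $Ce^{\gamma\Delta t}$. Then $u=u^R+\Pi u$, which makes the target a continuous operator $\Psi(u^R,h)$ on a compact subset of $H\times\mathbb{R}^m$.

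Next we invoke a universal approximation theorem for FNO-type operators with an additional finite-dimensional parameter. We follow the ANO/FNO universality of Kovachki–Lanthaler–Mishra and \cite{lanthaler2025nonlocality}. First, encode $h$ through the spatially constant biases $\beta_\ell(h)$: the first hidden layer can carry $h$ in extra channels as a constant function. Second, the remaining layers approximate $\Psi$ as a continuous operator of the augmented input $(u^R, h\cdot\mathbf 1)$. The points to check carefully are:
\begin{itemize}
\item the extra channels really do reproduce $h$ exactly (affine $\beta_1$ plus a near-identity activation regime);
\item stability of $\mathcal{R}_\theta$ under the perturbed inputs. We handle this by approximating a uniformly continuous extension of $\Psi$ on a compact neighborhood, using Dugundji extension.
\end{itemize}
This gives an error below $\varepsilon/2$. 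Summing the two bounds yields the component estimate, and the triangle inequality gives $\sup_K\|\mathcal{G}_\theta(u)-\mathcal{G}(u)\|_H<\varepsilon$.
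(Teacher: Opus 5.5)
Your decomposition is the paper's. You use the commutation $\Pi\circ\mathcal G=\mathcal G\circ\Pi$ from uniqueness of the asymptotic phase. You use a latent chart in which the manifold branch is encoder, Euler step and decoder. You invert $\mathcal G|_{\mathcal M}$ to write the next residual as a continuous function $\Psi(u^R,h_{\mathrm{next}})$. Your Lipschitz bound $Ce^{\gamma\Delta t}$ for that inverse, read off from the backward estimate, is a valid and slightly more direct replacement for the paper's compact-to-Hausdorff argument for $F^{-1}$.

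\textbf{Main gap: the residual step fails as written.} You propose approximating an extension of $\Psi$ ``on a compact neighbourhood'' of the exact inputs. No such set exists. $H$ is infinite-dimensional, so no point of $H\times\mathbb R^{d_h}$ has a compact neighbourhood, and FNO universality needs a compact input set. Relatedly, your Step~2 tunes the encoder, latent step and decoder only to make the manifold error $<\varepsilon/2$. They must also place the residual operator's inputs close to the exact ones, at a scale set by the residual target.

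The paper resolves both problems by ordering the choices:
\begin{enumerate}
\item Extend $\Psi$ continuously (Dugundji) to $\bar\Psi$. Use uniform continuity relative to the compact set $C_K'=\{(u-\Pi u,\,e(\Pi\mathcal G u)):u\in K\}$ (Lemma~\ref{lem:ruc}) to get a tolerance $\delta_T$. No uniformly continuous extension is needed.
\item Let $\hat h_0=\mathcal E_\theta(u)$, let $\hat h_1$ be its Euler step, and let $\hat u^R=u-\mathcal D_\theta(\hat h_0)$. Choose the encoder, $f_\theta$ and decoder so that the manifold error is $<\varepsilon/2$ and also $\|\hat u^R-(u-\Pi u)\|_H+\|\hat h_1-h_1\|<\delta_T$. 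This requires decoder accuracy at $\hat h_0$ as well as at $\hat h_1$.
\item Only after these components are frozen is $S_\theta=\{(\hat u^R(u),\hat h_1(u)):u\in K\}$ compact, as a continuous image of $K$. Approximate $\bar\Psi$ on $S_\theta$ itself, and pass to the exact inputs via continuity of the fixed target $\bar\Psi$.
\end{enumerate}

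\textbf{Smaller issues.}

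(i) Nothing in the hypotheses gives the graph form $u=p+\phi(p)$. A general $m$-dimensional Lipschitz manifold, or a compact piece of one, need not embed in $\mathbb R^m$, so $d_h=m$ is unjustified. The paper applies Menger--N\"obeling to the compact set $\Pi(K)\cup\mathcal G(\Pi(K))$ with $d_h\ge 2m+1$. You also need continuous extensions of $d$ and of $(F-\mathrm{id})/\Delta t$ to all of $\mathbb R^{d_h}$, because the learned latents leave $e(\mathcal M)$.

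(ii) Carrying $h$ ``exactly'' through ``a near-identity activation regime'' is self-contradictory. It would also force you to push $u^R$ approximately through a nonlinear layer. The paper gets exactness with no such regime. In the augmented FNO, $h$ enters only through the first affine map of the lifting MLP, $A_{1,\mathrm{loc}}(v(x),x)+A_{1,h}h+c_1$. The paper therefore:
\begin{enumerate}
\item puts $A_{1,h}h+c_1$ into $\beta(h)$ of a preparation layer with $\mathcal K=0$, keeping $(v(x),x)$ in auxiliary channels;
\item emulates the remaining lifting layers the same way;
\item absorbs the last affine lifting map into the first Fourier layer.
\end{enumerate}
This gives $\mathcal R_\theta(v,h)=\mathcal N(v,\mathbf 1\otimes h)$ exactly (Lemma~\ref{lem:cond_fno}).
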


The proof is deferred to Appendix A.

\begin{remark}
The universal approximation result above only requires the additive conditioning
mechanism described in the main architecture. This shows that, at the level of
expressive power, additional conditioning mechanism is not necessary. However, empirically, we find it beneficial to enrich the conditioning by
adding a channel-wise multiplicative modulation. More precisely, let
\(\gamma_\ell\) be another learnable MLP mapping \(\mathbb{R}^{d_h}\) to
\(\mathbb{R}^{d_c}\). We then define
\[
\mathcal{L}_\ell(v)(x)
=
\sigma_\ell\!\Bigl(
\bigl(1+\tanh(\gamma_\ell(h_{t+1}))\bigr)\odot
\bigl(
W_\ell v(x)+(\mathcal{K}_\ell v)(x)
\bigr)
+
\beta_\ell(h_{t+1})
\Bigr).
\]
Here \(\odot\) denotes channel-wise multiplication, and both
\(\gamma_\ell(h_{t+1})\) and \(\beta_\ell(h_{t+1})\) are broadcast uniformly over
the spatial domain. In practice, this modification adds only a small number of
parameters while improving prediction accuracy across many test settings. Therefore, we adopt this mechanism in all
numerical experiments below.
\end{remark}

% Concretely, $\Phi_\theta$ follows the latent fourier neural operator (FLNO) paradigm:
% the residual field $u_{R,t}$ is first lifted to a higher-dimensional feature space,
% then processed through a sequence of nonlocal operator layers that act in
% Fourier space, and finally projected back to the physical space.
% The latent coordinate $h_{t+1}$ is injected into the operator layers as a
% global conditioning variable, allowing the residual dynamics to adapt to the
% current location on the learned manifold.

% This structure enables $\Phi_\theta$ to capture nonlocal interactions in the
% residual field while remaining explicitly conditioned on the slow manifold
% dynamics.
% From the perspective of inertial-manifold theory, $\Phi_\theta$ provides a
% data-driven approximation of the transverse adjustment along stable leaves,
% with the residual evolving in response to changes in the manifold
% state.

\subsection{Loss Design}\label{sec:loss}
Our loss is designed to encode the inertial-manifold tracking property
discussed above. When the dynamics admit an inertial manifold and are
asymptotically complete, each trajectory is exponentially tracked by a
(unique) trajectory on the manifold, and the residual decays
exponentially fast in time. In the notation of \eqref{eq:decomp}, this means
that the residual $u^R(t) = u(t) - u^M(t)$ should become smaller and smaller as
$t$ increases. Consequently, for sufficiently large $t$, the manifold component $u^M(t)$
is expected to be close to the reference solution $ u^{ref}(t)$.

To encourage this behavior during training, we augment the standard one-term
prediction loss with an auxiliary term that penalizes the mismatch between the
manifold component $u^M(t)$ and the reference solution $u^{ref}(t)$, and we assign
this term an exponentially increasing weight over the rollout time. So the loss function $\mathcal L$ is defined as:
% \begin{equation}\label{eq:loss_total}
% \mathcal L
% =
% \sum_{t=t_0+1}^{t_{\mathrm{train}}}\!\!\|\hat u(t) - u(t)\|_2^2
% \;+\;
% \alpha \sum_{t=t_0+1}^{t_{\mathrm{train}}}\!\! w_t\,\|\hat u(t) - u^M(t)\|_2^2,
% \end{equation}
\begin{equation}\label{eq:loss_total}
\mathcal L
=
\sum_{t=t_0+1}^{t_{\mathrm{train}}}\!\!\| u^{ref}_t - u_t\|_2^2
\;+\;
\alpha \sum_{t=t_0+1}^{t_{\mathrm{train}}}\!\! w_t\,\| u^{ref}_t - u^M_t\|_2^2,
\end{equation}
with
\begin{equation}\label{eq:time_weight}
w_t
=
\frac{\min\{\gamma^{\,t-(t_0+1)},\, w_{\max}\}}{w_{\max}} \in (0,1].
\end{equation}

The weight \(w_t\) is chosen to increase exponentially over time, so as to place greater emphasis on the auxiliary term at later rollout steps and reflect the expected decrease of the mismatch \(\| u^{ref}(t)-u^M(t)\|_2\); the cap \(w_{\max}\) prevents the weight from growing without bound.

\section{Shift-Equivariant Inertial Manifold Neural Operator}

\subsection{Translational symmetry in PDEs}

Many PDEs posed on homogeneous domains with periodic boundary conditions (or on $\mathbb{R}^d$) are equivariant under spatial translations, or shifts. In other words, if the initial condition is shifted in space, then the corresponding solution is shifted by exactly the same amount. We now formalize this symmetry structure rigorously:

Let $T_s$ denote the translation operator acting on functions by
\begin{equation}
(T_s u)(x) = u(x+s), \qquad s \in \mathbb{R}.
\end{equation}

If the governing PDE is translation-invariant, its solution operator (flow map) $S(t):H\to H$ is defined by
\[
S(t)u_0 := u(t;u_0),
\]
where $u(t;u_0)$ is the solution at time $t$ with initial data $u_0$. Then, it satisfies
\begin{equation}
S(t) \circ T_s = T_s \circ S(t),
\end{equation}
for all admissible shifts $s$.

Consequently, the global attractor of the system contains entire group orbits under the action of translations. For any trajectory $u(t)$ lying on the attractor, all of its translated versions $T_s u(t)$ also lie on the attractor.

% From the perspective of reduced-order modeling, translation symmetry introduces a meaningful prior structure of the system. States that differ only by a spatial shift are dynamically equivalent, yet they are represented as distinct points in the original function space. This redundancy complicates the learning of a low-dimensional parameterization. To construct a more compact representation, it is natural to separate spatial shift from the original dynamics.

From the perspective of reduced-order modeling, translation symmetry introduces a meaningful prior structure of the system. States that differ only by a spatial shift are dynamically equivalent, yet they are represented as distinct points in the original function space. This redundancy complicates the learning of a low-dimensional parameterization. So, separating group motion from intrinsic dynamics can lead to more compact and physically meaningful reduced representations~\cite{mendible2020dimensionality, rowley2000reconstruction,shuai2025symmetry}. Motivated by this idea, we separate the spatial shift from the intrinsic dynamics to construct a more compact model.

\begin{remark}
Standard FNO can naturally preserve translation equivariance when no explicit
coordinate embedding is used. This is because spectral convolutions and pointwise
channel mixing commute with spatial translations. For PDEs with translation symmetry, an FNO without coordinate embedding naturally preserves translation equivariance and often achieves better accuracy. However, the original IMNO does not automatically preserve this
symmetry, since the latent coordinate may entangle intrinsic shape
information with spatial phase. This motivates the shift-equivariant IMNO
developed below.
\end{remark}

\subsection{Symmetry reduction via phase alignment}

To factor out translation symmetry, we fix the phase of a reference Fourier mode and use it to define a canonical representative for each solution $u \in H$.

Let $u(x,t)$ be a periodic solution on $[0,L]$. Its canonical representative $\tilde{u}(x,t)$ is defined by
\begin{equation}
\tilde{u}(x,t) = T_{-{\frac{L\phi(t)}{2\pi}}} u(x,t),
\end{equation}
where the phase $\phi(t)$ is chosen so that a prescribed Fourier mode (e.g., the first mode) satisfies a fixed phase condition. This gives the decomposition
\begin{equation}
u(x,t) = T_{{\frac{L\phi(t)}{2\pi}}} \tilde{u}(x,t),
\end{equation}
where:
\begin{itemize}
    \item $\tilde{u}(x,t)$ represents the intrinsic shape dynamics in a canonical frame,
    \item $\phi(t)$ captures the spatial translation along the symmetry direction.
\end{itemize}

We can make the quotient–phase structure explicit. Denote $\mathcal{T}_{\theta}=T_{\frac{L\theta}{2\pi}}$. The translation group is
\[
G=\{\mathcal{T}_{\theta}:\theta\in[0,2\pi)\}\cong \mathbb{S}^1,
\]
acting on $H$. If
\[
u(x)=\sum_{n\in\mathbb{Z}} \hat u_n e^{in\frac{2\pi x}{L}},
\]
then
\[
(\mathcal{T}_{\theta} u)(x)=\sum_{n\in\mathbb{Z}} e^{in\theta}\hat u_n e^{in\frac{2\pi x}{L}},
\qquad \theta\in[0,2\pi).
\]
% In this case, $H$ can be written as a direct sum of the group direction and the quotient space, i.e.,
% \[
% H \cong G \oplus H/G.
% \]
% The quotient space is
% \[
% H/G=\{[u]:u\in H,\ \hat u_1\in\mathbb{R}\}.
% \]

Therefore, if the first Fourier coefficient of the state is nonzero, after fixing a phase condition, the state can be represented by a
translation phase together with a quotient representative, i.e.,
\[
H_* \cong \mathbb{S}^1 \times \Sigma,
\qquad \Sigma \cong H_*/G,
\]
where \(H_*=\{u\in H:\widehat u_1\neq 0\}\) and \(\Sigma=\{u\in H:\widehat u_1\in \mathbb{R^+}\}\).

Define the phase functional $\Theta:H_* \to \mathbb{S}^1$ by
\[
\Theta(u)=\arg(\hat u_1)=:\theta_0.
\]
Then the canonical projection operator $\pi:H_* \to H/G$ is
\[
\pi(u)=\tilde u=\mathcal{T}_{-\Theta(u)}u.
\]
Then, the reduced dynamics of $\tilde u$ evolves on $H/G$, while the phase $\phi(t)=\Theta(u(x,t))\in\mathbb{S}^1$ tracks the spatial translation phase.

In this quotient space, the inertial-manifold structure of the original dynamics is preserved under the symmetry reduction. If the governing equation is shift equivariant and a $G$-invariant inertial manifold $\mathcal{M}\subset H^*$ exists for the original dynamics, then an inertial manifold for the induced dynamics on $H/G$ is given by its projection
\[
\mathcal{M}_{H/G} := \pi(\mathcal{M}) \subset H/G.
\]

\subsection{Inertial manifold decomposition}

The classical IMNO formulation models the solution as
\begin{equation}
u(t) = \mathcal{D}_\theta(h(t)) + u^R(t),
\end{equation}
where $D_\theta$ parameterizes an approximate inertial manifold $\mathcal{M_\theta}$, $h(t)$ denotes the latent coordinate, and $u^R(t)$ represents the residual component. 

To leverage translation symmetry, we use $D_\theta$ to parameterize the inertial manifold $\mathcal{M}_{H/G}$ in the quotient space, with $h$ as the corresponding latent coordinate. We then extend this decomposition to an equivariant form:
\begin{equation}
u(t) = \mathcal{T}_{\phi(t)} \big( \mathcal{D}_\theta(h(t)) \big) +  u^R(t),
\end{equation}
where:
\begin{itemize}
    \item $h(t) \in \mathbb{R}^{d_h}$ is the latent coordinate that parameterizes the manifold,
    \item $\phi(t) \in \mathbb{R}$ is the spatial translation phase of $u(t)$,
    \item $ u^R(t)$ represents the residual component.
\end{itemize}

So, given an initial condition $u(0)$, we first apply phase alignment to decompose it into a canonical representative and a phase,
\[
u(0) = T_{\phi(0)} \tilde{u}(0),
\]
where $\tilde{u}(0)$ represents the canonical representative, and $\phi(0)$ records the spatial shift. The encoder is then applied to $\tilde{u}$ to obtain the initial latent coordinate,
\[
h(0) = \mathcal{E}_\theta(\tilde{u}(0)).
\]

Then, the latent dynamics is modeled as
\begin{equation}
\dot{h} = f_\theta(h),
\end{equation}
while the phase evolution is governed by
\begin{equation}
\dot{\phi} = g_\theta(h),
\end{equation}
where both $f_\theta$ and $g_\theta$ are parameterized by neural networks.

This construction can be viewed as learning an equivariant inertial manifold in the space $H/G$. By explicitly separating the spatial translation, the model avoids representing entire group orbits within the latent coordinate and thus achieves a more compact representation.

% \subsection{Architectural realization}

% The shift-equivariant IMNO is implemented through the following components:

% \paragraph{Canonical encoding.}
% At the first time step, the solution is aligned to the canonical frame via phase alignment. The encoder $\mathcal{E}_\theta$ first applies an FFT to the aligned field $\hat{u}$ and then maps the Fourier coefficients through an MLP to produce the latent coordinate $h$. Because the encoder operates in Fourier space, it is resolution free and can accept inputs at arbitrary grid resolutions.

% \paragraph{Latent evolution.}
% The intrinsic manifold dynamics is modeled through a neural ODE for $h(t)$, while the phase increment is predicted as a function of the latent state.

\subsection{Residual dynamics}
The evolution of the residual component $u^R$ is challenging, as its update depends not only on the current residual component but also on the latent coordinate$h$ and the translation phase $\phi$.
In our model, the residual dynamics are modeled by
% \begin{equation}
%  u^R_{t+1} = \Phi_\theta(u^R_t, h_t, \phi_t),
% \end{equation}
\begin{equation}
 u^R(t+\Delta t) = \mathcal{R}_\theta(u^R(t), h(t+\Delta t), \phi(t+\Delta t)),
\end{equation}
where $\mathcal{R}_\theta$ denotes a learned residual update operator.

To incorporate the phase information into the residual update while preserving shift equivariance, we modify the positional input embedding used in the lifting layer. This differs from the standard coordinate embedding $[u^R(x),x]$, where the absolute coordinate $x$ is provided directly and may break translation equivariance. Instead, we replace the absolute coordinate by a phase-dependent spatial feature that transforms consistently with the solution under shifts. Specifically, for each grid point $x$, we define
\begin{equation}
\Psi(\phi)(x)
=
\cos\!\left(\phi+\frac{2\pi}{L}x\right),
\end{equation}
and concatenate this feature pointwise with the residual field:
\begin{equation}
[u^R_t(x),\ \Psi(\phi_{t+1})(x)].
\end{equation}
This augmented field is then passed through the same conditional Fourier-layer residual operator introduced in Section~3.2.2, with the Fourier layers conditioned on the latent coordinate $h_{t+1}$.

The key point is that the phase feature transforms consistently with spatial shifts. Under a translation $\mathcal{T}_\delta$, the phase changes from $\phi$ to $\phi+\delta$, and the embedding satisfies
\begin{equation}
\Psi(\phi+\delta)=\mathcal{T}_\delta\Psi(\phi).
\end{equation}
Since the remaining Fourier-layer operations commute with spatial translations, the residual operator preserves shift equivariance:
\begin{equation}
\mathcal{R}_\theta\bigl(\mathcal{T}_\delta u^R_t,h_{t+1},\phi_{t+1}+\delta\bigr)
=
\mathcal{T}_\delta\mathcal{R}_\theta(u^R_t,h_{t+1},\phi_{t+1}).
\end{equation}

\paragraph{Reconstruction.}
Once $u^R_t$, $\phi_t$, and $h_t$ have been obtained, the full solution is reconstructed as
\begin{equation}
u_t = u^R_t + \mathcal{T}_{\phi_t} \mathcal{D}_\theta(h_t).
\end{equation}
The decoder $\mathcal{D}_\theta$ is parameterized by a neural network that maps the latent coordinate $h$ to a set of low-frequency Fourier coefficients. Applying an inverse FFT then gives the corresponding field in physical space. To ensure that the decoded manifold component is represented in the canonical frame, we impose a gauge condition on the first Fourier mode: its imaginary part is set to zero and its real part is replaced by its absolute value so that it is nonnegative. So, the first Fourier coefficient of $\mathcal{D}_\theta(h)$ is real and nonnegative. Consequently, $\mathcal{D}_\theta(h_t)$ lies in the chosen canonical frame, while the translation operator $\mathcal{T}_{\phi_t}$ restores the spatial phase of the full solution.

Overall, by construction, the overall architecture is strictly shift-equivariant. For any spatial shift applied to the input, the predicted output shifts accordingly. This shift-equivariant IMNO reduces redundancy in the latent representation and improves the compactness of the learned model.

\section{Numerical Experiments}

In this section, we demonstrate the performance of IMNO and its shift-equivariant variant (hereafter abbreviated as IMNO-SE) on the Burgers, nonlocal Burgers, Kuramoto--Sivashinsky, and Navier--Stokes equations, and compare them with FNO and RNO baselines.

For a fair comparison, we use the same Fourier modes and channel width
across all Fourier layers. In particular, FNO uses four Fourier layers with
GeLU activations, whereas RNO uses one layer in each block; both FNO
and RNO retain $k_{\max}=24$ Fourier modes and use channel width $d_c=64$ in 1D cases, $k_{\max}=12$ and $d_c=20$ in 2D cases.

For IMNO, we adopt the same Fourier modes $k_{\max}$ and width
$d_c$ in the Fourier layers. Unless otherwise specified, the latent dimension
is $d_h=32$. In one-dimensional experiments, the encoder retains the lowest $k_E=6$
Fourier modes to extract global features, while the decoder generates the
manifold component using the lowest $k_D=8$ Fourier modes. In two-dimensional
experiments, we use $k_E=3$ and $k_D=4$. For the loss function, we set $\gamma = 1.1$, and $w_{\max}=10$
in \eqref{eq:loss_total}--\eqref{eq:time_weight}. 

% Unless otherwise specified, all remaining IMNO hyperparameters (e.g., encoder
% widths) are kept fixed across experiments.

To exploit the symmetry structure of the underlying PDEs, we use shift-equivariant versions of
IMNO, FNO, and RNO whenever the governing equation is shift equivariant. For PDEs without
translation symmetry, we instead use the original versions of these
models.

For the parameter $\alpha$, we evaluate multiple values in our experiments.
Intuitively, $\alpha$ controls how strongly the manifold component is encouraged to approximate the output function.
When $\alpha=0$, the latent coordinate $h$ only serves as a time-evolving variable that encodes the global information of the solution, and the reconstructed manifold component $u^M$ is not required to form
an explicit part of the solution.
As $\alpha$ increases, the model is increasingly encouraged to represent the reconstructed solution $u = u^M + u^R$ primarily through the manifold component $u^M$.

For the Burgers, 1D Navier--Stokes and 2D compressible Navier--Stokes equations, all training and testing data are taken from the
publicly available PDEBench repository \cite{takamoto2022pdebench}.
For other equations, we generate the dataset using our
own simulation code, following the same discretization and data format as
PDEBench to ensure a consistent training and evaluation pipeline. For all experiments, the dataset is split into training, validation, and test sets with a ratio of 8:1:1. The validation set is used for model selection during training. For each model and experimental configuration, we repeat the training with three fixed random seeds. For each seed, we select the model checkpoint that achieves the lowest validation error and record its error on the test set. The results reported below are the mean and standard deviation of the test errors over the three seeds. We train and evaluate the models under the autoregressive training setup. Starting from the initial condition $u_0$,
the model predicts $ u(t_1)$, and then uses $u(t_1)$ to predict $u(t_2)$. This procedure is
iterated so that each future state $u(t_{n})$ is generated solely from
previous predictions, yielding a full autoregressive rollout up to a prescribed final
time $t_{\mathrm{end}}$. During training, the predictions at all rollout steps are
included in the loss function.

To quantitatively evaluate accuracy over the rollout interval
$[1,\, t_{\mathrm{end}}]$, we employ a normalized relative $L^2$ error over the spatial domain:

$$
Error
=
\sqrt{
\frac{
\sum_{n=1}^{t_{\mathrm{end}}}
\sum_{j=1}^{N_x}
\big( u(x_j,t_n) - u^{ref}(x_j,t_n) \big)^2
}{
\sum_{n=1}^{t_{\mathrm{end}}}
\sum_{j=1}^{N_x}
u^{ref}(x_j,t_n)^2
}
}.
$$

In practice, the error is computed for each sample and then averaged over the evaluation batch.

\subsection{Burgers Equation}

The one-dimensional Burgers equation is a classical nonlinear PDE occurring in many fields, such as fluid mechanics and traffic flow. It takes the form

$$
\begin{aligned}
\partial_t u(x, t)+\partial_x\left(u^2(x, t) / 2\right) & =\nu \partial_{x x} u(x, t), & & x \in(0,1) \\
u(x, 0) & =u_0(x), & & x \in(0,1)
\end{aligned}
$$

with periodic boundary conditions. 

For the training procedure, 
we employ a dataset consisting of
$N_{\mathrm{data}} = 10000$ samples, each resolved on a spatial grid of
$N_x = 256$ points with spacing $\Delta x = \tfrac{1}{256}$.

In the temporal direction, the solution is sampled every $\Delta t = 0.05$,
yielding $N_t = 41$ snapshots from $t = 0$ to $t= 2$.  
During training and evaluation, the model is provided only the initial snapshot
$u(\cdot,0)$ and must autoregressively predict all subsequent time steps
$u(\cdot,t_1), u(\cdot,t_2), \dots, u(\cdot,t_{\mathrm{end}})$.
We train and evaluate the models under two rollout horizons
($t_{\mathrm{end}} = 0.5$ and $t_{\mathrm{end}} = 2$),
and report the results in Table~\ref{Burgers2}. 

% \begin{table}[H]
% \centering
% \caption{Relative $L^2$ error and number of parameters of different neural operators at two rollout horizons}
% \label{Burgers2}
% \begin{tabular}{c|ccccccc}
% \toprule
%  & LNO & FNO & LFNO & RLNO & RLFNO & IMNO & IMNO-SE\\
% \midrule
% Params 
% & 132097 & 418497 & 525313 & 138433 & 531649 & 482673 & 529610\\
% \midrule
% \multicolumn{8}{c}{$t_{\mathrm{end}} = 0.5$} \\
% \midrule
% $\nu = 0.002$ & 16.45\% & 3.21\% & 2.55\% & 15.27\% & 2.29\% & 2.20\% &2.41\% \\
% $\nu = 0.01$  & 9.29\% & 1.72\% & 1.06\% & 9.47\% & 1.05\% &0.97\%  &1.31\% \\
% \midrule
% \multicolumn{8}{c}{$t_{\mathrm{end}} = 2$} \\
% \midrule
% $\nu = 0.002$ & - & 9.26\% & - & 14.77\% & 3.02\% & 2.18\% &2.65\%\\
% $\nu = 0.01$  & - & - & - & 8.66\% & 1.07\% &0.85\% &1.49\%\\
% \bottomrule
% \end{tabular}
% \end{table}

\begin{table}[H]
\centering
\caption{Relative $L^2$ error of FNO, RNO and IMNO-SE for the Burgers Equation}
\label{Burgers2}
\begin{tabular}{c|cccc}
\toprule
 & FNO  &RNO  & IMNO-SE($\alpha =0.1$) & IMNO-SE($\alpha =0$)\\
\midrule
Params 
& 418433 &623533& 538314 & 538314\\
\midrule
\multicolumn{5}{c}{$t_{\mathrm{end}} = 0.5$} \\
\midrule
$\nu = 0.002$ & \(2.91 \pm 0.07\)\% & \(3.55 \pm 0.05\)\%  &\(2.25 \pm 0.03\)\% &\(1.94 \pm 0.04\)\%\\
$\nu = 0.01$  & \(1.35 \pm 0.08\)\%  &\(1.79 \pm 0.03\)\%  &\(1.16 \pm 0.03\)\% &\(0.78 \pm 0.03\)\%\\
\midrule
\multicolumn{5}{c}{$t_{\mathrm{end}} = 2$} \\
\midrule
$\nu = 0.002$ & \(53.3 \pm 43.8\)$\%^*$ & \(4.12 \pm 0.92\)\% &\(2.45 \pm 0.12\)\% &\(1.95 \pm 0.12\)\%\\
$\nu = 0.01$  & \(67.4 \pm 25.8\)$\%^*$ & \(1.80 \pm 0.11\)\%  &\(1.46 \pm 0.10\)\% &\(0.85 \pm 0.02\)\%\\
\bottomrule
\end{tabular}

\vspace{0.5em}
\begin{minipage}{0.78\textwidth}
\footnotesize
\textit{Note.} ``$^{*}$'' indicates unstable training leading to degraded performance. 
\end{minipage}
\end{table}

% \noindent\textit{Note.} The symbol $*$ indicates unstable training, where the loss does not decrease steadily but suddenly increases at some epoch, leading to degraded final performance. The symbol $-$ indicates that training is highly unstable, resulting in either NaN values or a relative error larger than $1$.

From Table~\ref{Burgers2}, we observe that IMNO-SE achieves the best overall
accuracy across both short- and long-horizon rollouts. It remains stable in the
long-horizon setting, unlike FNO, and achieves better performance with fewer
parameters than RNO.

Figure~\ref{fig:burgers_imnosi_examples} shows representative IMNO-SE predictions of $u$ and $u^M$ for two representative initial conditions, compared with the ground truth. We observe that $u^M$ can capture the long-term dynamics of the system.
\begin{figure}[H]
    \centering
    \begin{subfigure}{0.9\textwidth}
        \centering
        \includegraphics[width=\textwidth]{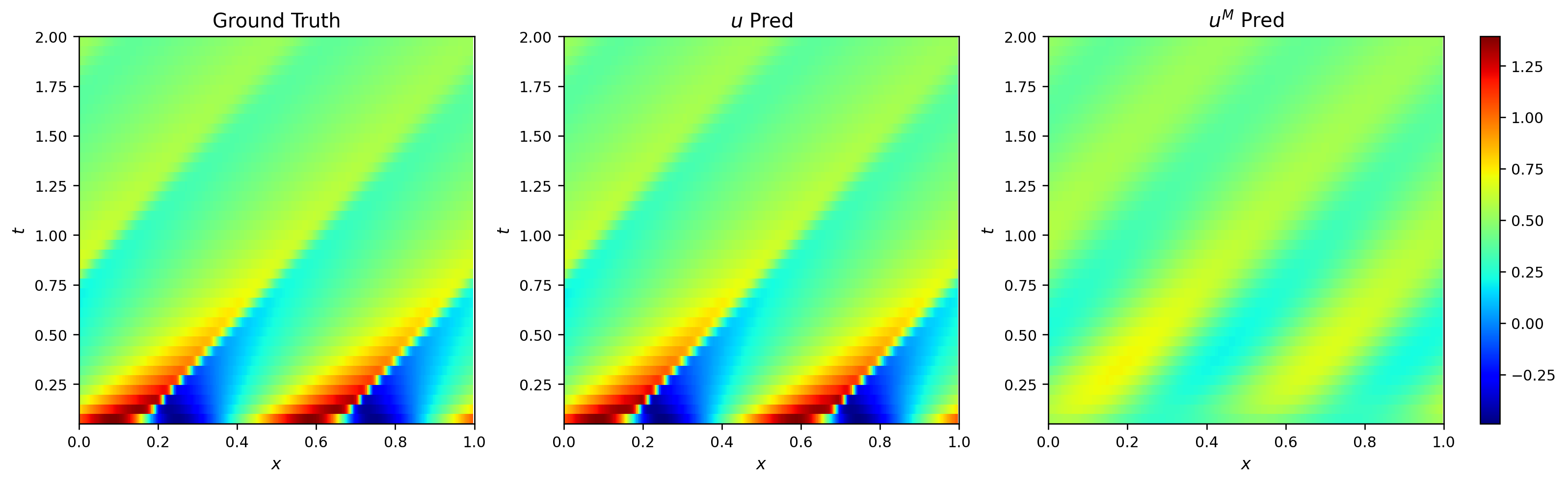}
        \caption{Example 1.}
        \label{fig:burgers_imnosi_case1}
    \end{subfigure}

    \hfill

    \begin{subfigure}{0.9\textwidth}
        \centering
        \includegraphics[width=\textwidth]{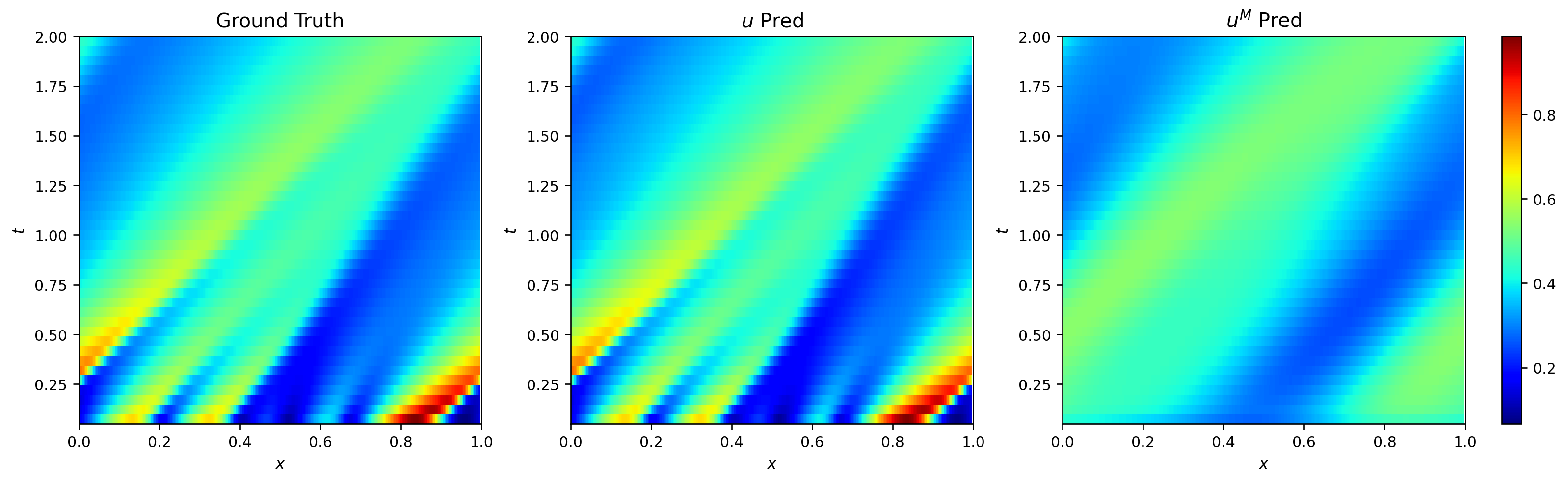}
        \caption{Example 2.}
        \label{fig:burgers_imnosi_case2}
    \end{subfigure}
    \caption{IMNO-SE ($\alpha=0.1$) predictions for 1D Burgers equation ($\nu=0.01$).}
    \label{fig:burgers_imnosi_examples}
\end{figure}

\subsection{Nonlocal Burgers Equation}

We next consider a variant of the Burgers equation, in which the advection speed at each point depends on the whole solution. The equation takes the form
\[
\begin{aligned}
\partial_t u(x, t) + a(t)\,\partial_x u(x,t) &= \nu \partial_{xx} u(x,t) +f(x), 
& & x \in (0,1)\;  \\
u(x,0) &= u_0(x), 
& & x \in (0,1)
\end{aligned}
\]
with periodic boundary conditions. Here, the transport velocity is nonlocal and is defined through the inner product
\[
a(t)=\langle \omega, u(t)\rangle=\int_0^1 \omega(x)\,u(x,t)\,dx,
\]
where $\omega$ is a prescribed kernel. The right-hand side term $f(x)$ represents external forcing. Unlike Burgers equation, the advection at each spatial location depends not only on the local value of $u$ but also on global spatial information. According to \cite{constantin2012integral},  when the parameters satisfy suitable conditions, this equation admits an inertial manifold, providing a rigorous finite-dimensional structure for its long-time dynamics.

For the training procedure, we use the same dataset configuration as in the previous Burgers case. The dataset consists of $N_{\mathrm{data}} = 10000$ samples, each discretized on a spatial grid with $N_x = 256$ points and spacing $\Delta x = \tfrac{1}{256}$. The solution is also sampled with step size $\Delta t = 0.05$, yielding $N_t = 41$ snapshots from $t=0$ to $t=2$. As before, the model is provided only with the initial condition $u(\cdot,0)$ and must autoregressively predict all subsequent steps. We train and evaluate both short- and long-horizon rollouts ($t_{\mathrm{end}} = 0.5$ and $t_{\mathrm{end}} = 2$). 

Compared to the standard Burgers equation, this nonlocal system breaks translation equivariance due to the global coupling through $a(t)$, so we do not report IMNO-SE results here, and both FNO and RNO are evaluated using non-translation-equivariant versions.

In this experiment, we fix the parameters as
\[
\omega(x)=\cos(2\pi x)+0.5\cos(4\pi x), \qquad \nu=0.01.
\]

For the forcing term \(f\), we consider two representative cases.
The first case is the unforced setting \(f=0\), which is relatively simple. In this case, although the equation contains the nonlocal transport term \(\langle \omega,u(t) \rangle u_x\), the advection speed depends only on time through the scalar quantity \(\langle \omega,u(t) \rangle\). Consequently, after a time-dependent spatial shift, the equation is equivalent to a heat equation, and the dynamics are essentially simple without nontrivial long-time structures.

The second case considers a nonzero forcing term. In particular, we take
\[
f(x) = -2\sin(2\pi x) - \sin(4\pi x).
\]
For this choice, by examining the finite-dimensional reduced system associated with the leading Fourier modes, we can verify that the reduced dynamics admit an equilibrium with a two-dimensional unstable manifold \cite{rosa1996inertial}. Since the inertial manifold is also parameterized by the leading Fourier modes in this equation, the induced dynamics on the inertial manifold are already nontrivial. Therefore, the forced case provides a more challenging test setting to examine our model.

The quantitative results are reported in Table~\ref{NonlocalBurgers}. Although FNO and IMNO achieve comparable performance at the short-horizon prediction time \(t=0.5\), FNO exhibits training instability over the longer horizon \(t=2\), which leads to noticeably worse results. By contrast, RNO is also stable over the longer horizon, but it has a larger number of parameters and still yields lower accuracy than IMNO.

\begin{table}[H]
\centering
\caption{Relative $L^2$ error of FNO, RNO and IMNO for the nonlocal Burgers equation}
\label{NonlocalBurgers}
\begin{tabular}{c|cccc}
\toprule
 & FNO & RNO  & IMNO($\alpha=0.1$) & IMNO($\alpha=0$) \\
\midrule
Params 
& 418497 &623617 & 491377 & 491377\\
\midrule
\multicolumn{5}{c}{$t_{\mathrm{end}} = 0.5$} \\
\midrule
$f=0$ & \(0.59 \pm 0.09\)\% & \(0.89 \pm 0.05\)\% & \(0.51 \pm 0.004\)\% &  \(0.43 \pm 0.02\)\% \\
$f\neq 0$  & \(0.77 \pm 0.06\)\% & \(1.10 \pm 0.03\)\% & \(0.54 \pm 0.004\)\% &  \(0.48 \pm 0.03\)\% \\
\midrule
\multicolumn{5}{c}{$t_{\mathrm{end}} = 2$} \\
\midrule
$f=0$ & \(16.60 \pm 20.34\)$\%^*$ & \(1.14 \pm 0.03\)\% & \(0.58 \pm 0.0003\)\% & \(0.41 \pm 0.006\)\%  \\
$f\neq 0$  & \(26.71 \pm 12.89\)$\%^*$ & \(2.36 \pm 0.08\)\% & \(1.16 \pm 0.03\)\% &  \(1.00 \pm 0.06\)\%  \\
\bottomrule
\end{tabular}
\end{table}

Figure~\ref{fig:nonlocal_burgers_examples} shows representative IMNO predictions for both $u$ and $u^M$, compared with the ground truth. We observe that \(u^M\) captures the dominant long-time dynamics, while the residual correction \(u^R\) refines the prediction so that the whole prediction \(u\) remains accurate even through the short-time transient regime. 
\begin{figure}
    \begin{subfigure}{0.9\textwidth}
        \centering
        \includegraphics[width=\textwidth]{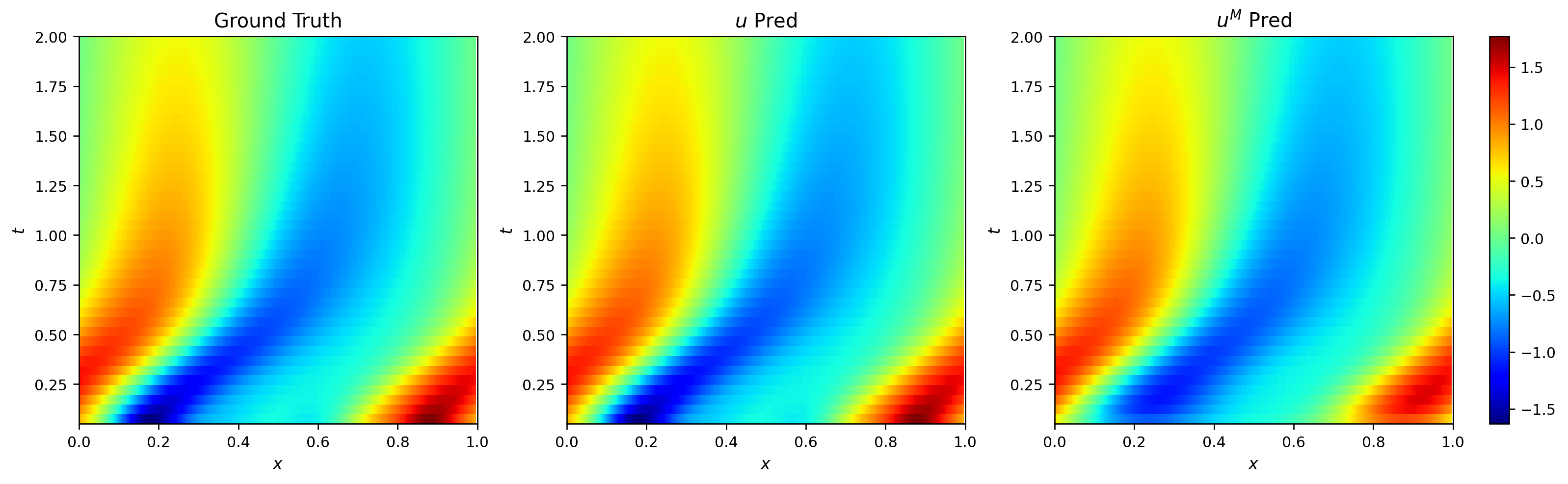}
        \caption{case 1.}
    \end{subfigure}

    \vspace{1.0em}

    \begin{subfigure}{0.9\textwidth}
        \centering
        \includegraphics[width=\textwidth]{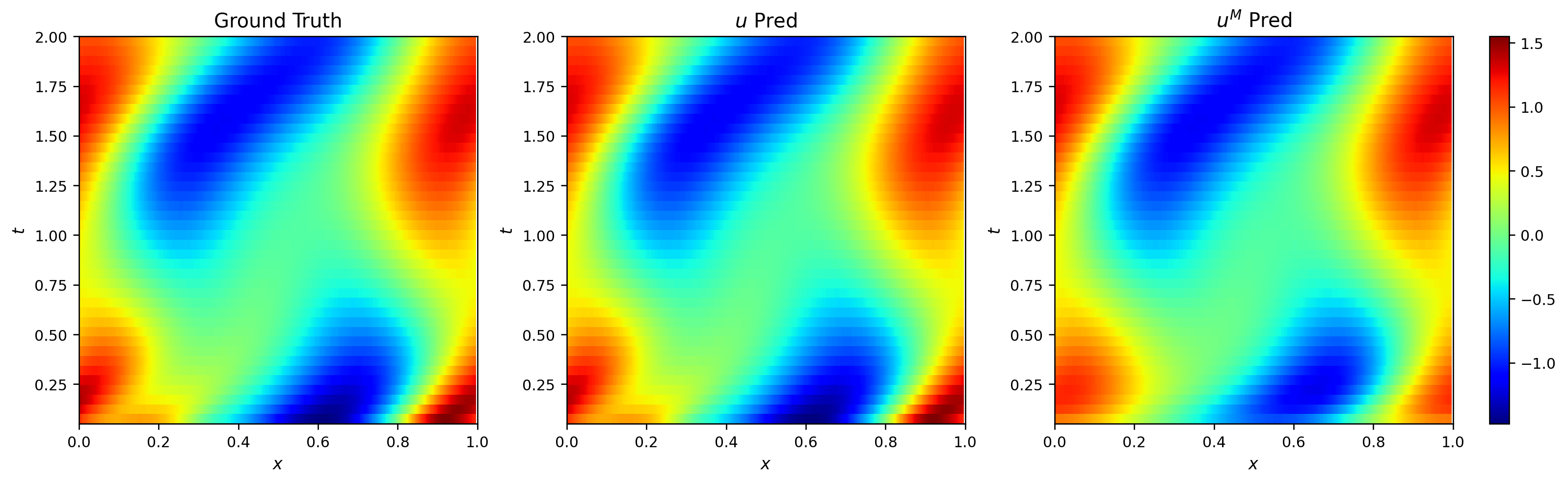}
        \caption{case 2.}
    \end{subfigure}
    \caption{IMNO ($\alpha=0.1$) predictions for the nonlocal Burgers equation.}
    \label{fig:nonlocal_burgers_examples}
\end{figure}

To further demonstrate the structure of the learned latent dynamics, we project the trajectory of the latent coordinate $h$ onto the first three principal components obtained from PCA. This provides a low-dimensional visualization of the learned latent coordinate's evolution and helps illustrate whether it captures the reduced dynamics on the inertial manifold. Figure~\ref{fig:nonlocal_burgers_hidden_pca} shows projected latent trajectories of the two cases.

\begin{figure}[H]
    \centering
    \begin{subfigure}{0.48\textwidth}
        \centering
        \includegraphics[width=\textwidth]{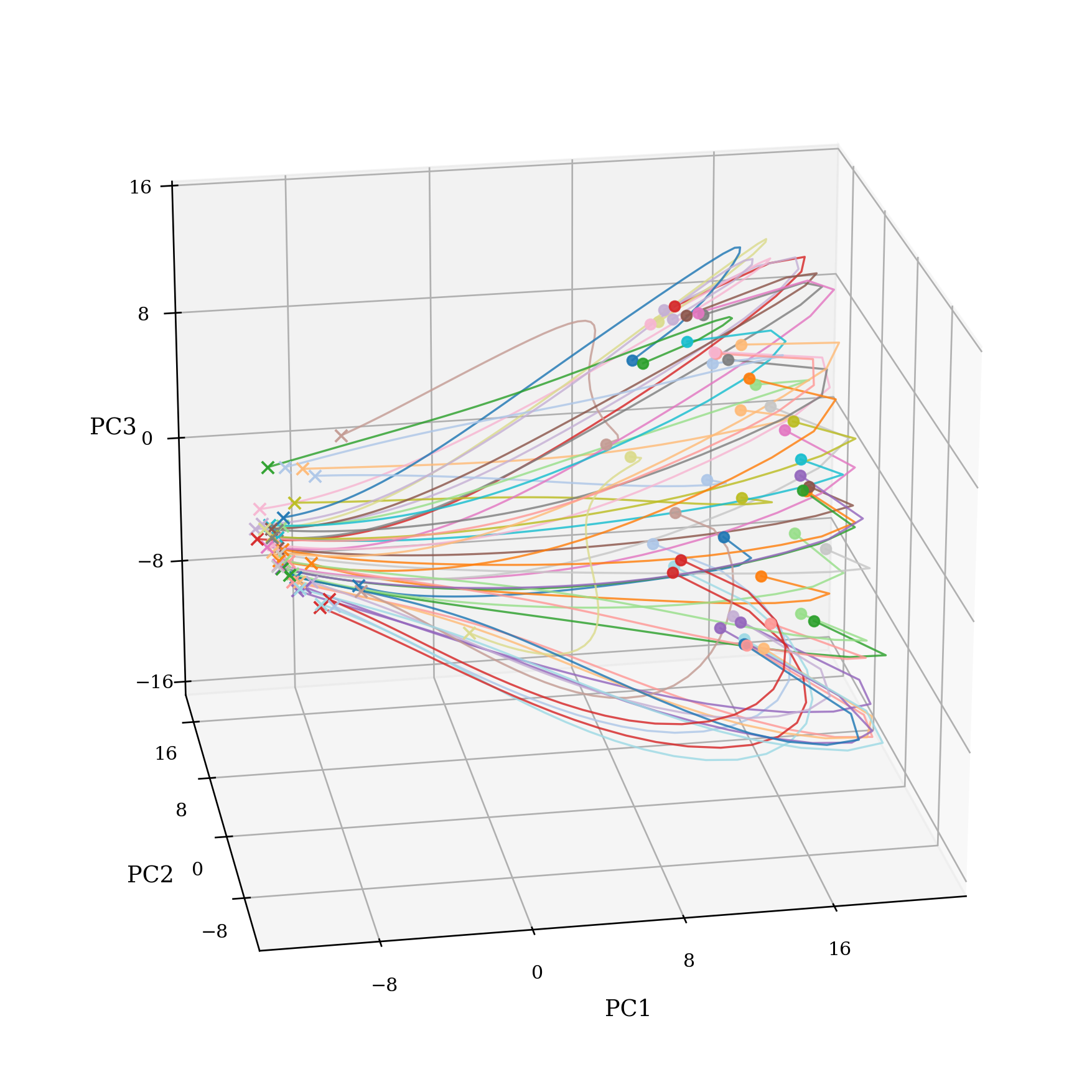}
        \caption{Case 1.}
        \label{fig:nonlocal_burgers_hidden_pca_case1}
    \end{subfigure}
    \hfill
    \begin{subfigure}{0.48\textwidth}
        \centering
        \includegraphics[width=\textwidth]{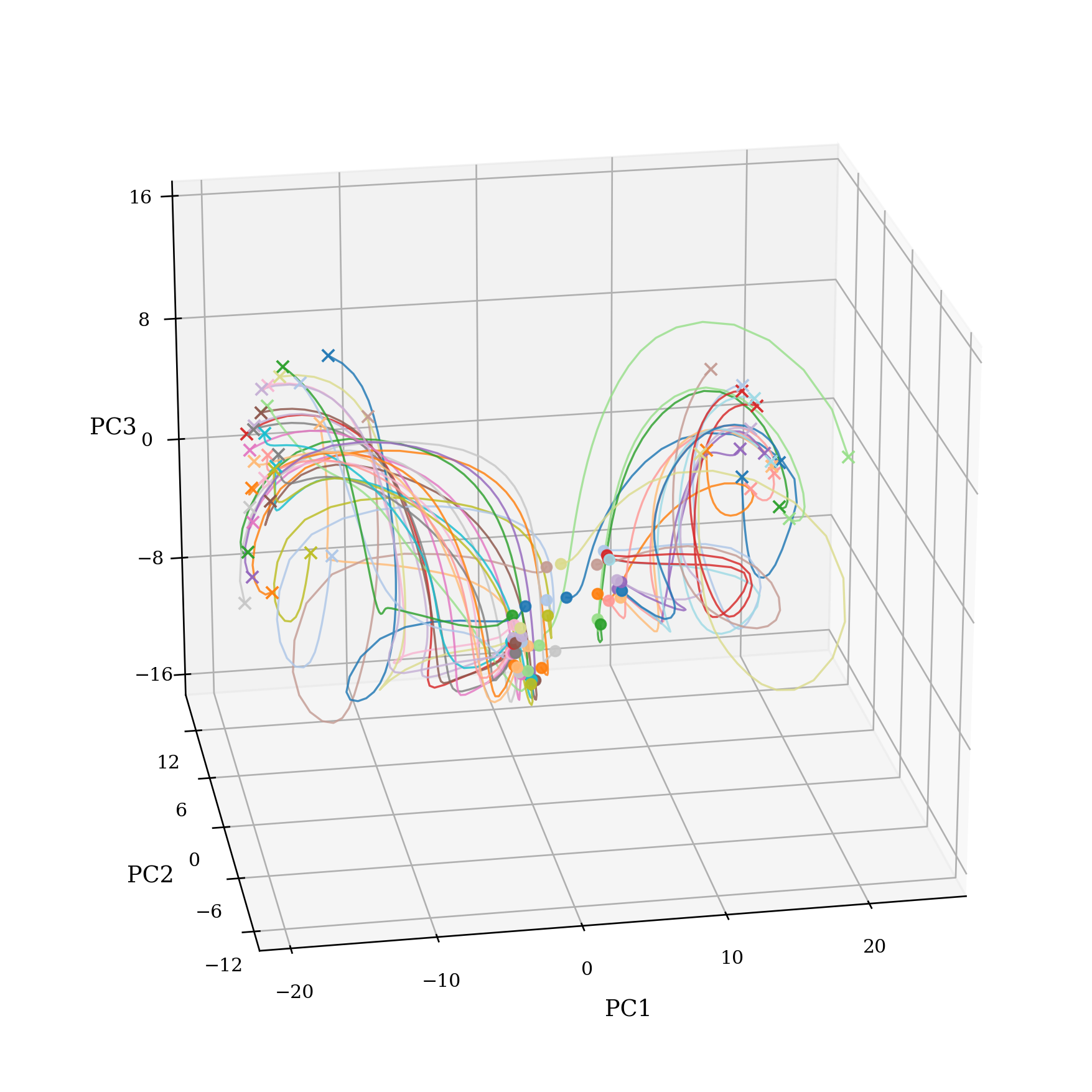}
        \caption{Case 2.}
        \label{fig:nonlocal_burgers_hidden_pca_case2}
    \end{subfigure}
    \caption{Representative hidden-state trajectories projected onto the top three PCA modes for the nonlocal Burgers equation. (Dots indicate the initial states of different trajectories, while crosses mark their corresponding final states at $t=2$.)}
    \label{fig:nonlocal_burgers_hidden_pca}
\end{figure}

For case 1, where the forcing is zero, one can rigorously show that the reduced dynamics on the inertial manifold admits a unique stable fixed point. The projected latent trajectory is consistent with this picture: it evolves toward a single attractor, indicating that the learned latent dynamics correctly captures the simple asymptotic behavior of the system.

For case 2, where the forcing is nonzero, one can likewise show that the reduced dynamics on the inertial manifold admits one saddle point and two stable fixed points. The projected latent trajectories again agree well with this phase-space structure: they approach two different attractors, reflecting the nontrivial dynamics induced by the forcing.

\subsection{Kuramoto--Sivashinsky equation}

We consider the one-dimensional Kuramoto--Sivashinsky (KS) equation
% $$
% \begin{aligned}\label{eq:KS}
% \partial_t u(x,t)
% &= -u \partial_x u(x,t)
% - \partial_{xx} u(x,t)
% - \partial_{xxxx} u(x,t), &&
% \qquad x \in (0,L)
% \\
% u(x,0) &= u_0(x), 
%   && x \in (0,L)
% \end{aligned}
% $$
\begin{equation}
\begin{alignedat}{2}\label{eq:KS}
\partial_t u(x,t)
&= -u(x,t) \partial_x u(x,t)
- \partial_{xx} u(x,t)
- \partial_{xxxx} u(x,t),
&\qquad & x \in (0,L) \\
u(x,0) &= u_0(x),
&\qquad & x \in (0,L)
\end{alignedat}
\end{equation}
with periodic boundary conditions on $[0,L]$.
% We take $L=4\pi$ and $L=8\pi$ and assume
% $u_0 \in \dot L^2_{\mathrm{per}}([0,L];\mathbb{R})$, the space of mean-zero,
% square-integrable periodic functions.

% For any such initial condition, the KS equation generates a global semigroup
% $S_t : \dot L^2_{\mathrm{per}} \to \dot L^2_{\mathrm{per}}$.
In this experiment, data are generated by numerically solving \eqref{eq:KS} using an exponential
time-differencing fourth-order Runge--Kutta (ETDRK4) scheme.
Initial conditions are sampled from a mean-zero Gaussian measure with
covariance
\[
L^{-2}\
\bigl(-\Delta + (49/L^2) I\bigr)^{-2}.
\]
(Equivalently, in Fourier space, the zero mode is set to zero,
\(\widehat u_0=0\). For each positive Fourier mode \(k>0\), the complex
coefficient \(\widehat u_k\) is sampled as a mean-zero Gaussian random
variable with variance
\[
\mathbb E|\widehat u_k|^2
=
L^2\left((2\pi k)^2+49\right)^{-2},
\]
and the negative modes are determined by
\[
\widehat u_{-k}=\overline{\widehat u_k}.
\]
Thus the initial condition is a real-valued, mean-zero Gaussian random field whose Fourier variance scales like \(k^{-4}\) at high wavenumbers.)

For the training procedure, like the previous experiment, we still employ a dataset consisting of
$N_{\mathrm{data}} = 10000$ samples, each resolved on a spatial grid of
$N_x = 256$ points with spacing $\Delta x = \tfrac{L}{256}$.

In the temporal direction, the solution is sampled every $\Delta t = 1$,
yielding $N_t = 41$ snapshots from $t = 0$ to $t= 40$.  
During training and evaluation, the model is provided only the initial condition
$u(\cdot,0)$ and autoregressively predicts all subsequent steps.
We then train and evaluate the models under two horizons ($t_{end} = 20$ and $t_{end} = 40$) for both
$L=4\pi$ and $L=8\pi$. The Table~\ref{KS2} compares the autoregressive prediction
performance of FNO, RNO and IMNO.
\begin{table}[H]
\centering
\caption{Relative $L^2$ error of FNO, RNO and IMNO-SE for the KS equation}
\label{KS2}
\begin{tabular}{c|cccc}
\toprule
 & FNO & RNO & IMNO-SE($\alpha=0.1$) & IMNO-SE($\alpha=0$) \\
\midrule
Params 
& 418433 & 623533 & 538314 & 538314 \\
\midrule
\multicolumn{5}{c}{$t_{\mathrm{end}} = 20$} \\
\midrule
$L = 4\pi$ & \(1.04 \pm 0.20\)\% & \(0.42 \pm 0.01\)\% & \(0.52 \pm 0.02\)\% & \(0.46 \pm 0.003\)\% \\
$L = 8\pi$ & \(1.74 \pm 0.05\)\% & \(1.19 \pm 0.01\)\% & \(4.25 \pm 0.17\)\% & \(2.99 \pm 0.42\)\% \\
\midrule
\multicolumn{5}{c}{$t_{\mathrm{end}} = 40$} \\
\midrule
$L = 4\pi$ & \(87.67 \pm 11.85\)$\%^*$ & \(2.19 \pm 0.23\)\% & \(0.89 \pm 0.05\)\% & \(0.84 \pm 0.02\)\% \\
$L = 8\pi$ & \(94.17 \pm 2.72\)$\%^*$ & \(13.99 \pm 0.71\)\% & \(41.21 \pm 5.63\)\% & \(35.58 \pm 3.63\)\% \\
\bottomrule
\end{tabular}
\end{table}

When $L=4\pi$, the system is not chaotic, and the manifold prediction $u^M$ reconstructs most of the dominant features. Figure~\ref{fig:ks_4pi_examples} shows IMNO-SE predictions for two representative initial conditions, compared with the ground truth. We observe that both the full prediction $u$ and the manifold component $u^M$ are highly accurate in this experiment, indicating that in this setting, solutions evolve towards the inertial manifold very quickly. IMNO-SE learned the dynamics on the inertial manifold accurately.
\begin{figure}[H]
    \centering
    \begin{subfigure}{0.9\textwidth}
        \centering
        \includegraphics[width=\textwidth]{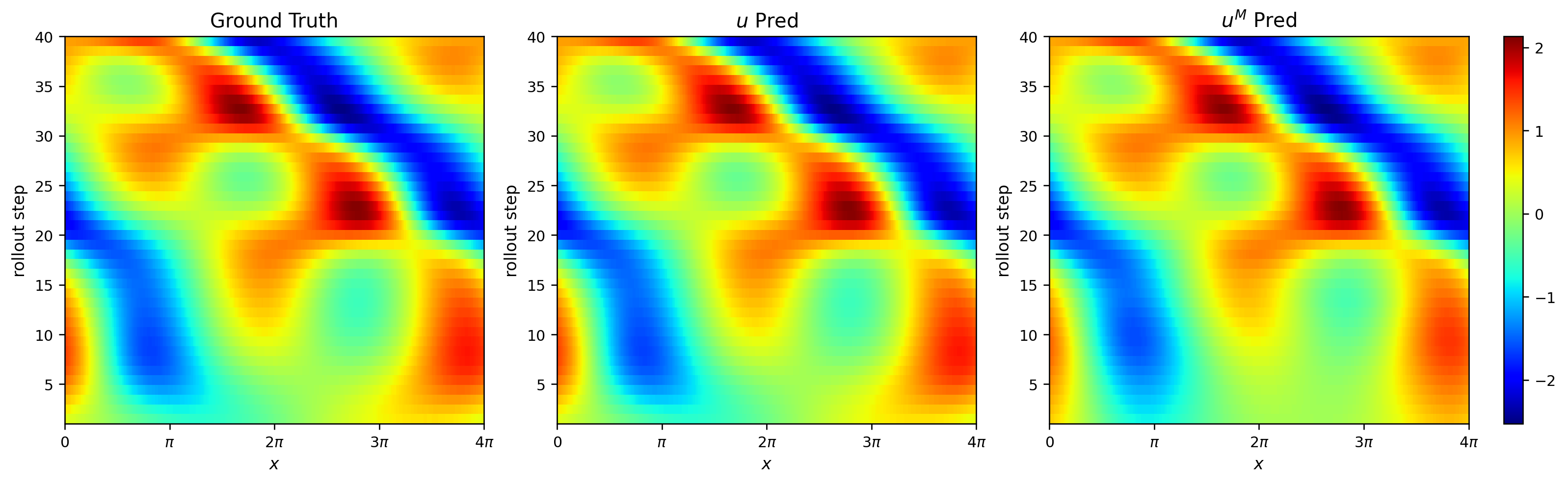}
        \caption{Example 1.}
        \label{fig:ks_4pi_case1}
    \end{subfigure}

    \vspace{1.0em}

    \begin{subfigure}{0.9\textwidth}
        \centering
        \includegraphics[width=\textwidth]{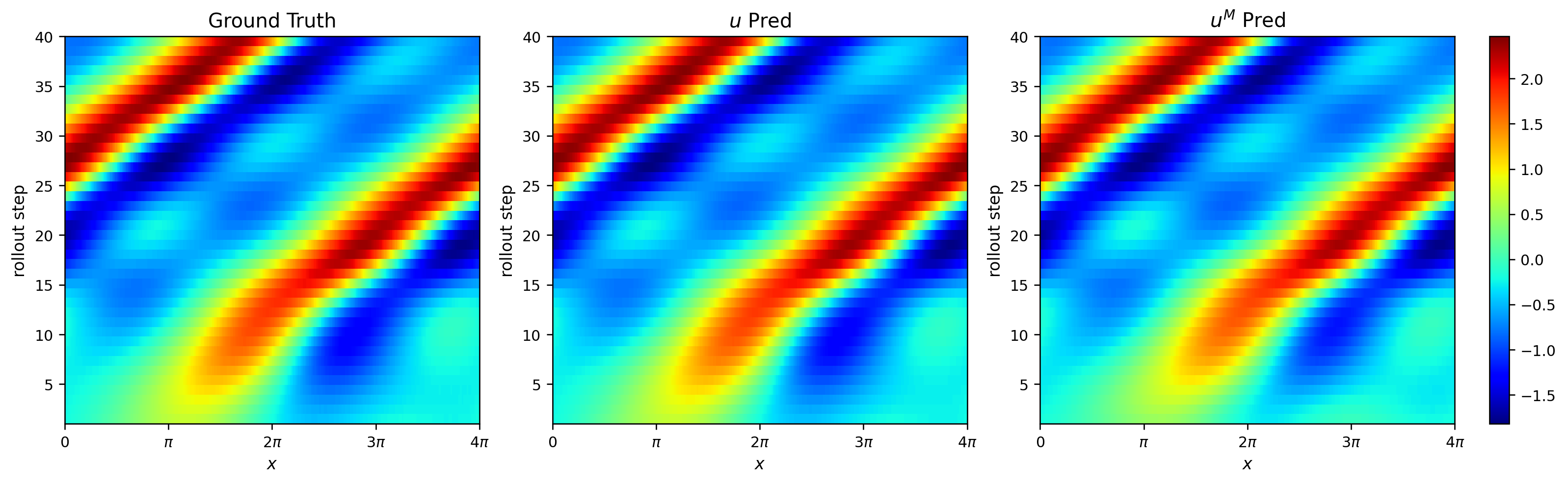}
        \caption{Example 2.}
        \label{fig:ks_4pi_case2}
    \end{subfigure}
    \caption{IMNO-SE ($\alpha=0.1$) predictions for the KS equation at $L=4\pi$}
    \label{fig:ks_4pi_examples}
\end{figure}

Moreover, in this setting, the Kuramoto--Sivashinsky equation lies in the regime of a low-dimensional $1:2$ interaction between the first two Fourier modes and admits two symmetry-related stable traveling waves. These waves are relative equilibria in the full state space, but become stable fixed points after quotienting out translation symmetry. Consistent with this picture, the PCA visualization of the trajectories in the learned latent coordinate $h$ in Figure~\ref{fig:ks_4pi_hidden_pca} clearly approach two different attractors, with oscillatory transients before convergence. This suggests that the latent trajectories have captured the underlying dynamics.

\begin{figure}[H]
    \centering
    \includegraphics[width=0.48\textwidth]{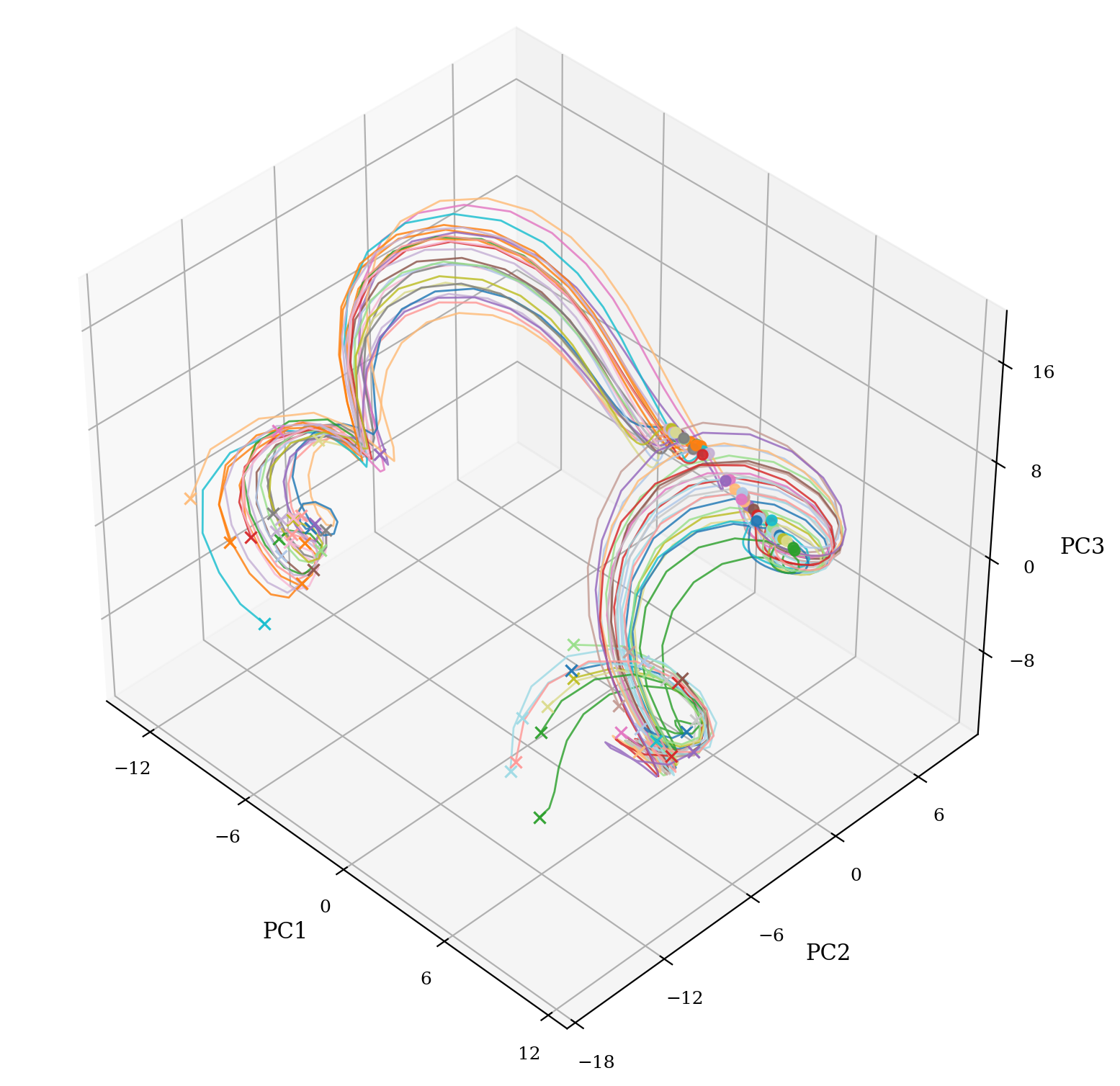}
    \caption{Representative hidden-state trajectories projected onto the top three PCA modes for the KS equation at $L=4\pi$.}
    \label{fig:ks_4pi_hidden_pca}
\end{figure}

However, when $L=8\pi$, the dynamics become chaotic. IMNO-SE's predictive accuracy is lower than that of FNO and RNO. 
In fact, we find that IMNO is less accurate for PDEs with strongly chaotic dynamics, although it still captures some low-dimensional structure effectively. This is consistent with the intrinsic difficulty of predicting chaotic systems: small errors grow rapidly, and the additional approximation error introduced by the reduced manifold dynamics can accumulate during long autoregressive rollout and degrade the overall prediction quality. We will look further into this issue into the experiment of Kolmogorov flow.

\subsection{One-dimensional NS Equation}

We consider the one-dimensional compressible Navier--Stokes equations on the torus
$\Omega = (0,1)$:
% \begin{equation}\label{eq:ns_mass}
% \partial_t \rho + \partial_x(\rho v) = 0,
% \end{equation}
% \begin{equation}\label{eq:ns_momentum}
% \rho(\partial_t v + v\,\partial_x v) = -\partial_x p + \left(\zeta + \frac{4\eta}{3}\ \right)\partial_{xx} v,
% \end{equation}
% \begin{equation}\label{eq:ns_energy}
% \partial_t\!\left[e + \frac{\rho v^2}{2}\right]
%  + \partial_x\!\left[\left(e + p + \frac{\rho v^2}{2}\right) v - v\,\left(\zeta +\frac{4\eta}{3}\right)\partial_{x} v \right] = 0,
% \end{equation}
\begin{align}
\partial_t \rho + \partial_x(\rho v)
&= 0, \\
\rho(\partial_t v + v\,\partial_x v)
+\partial_x p&=
\left(\zeta + \frac{4\eta}{3}\right)\partial_{xx} v,
 \\
\partial_t\!\left(e + \frac{\rho v^2}{2}\right)
+\partial_x\!\biggl[
\left(e + p + \frac{\rho v^2}{2}\right)v
\biggr]
&= \partial_x\!\biggl[
v\left(\zeta + \frac{4\eta}{3}\right)\partial_x v
\biggr].
\end{align}

where $\rho$ is the density, $v$ is the velocity, $p$ is the pressure,
and $e = p/(\Gamma-1)$ is the internal energy density (Here we set $\Gamma=5/3$, corresponding to a monatomic ideal gas). The viscosities $\eta$ and
$\zeta$ denote shear and bulk viscosity, respectively, and are both set to 0.1 in our experiments.

For data generation and model training, we use a dataset from PDEBench \cite{takamoto2022pdebench} consisting of
$N_{\mathrm{data}} = 10000$ samples, each resolved on a spatial grid of
$N_x = 128$ points with spacing $\Delta x=\frac{1}{128}$.
In the temporal direction, the solution is sampled every $\Delta t = 0.02$,
yielding $N_t = 51$ snapshots.
Before training, we normalize the data channel-wise via a linear transformation,
so that the data distribution of different physical variables have the same means and variances.

In this experiment, we train and evaluate the models under two horizons ($t_{\mathrm{end}}=0.5$ and $t_{\mathrm{end}}=1.0$). The results of FNO, RNO, and IMNO-SE are reported in Table~\ref{tab:ns_results_cfd}.

\begin{table}[H]
\centering
\caption{Relative $L^2$ error of FNO, RNO, and IMNO-SE for the 1D NS Equation}
\label{tab:ns_results_cfd}
\begin{tabular}{c|ccccc}
\toprule
& FNO & RNO & IMNO-SE($\alpha=1$)  & IMNO-SE($\alpha=0.1$) & IMNO-SE($\alpha=0$) \\

\midrule
Params & 418819 & 623939 & 619340 & 619340 &  619340\\
\midrule

$t_{\mathrm{end}} = 0.5$ & \(12.05 \pm 13.44\)$\%^*$ & \(3.24 \pm 0.06\)\% & \(5.12 \pm 0.05\)\% & \(3.54 \pm 0.29\)\% & \(2.69 \pm 0.10\)\% \\

$t_{\mathrm{end}} = 1.0$ & \(84.47 \pm 5.14\)$\%^*$ & \(4.79 \pm 0.64\)\% & \(4.49 \pm 0.41\)\% & \(4.33 \pm 0.15\)\% & \(3.53 \pm 0.17\)\% \\

\bottomrule
\end{tabular}
\end{table}

Figure ~\ref{fig:ns_imnose_predictions} shows the IMNO-SE predictions for both $\alpha=0.1$ and $\alpha=1$. We observe a clear trade-off between these two choices of $\alpha$: when $\alpha=0.1$ the prediction of $u^M$ is not desirable, but for $\alpha=1$ the prediction of $u^M$ becomes much better and captures the long-time dynamics. On the other hand, increasing $\alpha$ also leads to a degradation in the overall accuracy of the final prediction $u$. This highlights the importance of the choice of parameter $\alpha$.
\begin{figure}[H]
    \centering
    \begin{subfigure}{0.8\textwidth}
        \centering
        \includegraphics[width=\textwidth]{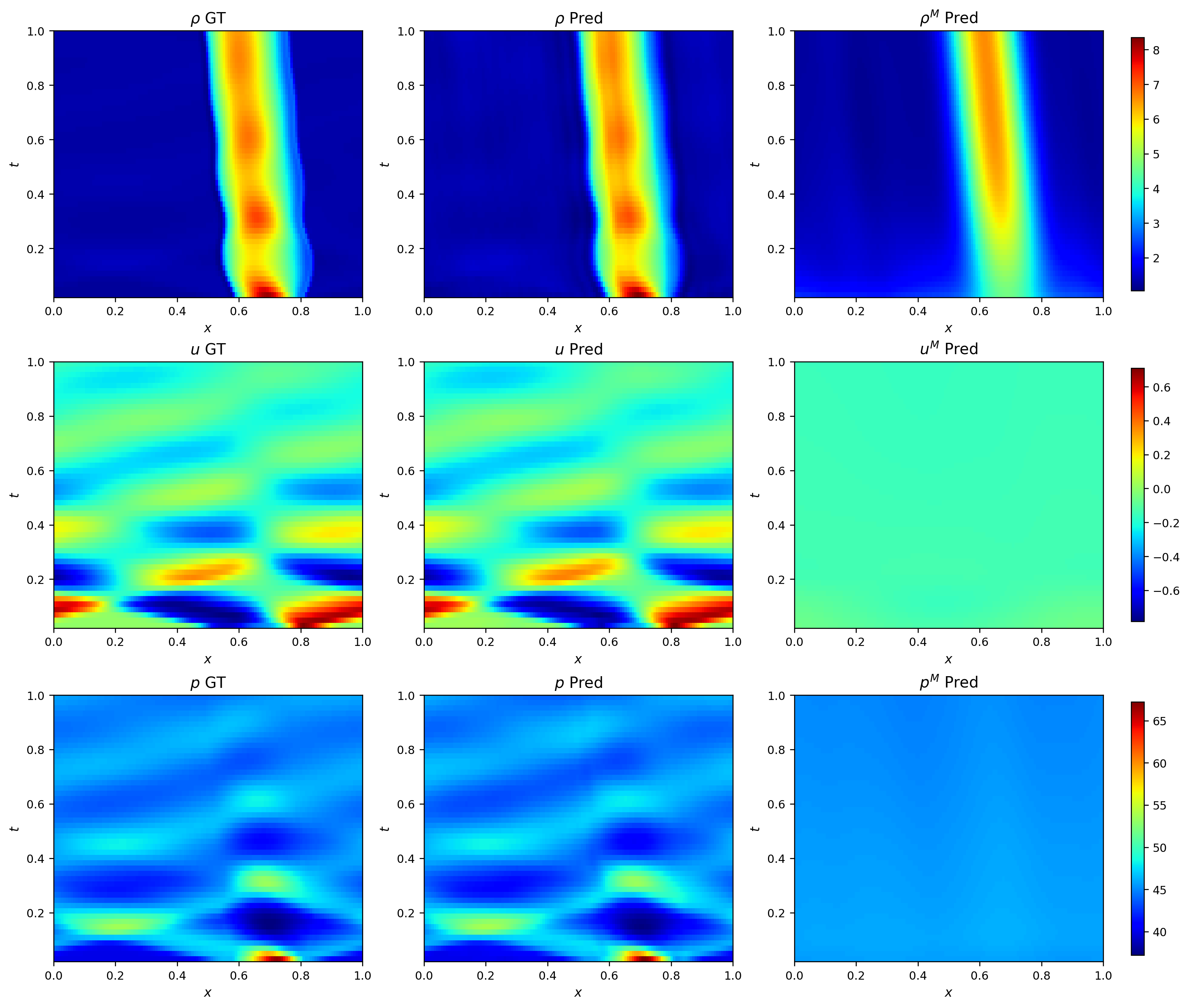}
        \caption{Model prediction($\alpha=0.1$).}
        \label{fig:ns_imnose_u}
    \end{subfigure}

    \hfill

    \begin{subfigure}{0.8\textwidth}
        \centering
        \includegraphics[width=\textwidth]{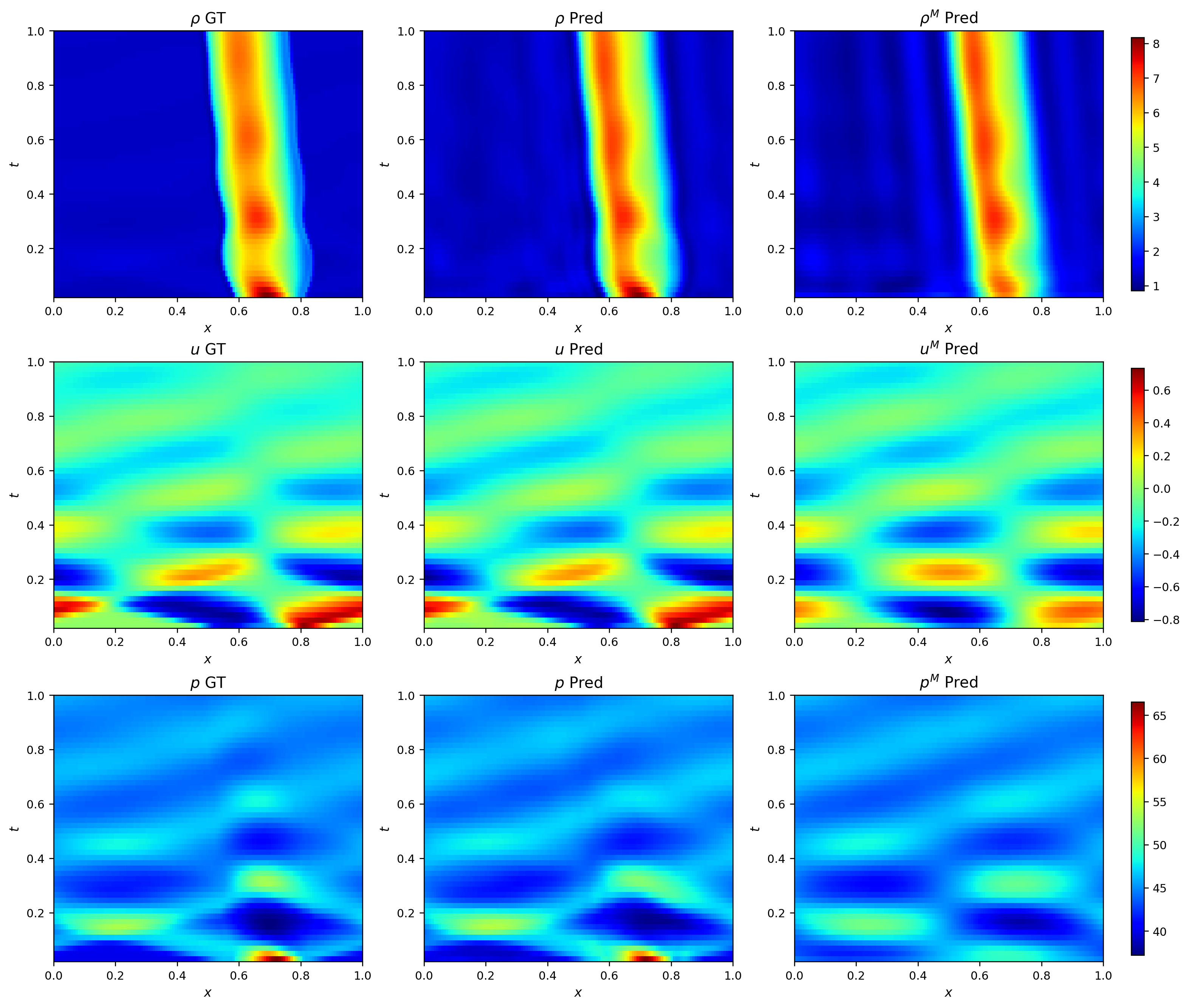}
        \caption{Model prediction ($\alpha=1$).}
        \label{fig:ns_imnose_um}
    \end{subfigure}
    \caption{IMNO-SE ($\alpha=0.1,1$) predictions for the 1D NS equation.}
    \label{fig:ns_imnose_predictions}
\end{figure}

\subsection{Two-dimensional Compressible Navier--Stokes Equation}
We consider the two-dimensional compressible Navier--Stokes equations on the torus
$\Omega = (0,1)\times(0,1)$:
\begin{align}
\partial_t \rho + \nabla \cdot (\rho \mathbf{v}) &= 0, \label{eq:ns2d_mass} \\
\partial_t (\rho \mathbf{v}) + \nabla \cdot (\rho \mathbf{v}\otimes \mathbf{v}) + \nabla p &= \nabla \cdot \boldsymbol{\tau}, \label{eq:ns2d_momentum} \\
\partial_t \!\left( e + \frac{1}{2}\rho |\mathbf{v}|^2 \right)
+ \nabla \cdot \!\left[\left(e+p+\frac{1}{2}\rho |\mathbf{v}|^2\right)\mathbf{v}\right]
&= \nabla \cdot (\boldsymbol{\tau}\mathbf{v}), \label{eq:ns2d_energy}
\end{align}
where $\rho$ is the density, $\mathbf{v}=(v_1,v_2)$ is the velocity, $p$ is the pressure,
and
$e = p/(\Gamma-1)$ is the internal energy density (Here we set $\Gamma=5/3$, corresponding to a monatomic ideal gas).
The viscous stress tensor is given by
\[
\boldsymbol{\tau}
= \eta \bigl(\nabla \mathbf{v} + \nabla \mathbf{v}^{\top}\bigr)
+ \left(\zeta - \frac{2}{3}\eta\right) (\nabla \cdot \mathbf{v}) I,
\]
where $\eta$ and $\zeta$ denote the shear and bulk viscosities, respectively. In this experiment, we set $\eta=\zeta=0.01$.

As in the 1D case, we use the PDEBench dataset \cite{takamoto2022pdebench}, which contains
$N_{\mathrm{data}} = 1000$ samples, each resolved on a spatial grid of
$N_x = N_y =64$ points with spacing $\Delta x=\Delta y=\frac{1}{64}$.
In the temporal direction, the solution is sampled every $\Delta t = 0.05$,
yielding $N_t = 21$ snapshots. The initial velocity field is scaled so that the resulting initial characteristic Mach number is a prescribed value $M=0.1$.
Again, before training, we normalize the data channel-wise via a linear transformation,
so that the data distribution of different physical variables have the same means and variances.

In this experiment, we train and evaluate the models under two horizons ($t_{\mathrm{end}}=0.5,1.0$). The results of FNO, RNO, and IMNO-SE are reported in Table~\ref{tab:ns_results_hyper}.

\begin{table}[H]
\centering
\caption{Relative $L^2$ error of FNO, RNO and IMNO-SE for the 2D NS equation}
\label{tab:ns_results_hyper}
\begin{tabular}{c|cccc}
\toprule
& FNO & RNO  & IMNO-SE($\alpha=0.1$) & IMNO-SE($\alpha=0$) \\

\midrule
Params & 465784 & 697084 & 616294 &  616294\\
\midrule

$t_{\mathrm{end}} = 0.5$ & \(25.64 \pm 1.22\)\% & \(20.71 \pm 0.12\)\%  & \(10.46 \pm 0.67\)\% & \(10.48 \pm 0.39\)\% \\

$t_{\mathrm{end}} = 1.0$ & \(52.46 \pm 0.99\)$\%^*$ & \(24.73 \pm 1.58\)\%  & \(14.73 \pm 1.15\)\% & \(14.68 \pm 1.43\)\% \\

\bottomrule
\end{tabular}
\label{tab:NS2d}
\end{table}

To further compare the behavior of different models, we visualize the predicted
solutions for the density $\rho$ and pressure $p$ at four representative times,
$t=0.05, 0.25, 0.5$, and $1.0$, using models trained with the rollout horizon
$t_{\mathrm{end}}=1.0$. The corresponding results are shown in
Figure~\ref{fig:ns2d_rho_p_comparison}. We can clearly
observe that IMNO-SE produces significantly more accurate predictions than both RNO and
FNO across all displayed times. In particular, when $t_{\mathrm{end}}=1.0$, the FNO baseline exhibits
instability during training, which leads to substantially poorer rollout predictions in
the final model.

\begin{figure}[H]
    \centering
    \begin{subfigure}[b]{0.7\textwidth}
        \centering
        \includegraphics[width=\textwidth]{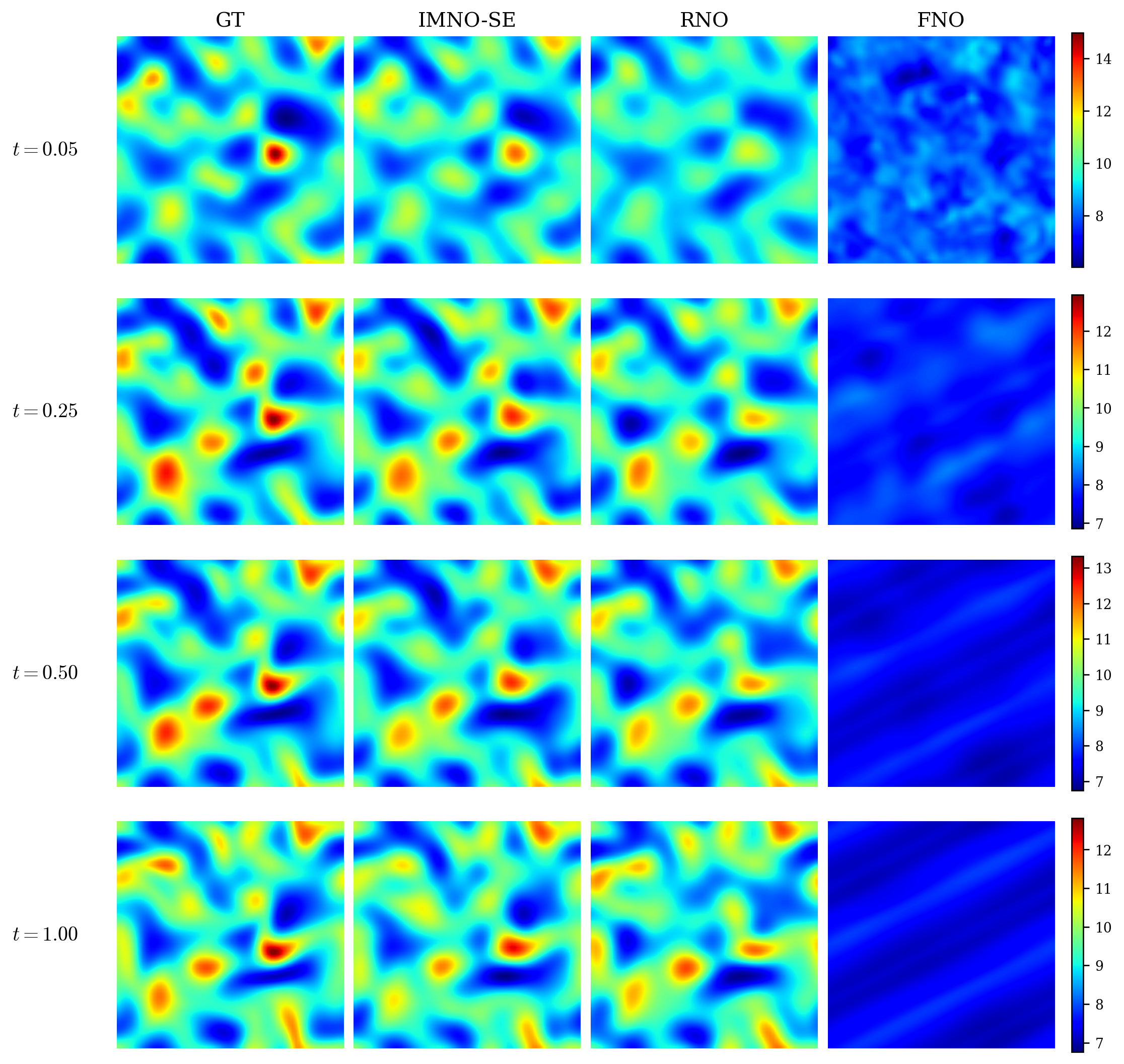}
        \caption{density $\rho$}
    \end{subfigure}
    
    \begin{subfigure}[b]{0.7\textwidth}
        \centering
        \includegraphics[width=\textwidth]{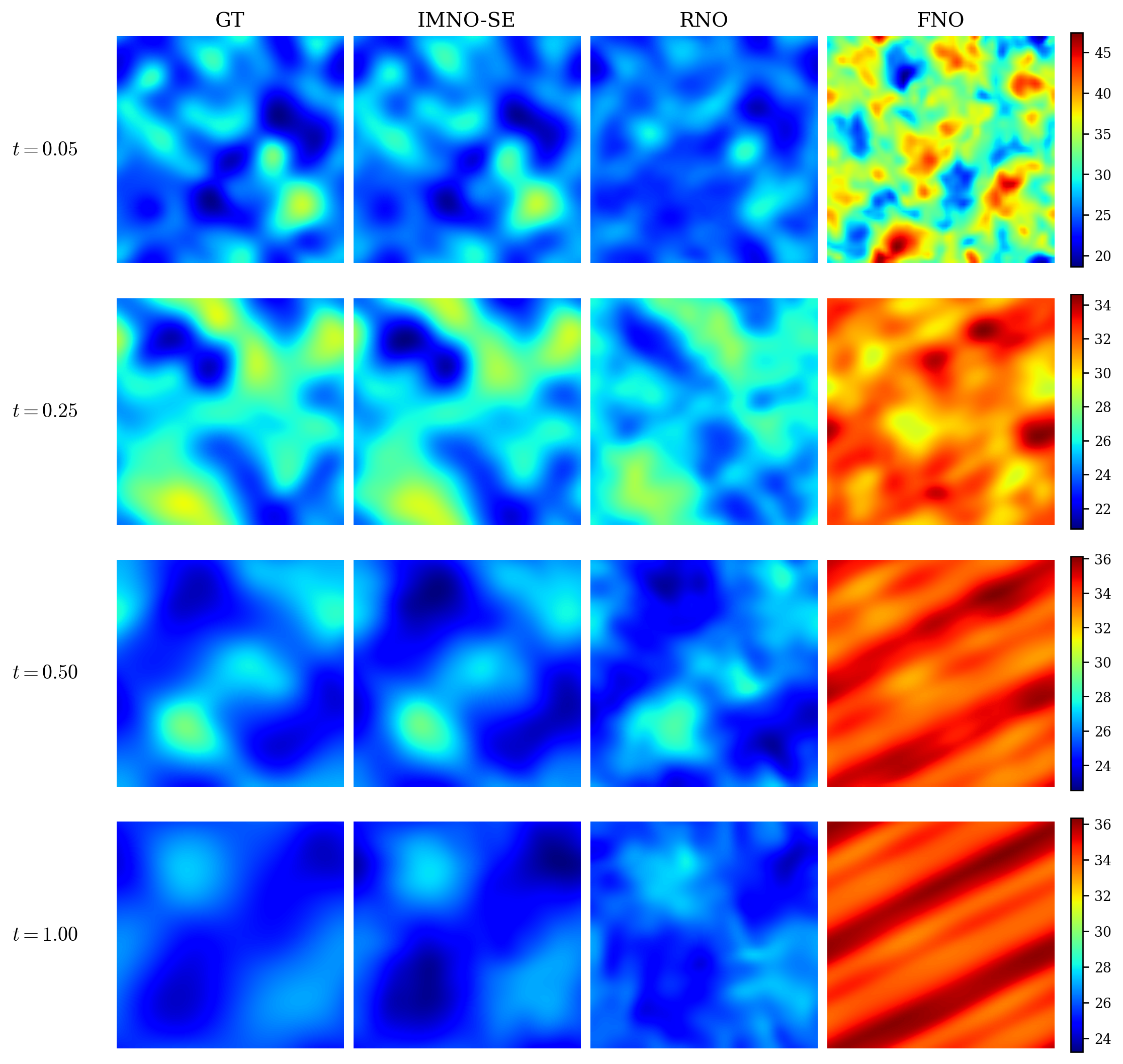}
        \caption{pressure $p$}
    \end{subfigure}
    \caption{Comparison of FNO, RNO and IMNO-SE ($\alpha=0.1$) for the 2D compressible NS equation at $t=0.05, 0.25, 0.50$, and $1.00$.}
    \label{fig:ns2d_rho_p_comparison}
\end{figure}

Although IMNO-SE achieves better predictive accuracy in this example, the learned
manifold component \(u^M\) does not clearly capture the long-time solution structure by
itself. One possible reason is that compressible Navier--Stokes dynamics involve several
strongly coupled physical variables, including velocity, density, pressure, and energy. Density and pressure variations influence the
velocity field, while the velocity field in turn transports and reshapes the density,
pressure, and energy distributions. These degrees of freedom may prevent the dynamics from being
well described by a simple, strongly attracting low-dimensional manifold. In fact, the existence of a finite-dimensional inertial manifold for the Navier--Stokes equations is still an open problem. Consequently, the learned
manifold component \(u^M\) is not expected to provide a clean standalone reduced-order
description of the full solution.

Nevertheless, the prediction of IMNO-SE is still accurate. This suggests that, even when the learned manifold component cannot by itself serve as an accurate reduced-order model of the long-time dynamics, the latent representation can still provide useful global information for the residual neural operator. Therefore, in examples where a clear
low-dimensional structure is not visually apparent, IMNO-SE can still function as an
effective neural operator with good accuracy.

\subsection{Kolmogorov Flow}

We next consider the two-dimensional incompressible Navier--Stokes equation in vorticity on the torus $\Omega = (0,2\pi)\times(0,2\pi)$:
% \begin{align}
% \partial_t w(x,t) + u(x,t)\cdot \nabla w(x,t) &= \nu \Delta w(x,t) + f(x),
% &\qquad x\in(0,2\pi)^2,\ t\in(0,T], \\
% \nabla \cdot u(x,t) &= 0, &\qquad x\in(0,2\pi)^2,\ t\in[0,T], \\
% w(x,0) &= w_0(x), &\qquad x\in(0,2\pi)^2,
% \end{align}
\begin{align}
\partial_t w(x,t) + u(x,t)\cdot \nabla w(x,t) &= \nu \Delta w(x,t) + f(x), \\
\nabla \cdot u(x,t) &= 0, \\
w(x,0) &= w_0(x),
\end{align}
where $u$ is the velocity field, $w=\nabla \times u$ is the scalar vorticity, $\nu>0$ is the viscosity, and $f$ is the forcing term. 
We formulate the learning problem in terms of vorticity rather than velocity. The vorticity formulation often provides a more informative and physically compact description of the flow dynamics. Given $w$, the velocity field $u$ can be recovered through the Biot--Savart law.

In this example, we consider a two-dimensional Kolmogorov flow. 
Specifically, the initial vorticity field $w_0(x)$ is sampled from the Gaussian measure
\[
w_0 \sim \mu, \qquad \mu = \mathcal{N}\bigl(0, (-\Delta +I)^{-2.5}\bigr),
\]
with periodic boundary conditions on the torus. We consider viscosity $\nu=0.1$,
the external forcing is fixed as
\[
f(x) = -0.1\cos x_2.
\]

For the training procedure, we employ a dataset consisting of
$N_{\mathrm{data}} = 1000$ samples, each resolved on a spatial grid of
$N_x = N_y= 64$ points with spacing $\Delta x=\Delta y=\frac{\pi}{32}$.
In the temporal direction, the solution is sampled every $\Delta t = 1.0$,
yielding $N_t = 21$ snapshots.  

% As the viscosity decreases, the dynamics become increasingly complex and less dissipative over long horizons,
In the Kolmogorov flow setting considered here, the equation is shift-equivariant in the $x$-direction but not in the $y$-direction. Accordingly, IMNO-SE is implemented with shift equivariance imposed only along the $x$-direction, and the FNO and RNO baselines are also taken to be their $x$-shift-equivariant versions.
We train and evaluate the models under two rollout horizons ($t_{\mathrm{end}}=10$ and $t_{\mathrm{end}}=20$), and report the
results in Table~\ref{tab:ns_results_kolmogorov}.

\begin{table}[H]
\centering
\caption{Relative $L^2$ error of FNO, RNO and IMNO-SE for the 2D incompressible NS equation}
\label{tab:ns_results_kolmogorov}
\begin{tabular}{c|cccc}
\toprule
& FNO & RNO & IMNO-SE($\alpha=0.1$) & IMNO-SE($\alpha=0$)\\ 
\midrule
Params & 465357 & 696657 & 578146 & 578146\\
\midrule
$t_{\mathrm{end}}=10$ & \(3.43 \pm 0.08\)\% & \(6.13 \pm 0.11\)\% &  \(3.76 \pm 0.12\)\% & \(2.96 \pm 0.02\)\%\\
$t_{\mathrm{end}}=20$ & \(3.10 \pm 0.11\)\% &  \(5.16 \pm 0.06\)\% & \(3.94 \pm 0.60\)\% & \(2.75 \pm 0.11\)\%\\ 
\bottomrule
\end{tabular}
\end{table}

% ToDO: We observe that this benchmark exhibits a clear dependence on both the viscosity and the amount of training data. 
% When sufficient training data are available, the prediction error is substantially reduced; when the data are limited and the viscosity is small, all methods suffer noticeable degradation. 
% In this regime, the main advantage of IMNO is not necessarily that it reconstructs an obvious manifold component, but rather that the latent variable $h$ provides a compact global summary of the large-scale flow structure, which improves the conditioning of the residual operator and can lead to better long-horizon stability.

\begin{figure}[H]
\centering
\begin{subfigure}[b]{0.49\textwidth}
    \centering
    \includegraphics[width=\textwidth]{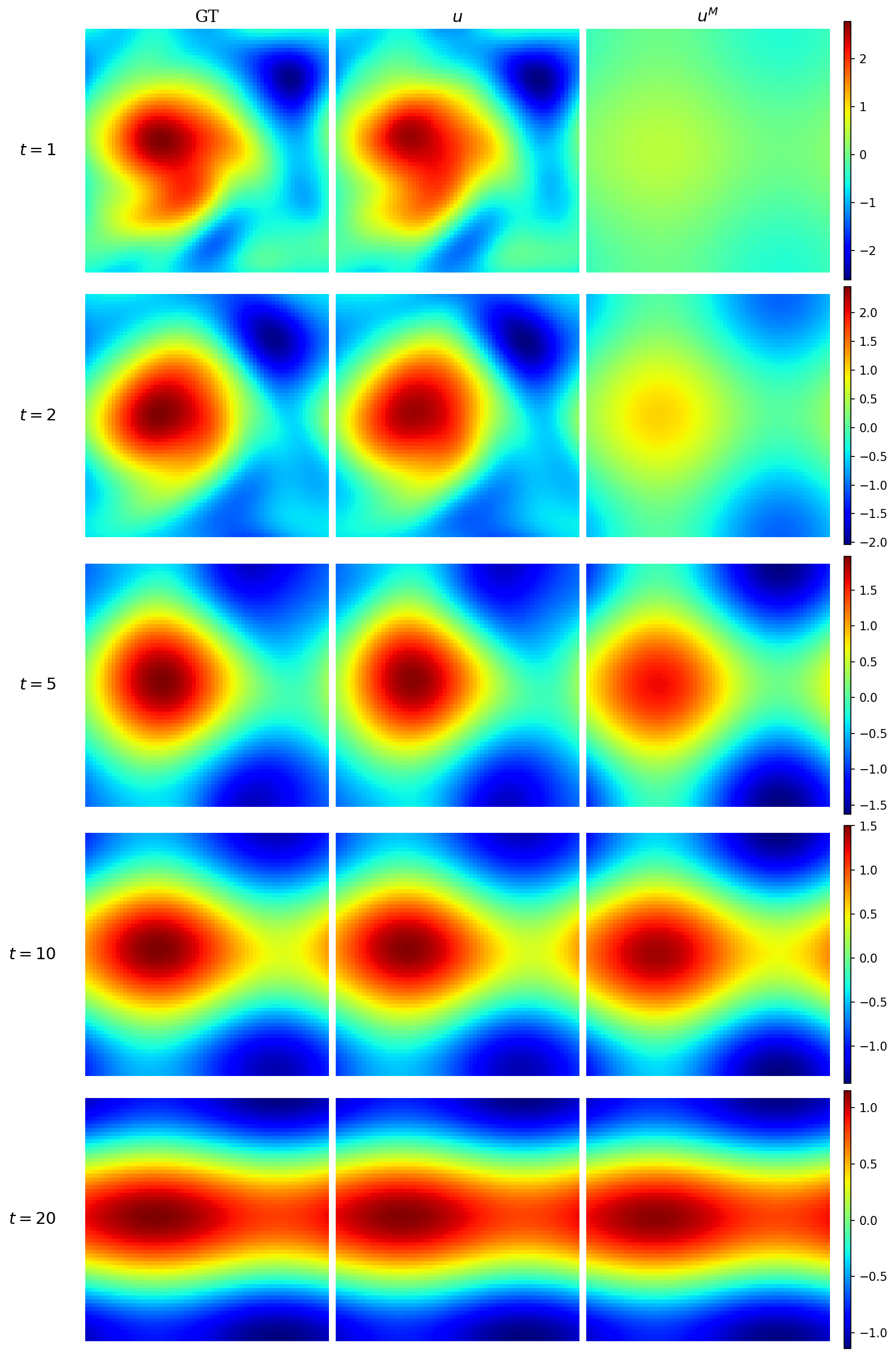}
    \caption{Example 1.}
\end{subfigure}
\hfill
\begin{subfigure}[b]{0.49\textwidth}
    \centering
    \includegraphics[width=\textwidth]{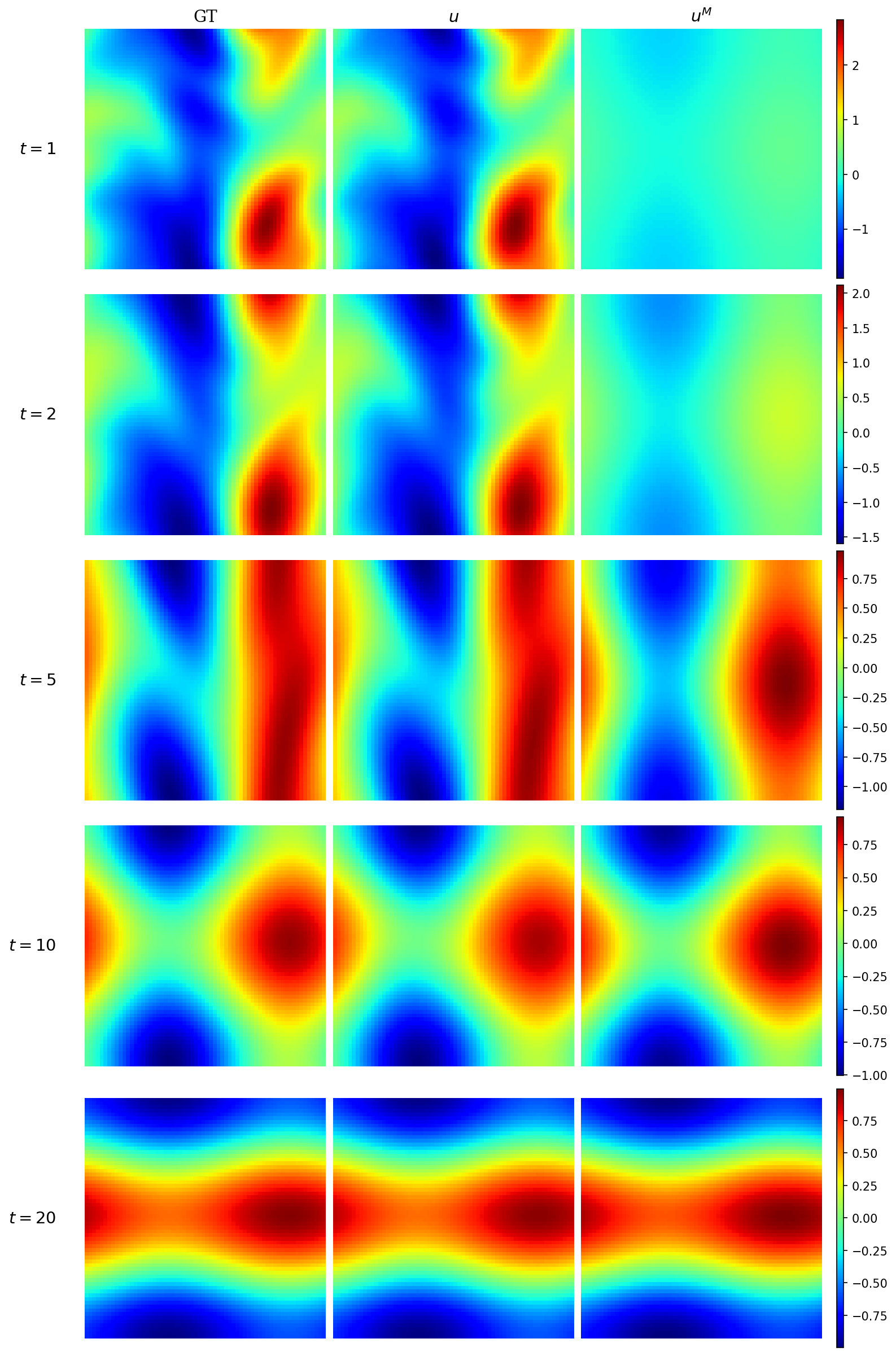}
    \caption{Example 2.}
\end{subfigure}
\caption{IMNO-SE ($\alpha=0.1$) predictions for the incompressible 2D NS equation at $t = 1, 2, 5, 10, 20$.}
\label{fig:2DNS_examples}
\end{figure}

Figure~\ref{fig:2DNS_examples} shows representative IMNO predictions. Consistent with the earlier examples, these results again show that IMNO captures a low-dimensional long-time structure consistent with the inertial-manifold picture. In particular, the manifold component \(u^M\) captures the dominant long-time dynamics, while the full field \(u\) remains accurate throughout the rollout interval. 
Taken together, these results indicate that IMNO remains effective in capturing the long-time dynamics in the two-dimensional setting.

We finally examine a more challenging low-viscosity regime with
\(\nu=10^{-3}\). As in the KS example, this case illustrates a limitation of
IMNO for strongly chaotic dynamics. Figure~\ref{fig:2DNS_chaotic_examples}
shows representative IMNO-SE predictions in this regime. At early times, the
full prediction \(u\) still captures the large-scale patterns of the reference
solution, although some small-scale vortical structures are already smoothed
out. At later times, however, the prediction exhibits a substantial
structural mismatch with the ground truth.

This degradation is consistent with the difficulty of learning reduced dynamics
in chaotic systems. In such regimes, the inertial manifold, if it exists, may have a substantially larger intrinsic dimension than in the previous examples. Therefore, representing the dynamics with a low-dimensional latent coordinate may discard important information and limit the accuracy of the approximation. Moreover, the learned
latent dynamics are highly sensitive to small approximation errors. These errors
can accumulate during autoregressive rollout and cause the decoded manifold
component \(u^M\) to drift away from the reference trajectory. For example, in Figure~\ref{fig:2DNS_chaotic_examples}, the early predictions still
retain the correct large-scale flow pattern, but at later times the manifold
component becomes increasingly dominant in the reconstructed solution while
being misaligned with the true solution. This misalignment then contaminates the residual
correction and leads to the overall distortion of the full prediction. However, although IMNO performs less well in strongly chaotic regimes, where an inertial manifold may be very high-dimensional or may not exist, this behavior is consistent with the theory behind the model: IMNO works best precisely when the dynamics admit a meaningful low-dimensional structure.

\begin{figure}[H]
\centering
\begin{subfigure}[b]{0.49\textwidth}
    \centering
    \includegraphics[width=\textwidth]{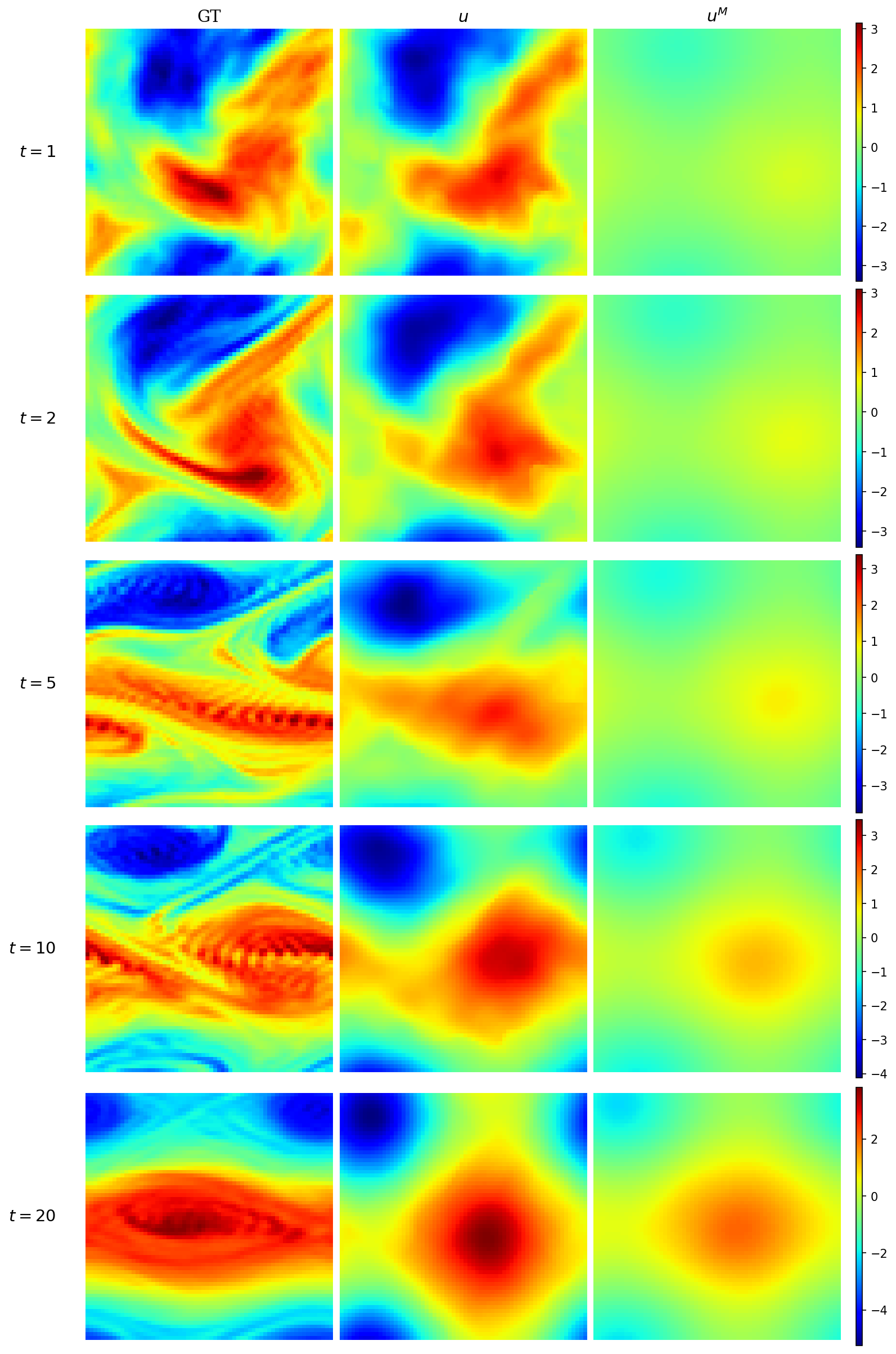}
    \caption{Example 1.}
\end{subfigure}
\hfill
\begin{subfigure}[b]{0.49\textwidth}
    \centering
    \includegraphics[width=\textwidth]{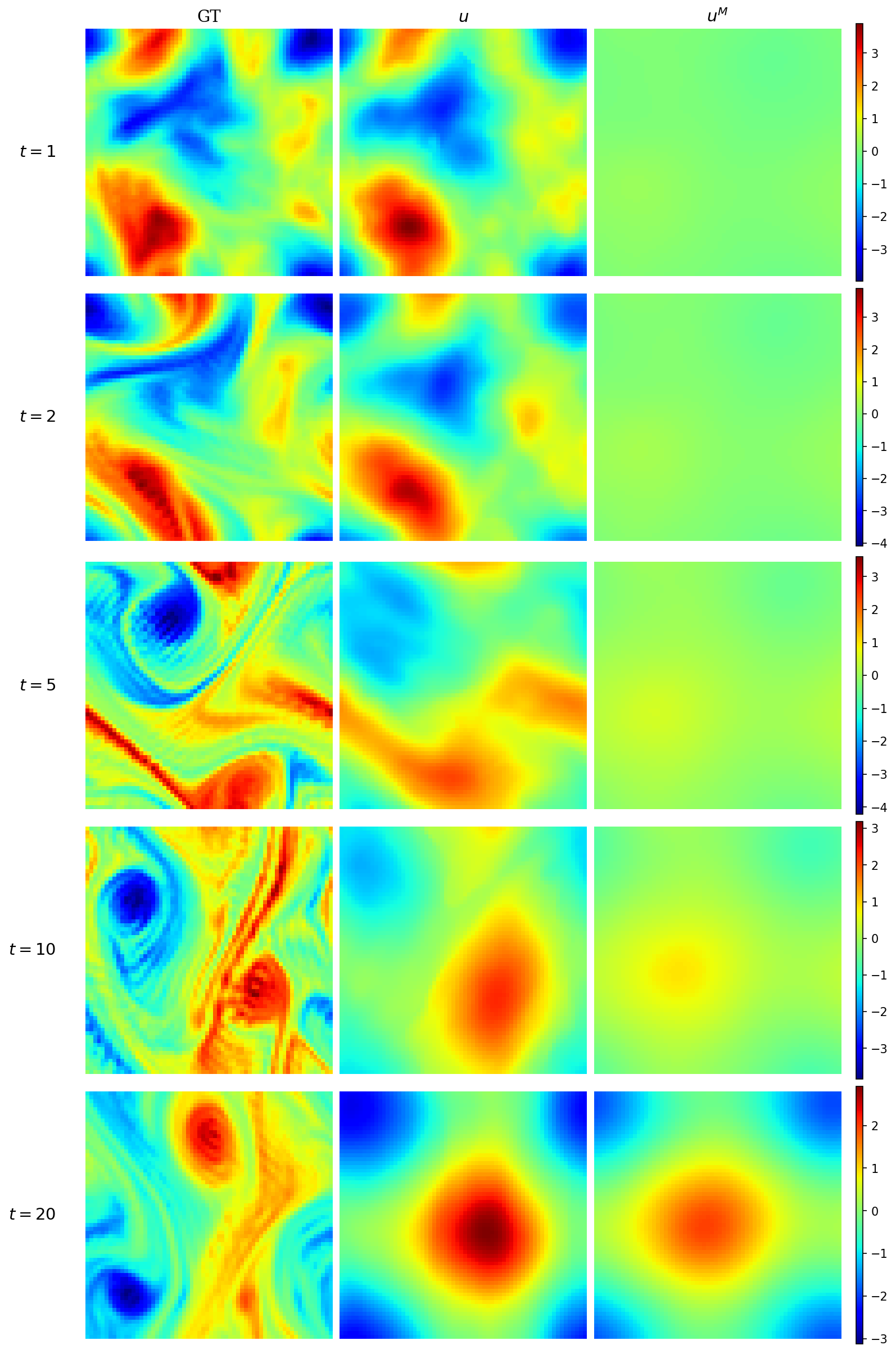}
    \caption{Example 2.}
\end{subfigure}
\caption{IMNO-SE ($\alpha=0.1$) predictions for the incompressible 2D Navier--Stokes equation at $t = 1, 2,5, 10, 20$ in the turbulence regime \(\nu=10^{-3}\).}
\label{fig:2DNS_chaotic_examples}
\end{figure}

\section{Conclusion}
In this paper, we developed the Inertial Manifold Neural Operator (IMNO), a novel neural operator framework for dissipative time-dependent PDEs. Such PDEs' long-time dynamics can often be described by some low-dimensional structure due to dissipation. Inertial manifold theory provides a natural framework for describing such dynamics, under which the solution can be decomposed into a low-dimensional manifold component governing the long-time evolution and a rapidly decaying residual. Guided by this theoretical picture, IMNO is developed to discover and exploit such latent low-dimensional structures in PDE dynamics, thereby learning a more physically interpretable representation while better preserving the intrinsic structure of the underlying PDE.

Various benchmark problems have been used to evaluate the performance of IMNO. The results show that IMNO can effectively capture the low-dimensional long-time dynamics of the underlying PDEs in many cases, while also exhibiting good training stability and accuracy. Moreover, for some more complex PDE systems, although such low-dimensional dynamics may be too difficult to capture accurately, IMNO still remains effective as a neural operator and achieves good accuracy.

An interesting observation from our experiments is that increasing $\alpha$ in the loss function, thus encouraging a larger portion of the solution to be represented by the manifold component, leads to worse overall predictive performance in many cases. One possible reason is that, when the manifold component becomes more dominant, the remaining residual may contain relatively more high-frequency content, which is empirically harder for FNO-type architectures to approximate accurately. Another possible reason is that the residual dynamics may become more tightly coupled with the latent manifold dynamics when $\alpha$ increases, while the current neural operator structure may not be expressive enough to capture this coupling efficiently. These observations suggest that future work should develop a deeper understanding of how the manifold and residual components are jointly trained and how they interact, thereby enabling the development of better architectures and loss functions that better exploit the manifold structure while maintaining accurate full-state prediction.

Beyond this issue, several other directions are also worth further investigation. For example, the performance of IMNO remains limited for systems with chaotic dynamics. In such cases, the manifold component $u^M$ may still capture some meaningful low-dimensional patterns, but errors grow rapidly due to the system's chaotic behavior. Therefore, the manifold prediction may drift during long-time rollout, which in turn affects the residual prediction and degrades the overall performance. A possible remedy is to periodically reproject the current full predicted solution onto the learned manifold, thereby updating the latent coordinate and mitigating its drift during long-time rollout.

IMNO could also be further applied to larger-scale simulations, where computational efficiency becomes a more central concern. In this setting, the learned low-dimensional manifold may serve as a cheaper surrogate for long-time evolution. For example, once the residual part has decayed to a sufficiently small level, one may continue the rollout mainly through the manifold dynamics rather than evaluating the full residual neural operator at every step. This could potentially lead to a significant speedup for large-scale autoregressive prediction.

In a broader context, we believe that this work, together with the questions it raises, highlights the potential connection between operator learning and reduced-order modeling. By bridging these two fields, it may help us better understand their respective advantages and limitations, with the potential to enrich both fields and to inspire further research in scientific machine learning.
% To address central challenge is that the residual dynamics are not autonomous, but depend on the latent dynamics; this is addressed by the proposed architecture. Under suitable assumptions, we further establish a universal approximation property for IMNO.

\section*{Data availability}
Code and data for reproducing the results in this paper will be available at \url{https://github.com/Xiaoyang-Xie/IMNO}. All numerical experiments were conducted using NVIDIA A100 and L40 GPUs.

\bibliographystyle{abbrv}
\bibliography{references}

\newpage

\section*{Appendix}\label{sec:appendix_proof_conditioned_operator}

\subsection*{A. Proof of the Universal Approximation Theorem}
This appendix proves Theorem~\ref{thm:imno-uat}. Section~A.1 fixes notation and collects the preliminary properties used throughout the proof; Section~A.2 establishes six preliminary lemmas;
Section~A.3 assembles the proof.
 
\subsubsection*{A.1 Basic settings and properties}

Throughout Appendix~A, we work under the assumptions of
Theorem~\ref{thm:imno-uat}. We first fix the notation and collect several
basic properties that will be used throughout the proof.

Recall that
\[
H=H^s(\mathbb T^d;\mathbb R^c)
\]
for some $s\ge 0$. For each $N\in\mathbb N$, let $P_N:H\to H$ denote the
orthogonal projection onto the Fourier modes satisfying $|k|_\infty\le N$,
and set
\[
Q_N:=I-P_N.
\]
We also denote by $\mathcal F_N$ the associated finite-dimensional Fourier
feature map
$$
\mathcal F_N:H\to\mathbb R^{D_N},
$$
which collects the real and imaginary parts of the Fourier coefficients of $u$ with $|k|_\infty\le N$. Here, $D_N = c(2N+1)^d$ denotes the total dimension of this truncated feature space. Conversely, let
$$
\mathcal S_N:\mathbb R^{D_N}\to P_NH
$$
denote the associated band-limited Fourier synthesis map. These maps are
chosen so that
$$
\mathcal S_N(\mathcal F_N(u))=P_Nu,
\qquad u\in H.
$$

The projections satisfy $\|P_N\|\le 1$ for all $N$, and
$P_Nu\to u$ in $H$ as $N\to\infty$ for every $u\in H$. Moreover, this convergence is uniform on compact subsets of
$H$. Indeed, if $C\subset H$ is compact and $\eta>0$, choose a finite
$\eta/3$-net $\{u_j\}_{j=1}^J$ of $C$. Since $P_Nu_j\to u_j$ for each
fixed $j$, for all sufficiently large $N$ we have
$\max_{1\le j\le J} \norm{P_Nu_j-u_j}_H<\eta/3$. For any $u\in C$, choosing
$j$ such that $\norm{u-u_j}_H<\eta/3$ gives
$$
\norm{P_Nu-u}_H
\le
\norm{P_N(u-u_j)}_H
+
\norm{P_Nu_j-u_j}_H
+
\norm{u_j-u}_H
<\eta ,
$$
using $\|P_N\|\le 1$. Hence
$$
\sup_{u\in C}\norm{P_Nu-u}_H\to 0 .
$$

Let $\mathcal{G}=S(\Delta t):H\to H$ be the one-step solution operator.
By assumption, $\mathcal{G}$ is continuous on $H$. Let
$\mathcal M\subset H$ be the finite-dimensional flow-normally hyperbolic
inertial manifold in Theorem~\ref{thm:imno-uat}, and let
$\Pi:H\to\mathcal M$ be the associated continuous asymptotic phase map.

We next state two consequences of flow-normal hyperbolicity. First, the
asymptotic phase is unique, which implies
$$
\Pi\circ\mathcal G
=
\mathcal G|_{\mathcal M}\circ\Pi.
$$
Indeed, according to the definition of $\Pi$, $S(t)\Pi(u)$ exponentially tracks $S(t)u$, then
$$
S(t)\mathcal G\Pi(u)
=
S(t+\Delta t)\Pi(u)
$$
exponentially tracks
$$
S(t)\mathcal G u
=
S(t+\Delta t)u.
$$
Thus $\mathcal G\Pi(u)$ is an asymptotic phase of $\mathcal G u$, and gives
$$
\Pi(\mathcal G u)=\mathcal G\Pi(u).
$$

Second, the restricted one-step map $\mathcal G|_{\mathcal M}$ is injective.
Indeed, suppose that $u_1^M,u_2^M\in\mathcal M$ satisfy
$\mathcal G u_1^M=\mathcal G u_2^M$. Applying the backward separation
estimate over a time interval of length $\Delta t$ gives
$$
\|u_1^M-u_2^M\|_H
\le
C e^{\gamma\Delta t}
\|\mathcal G u_1^M-\mathcal G u_2^M\|_H
=
0.
$$
Hence $u_1^M=u_2^M$, and therefore
$\mathcal G|_{\mathcal M}$ is injective.

For the compact set $K\subset H$ appearing in Theorem~\ref{thm:imno-uat}, define
$$
\mathcal M_0:=\Pi(K),
\qquad
\mathcal M_1:=\mathcal M_0\cup \mathcal{G}(\mathcal M_0).
$$
Since $K$ is compact and $\Pi$ is continuous, $\mathcal M_0$ is compact.
Since $\mathcal{G}$ is continuous, $\mathcal{G}(\mathcal M_0)$ is also
compact. Moreover, by the positive invariance of the inertial manifold,
$\mathcal{G}(\mathcal M_0)\subset\mathcal M$. Therefore
$\mathcal M_1\subset\mathcal M$ is compact. These compact sets collect the asymptotic phases of the inputs in $K$ and their images after one time step on the manifold.

Since $\mathcal M_1$ is a compact subset of the $m$-dimensional inertial manifold
$\mathcal M$, its topological dimension is at most $m$. By the
Menger--Nöbeling embedding theorem, $\mathcal M_1$ admits a topological embedding
into $\mathbb R^{2m+1}$. Therefore, if the latent dimension satisfies
$d_h\ge 2m+1$, we can fix a continuous injective map
\[
    E:\mathcal M_1\to \mathbb R^{d_h}.
\]
Since $\mathcal M_1$ is compact and $\mathbb R^{d_h}$ is Hausdorff, $E$ is
a homeomorphism from $\mathcal M_1$ onto its image. Define
\[
    \Lambda_0:=E(M_0),
    \qquad
    \Lambda_1:=E(\mathcal{G}(M_0)).
\]
Then
\[
    E(M_1)=\Lambda_0\cup\Lambda_1.
\]
We write
\[
    D:=E^{-1}:
    \Lambda_0\cup\Lambda_1\to M_1
\]
for the inverse map, which is continuous.

For $u\in K$, define
$$
h(u):=E(\Pi(u)),
\qquad
r(u):=u-\Pi(u).
$$
Since $\Pi$, $E$, and the identity map on $H$ are continuous, both
$h:K\to\mathbb R^{d_h}$ and $r:K\to H$ are continuous. Therefore
$$
C_K
:=
{(r(u),h(u)):u\in K}
\subset H\times\mathbb R^{d_h}
$$
is compact.

We next define the exact one-step map in latent coordinates:
\[
    F:\Lambda_0\to \Lambda_1,
    \qquad
    F(h)
    :=
    E\bigl(\mathcal{G}(D(h))\bigr).
\]
Indeed, if $h\in\Lambda_0$, then $D(h)\in M_0$, and hence
$\mathcal{G}(D(h))\in \mathcal{G}(M_0)$. Therefore
$F(h)\in\Lambda_1$.

The map $F$ is continuous. Moreover, since $D$ and
$E$ are injective and $\mathcal{G}|_{\mathcal M}$ is injective,
$F$ is injective on $\Lambda_0$. Hence $F$ is a continuous
bijection from the compact set $\Lambda_0$ onto the compact set
$\Lambda_1$. Since $\Lambda_1\subset\mathbb R^{d_h}$ is Hausdorff,
$F$ is a homeomorphism from $\Lambda_0$ to $\Lambda_1$.

Next, define the exact one-step residual map
$\mathcal R_0:H\times\Lambda_0\to H$ by
$$
\mathcal R_0(u^R,h)
:=
\mathcal{G}(D(h)+u^R)
-
\Pi\bigl(\mathcal{G}(D(h)+u^R)\bigr).
$$
Since $D$, $\mathcal G$, and
$\Pi$ are continuous, and addition is continuous on $H$,
the map $\mathcal R_0$ is continuous.

For every $u\in K$, we have
$$
D(h(u))
=
D(E(\Pi(u)))
=
\Pi(u),
$$
and hence
$$
\mathcal R_0(r(u),h(u))
=
\mathcal{G}(u)-\Pi(\mathcal{G}(u)).
$$
Thus $\mathcal R_0$ gives the exact next-step residual when evaluated at the current residual and the current latent coordinate.

We now specify the IMNO architecture class used in the approximation theorem. In the proof below, when we refer to an IMNO model, we mean a model consisting of the following components:
\begin{itemize}
\item \textbf{Encoder.}
The encoder has the form
$$
E_\theta(u)=\varphi_\theta(\mathcal F_{k_E}(u)),
$$
where $\mathcal F_{k_E}(u)\in\mathbb R^{D_{k_E}}$ collects the real and
imaginary parts of the Fourier coefficients of $u$ with
$|k|_\infty\le k_E$, and $\varphi_\theta$ is a MLP.
The truncation level $k_E$ is a free hyperparameter.

\item \textbf{Decoder.}
The decoder has the form
$$
    D_\theta(h)=\mathcal S_{k_D}(\psi_\theta(h)),
$$
where $\psi_\theta$ is an MLP producing the real and imaginary parts of
the Fourier coefficients for modes $|k|_\infty\le k_D$, and
$\mathcal S_{k_D}$ denotes the corresponding band-limited Fourier
synthesis map. The truncation level $k_D$ is also free.

\item \textbf{Latent dynamics.}
The latent dynamics are represented by an explicit one-step update
$$
    h_{t+1}=h_t+\Delta t\, f_\theta(h_t),
$$
where $f_\theta:\mathbb R^{d_h}\to\mathbb R^{d_h}$ is an MLP.

\item \textbf{Residual operator.}
The residual dynamics are represented by a conditioned neural operator
$$
    \mathcal{R}_\theta(\cdot,h)
    =
    \mathcal Q\circ\mathcal L_L^{(h)}\circ\cdots\circ\mathcal L_1^{(h)}\circ \mathcal P ,
$$
where each hidden layer has the form
$$
   \mathcal L_\ell^{(h)}(v)(x)
    =
    \sigma\bigl(
        W_\ell v(x)
        +
        \beta_\ell(h)
        +
        (\mathcal K_\ell v)(x)
    \bigr).
$$
Here $\mathcal P$ and $\mathcal Q$ are pointwise lifting and projection maps, $W_\ell$ is a
pointwise linear transformation, $\mathcal K_\ell$ is a Fourier integral
operator, and $\beta_\ell$ is a learnable affine map
from the latent coordinate to the channel space. The width, depth, and
number of retained Fourier modes are free parameters.

\end{itemize}

Throughout the proof, the activation function $\sigma$ is chosen so that
the standard finite-dimensional universal approximation theorem for MLPs
applies.
% Whenever identity channels are needed in the construction,
% we use the fact that the identity function can be represented exactly or
% approximated uniformly on compact intervals by such MLPs.

\begin{remark}[Encoder used in the experiments]
The theoretical encoder class in the proof applies a nonlinear MLP after Fourier truncation. This generality is needed for the universal approximation theorem, since the target phase-coordinate map
$u\mapsto E(\Pi(u))$ is a continuous map on $K$ and need not be linear.

In the implementation, we use a lighter encoder. The input is first lifted
pointwise to a higher-dimensional channel space, low Fourier modes are then
extracted, and a final linear map produces the latent coordinate. The resulting
encoder is therefore linear and is less expressive than the nonlinear MLP class
used in the theorem. Nevertheless, in our experiments, this simpler variant
often achieves accuracy comparable to, or even better than, variants with an
additional nonlinear layer, while also improving training stability. It may be
viewed as learning a data-adapted linear approximation to the target
phase-coordinate map over the region sampled by the training data.
% This can be partially
% explained by the structure of graph-based inertial-manifold constructions. In
% the standard spectral-gap setting, the inertial manifold is often obtained as
% a graph over finitely many low Fourier modes,
% $$
% \mathcal M={p+\phi(p):p\in P_nH}.
% $$
% In this case, the manifold coordinate map may be chosen as
% $$
% E=P_n.
% $$
% Thus the exact encoder target is
% $$
% h(u)=E(\Pi(u))=P_n\Pi(u).
% $$
% Although this phase-coordinate map is generally nonlinear, the retraction
% property gives $\Pi(u)=u$ on the inertial manifold. Therefore, when $\Pi$ is sufficiently regular near the manifold, it is close to the identity
% in the neighborhood of the manifold, and the target coordinate map
% $h(u)=E(\Pi(u))$ behaves approximately linearly in it. 
However, this compact structure
may still be limited in some regimes, and incorporating appropriate
nonlinearity into the encoder is an interesting direction for future work.

\end{remark}
 
\subsubsection*{A.2 Preliminary lemmas}

We first recall the universal approximation theorem of Fourier neural
operators, which will be used later. The proof
can be found in~\cite{kovachki2021universal}.

\begin{lemmaA}[Universal approximation by FNOs]\label{lem}
Let $s,s'\ge 0$. Let
$$
\mathcal G:H^s(\mathbb T^d;\mathbb R^{d_a})
\to
H^{s'}(\mathbb T^d;\mathbb R^{d_u})
$$
be a continuous operator. Let
$K\subset H^s(\mathbb T^d;\mathbb R^{d_a})$ be a compact subset.
Then, for any $\varepsilon>0$, there exists a Fourier neural operator
$$
\mathcal N:H^s(\mathbb T^d;\mathbb R^{d_a})
\to
H^{s'}(\mathbb T^d;\mathbb R^{d_u}),
$$
of the standard FNO form, continuous as an operator from
$H^s(\mathbb T^d;\mathbb R^{d_a})$ to
$H^{s'}(\mathbb T^d;\mathbb R^{d_u})$, such that
$$
\sup_{a\in K}
\norm{\mathcal G(a)-\mathcal N(a)}_{H^{s'}}
\le
\varepsilon .
$$
\end{lemmaA}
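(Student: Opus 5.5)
The plan follows the strategy of \cite{kovachki2021universal}. We reduce to a finite-dimensional approximation problem through Fourier truncation, solve that problem with an MLP, and then show that an FNO can emulate the three operations involved: Fourier analysis, the MLP, and Fourier synthesis.

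\emph{Step 1 (truncation).} Let $\mathcal G$ and $K$ be as in the lemma. The maps $a\mapsto \mathcal G(P_Na)$ are continuous. The set $K\cup\bigcup_N P_NK$ is compact, by the uniform convergence $P_N\to I$ on compact sets recalled in A.1. Uniform continuity of $\mathcal G$ on this compact set therefore gives
\[
\sup_{a\in K}\|\mathcal G(a)-\mathcal G(P_Na)\|_{H^{s'}}<\varepsilon/4
\]
for $N$ large. The set $\mathcal G(K\cup\bigcup_N P_NK)$ is compact in $H^{s'}$, so we may also choose $N$ with $\|Q_N\mathcal G(P_N a)\|_{H^{s'}}<\varepsilon/4$ uniformly on $K$. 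The target thus becomes $\mathcal S_N\circ g\circ\mathcal F_N$, where
\[
g:=\mathcal F_N\circ\mathcal G\circ\mathcal S_N:\mathbb R^{D_N}\to\mathbb R^{D_N}
\]
is continuous on the compact set $\mathcal F_N(K)$.

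\emph{Step 2 (finite-dimensional approximation).} By the classical universal approximation theorem, there is an MLP $\hat g$ with $\sup_{\mathcal F_N(K)}|g-\hat g|<\delta$. Since $\mathcal S_N$ is a bounded linear map into $H^{s'}$, with norm $\lesssim N^{s'}$, choosing $\delta$ small controls this contribution by $\varepsilon/4$.

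\emph{Step 3 (emulation by an FNO).} This is the core of the argument. The lifting layer $\mathcal P$ appends the coordinate $x$ and builds the channels $\cos(k\cdot x)$ and $\sin(k\cdot x)$ for $|k|_\infty\le N$, approximately, through a pointwise MLP. A pointwise MLP then forms the products $a(x)\cos(k\cdot x)$ and $a(x)\sin(k\cdot x)$. A spectral convolution that keeps only the $k=0$ mode maps each of these channels to its spatial mean. This yields constant-in-$x$ channels carrying $\mathcal F_N(a)$. Subsequent layers with $\mathcal K_\ell=0$ act pointwise and implement $\hat g$ on these constant channels. Finally, multiplying the output coefficients by the retained trigonometric channels and summing in $\mathcal Q$ produces $\mathcal S_N\hat g(\mathcal F_N a)$. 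All quantities range over compact sets of values, so products and the identity can be approximated uniformly by MLPs.

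\emph{Main obstacle.} The difficulty is measuring the emulation error in $H^{s'}$ and $H^s$ rather than in sup-norm. First, the input $a\in H^s$ may not be pointwise bounded, so the analysis step is carried out at the level of Fourier coefficients. The spectral layer's truncation acts as a bounded operator on $H^s$. We then compute products only after a first spectral layer has projected $a$ to $P_Na$, which is smooth and uniformly bounded on $K$. Second, the output must be close in $H^{s'}$. Here we make the last operation before $\mathcal Q$ a band-limited spectral convolution, or we use a smooth activation and MLP approximations in $C^{\lceil s'\rceil}$ on compact sets. Either way, derivative errors are controlled by the $C^k$-universality of MLPs with smooth $\sigma$, together with Bernstein-type bounds for trigonometric polynomials of degree $N$. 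Summing the four $\varepsilon/4$ contributions gives the claim. Continuity of the resulting FNO as a map $H^s\to H^{s'}$ follows because it is a composition of continuous pieces: truncation, which is bounded linear, followed by smooth finite-dimensional maps and synthesis.
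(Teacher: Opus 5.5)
Your plan is the truncation--MLP--emulation argument of \cite{kovachki2021universal}, which is exactly what the paper relies on for this lemma (it gives no proof of its own beyond that citation), so your approach is the same. One caveat: the paper's standard form feeds the raw coordinate $x$, which jumps across the seam of $\mathbb T^d$, so only your first remedy for the $H^{s'}$ issue is valid. That remedy is to form the synthesis sum, apply a band-limited spectral projection, and then pass through $\sigma$ as a near-identity controlled in $C^{\lceil s'\rceil}$ via Bernstein bounds, with $\mathcal Q$ linear. Your second remedy, building the trigonometric channels from $x$ with $C^{\lceil s'\rceil}$-accurate MLPs, does not enforce periodicity and leaves a jump discontinuity in the output, which is then not in $H^{s'}$ for $s'\ge 1/2$.
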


In the main proof, we will use two elementary technical tools repeatedly. The first is a continuous extension result.

\begin{lemmaA}[Continuous extension]\label{lem:extension}
Let $X$ be a metric space, let $A\subset X$ be closed, and let $Y$ be a
Banach space. If $f:A\to Y$ is continuous, then there exists a continuous
map $\bar f:X\to Y$ such that
$$
\bar f|_A=f .
$$
\end{lemmaA}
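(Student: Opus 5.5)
We prove this by Dugundji's extension theorem, building $\bar f$ explicitly from a partition of unity on $X\setminus A$. The only feature of $Y$ used is that it is a normed vector space, so convex combinations of values of $f$ make sense. If $A=\emptyset$, take $\bar f\equiv 0$. If $A=X$, there is nothing to prove. So assume $\emptyset\ne A\ne X$, and write $\delta(x):=d(x,A)$. Since $A$ is closed, $\delta(x)>0$ exactly when $x\in X\setminus A$.

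\emph{Construction.} The open set $U:=X\setminus A$ is covered by the balls $B\bigl(y,\delta(y)/2\bigr)$, $y\in U$. As a metric space, $U$ is paracompact (Stone's theorem), so this cover has a locally finite open refinement $\{U_i\}_{i\in I}$ with a subordinate continuous partition of unity $\{\varphi_i\}_{i\in I}$ on $U$. For each $i$, pick $a_i\in A$ with $d(a_i,U_i)\le 2\,d(U_i,A)$; this is possible because $d(U_i,A)>0$. Define
\[
\bar f(x):=f(x)\ \ (x\in A),\qquad \bar f(x):=\sum_{i\in I}\varphi_i(x)\,f(a_i)\ \ (x\in U).
\]
On $U$ the sum is locally finite, so $\bar f$ is continuous on the open set $U$. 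Since $\bar f|_A=f$ and $f$ is continuous on $A$, it remains only to check continuity of $\bar f$ at each point $a\in A$ along sequences or nets in $U$.

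\emph{Key geometric estimate.} This is the main step. Let $x\in U$ and let $i$ be such that $\varphi_i(x)>0$, so $x\in U_i\subset B\bigl(y,\delta(y)/2\bigr)$ for some $y\in U$.
\begin{itemize}
\item From $\delta(y)\le d(y,x)+\delta(x)\le \delta(y)/2+\delta(x)$ we get $\delta(y)\le 2\delta(x)$, and hence $\operatorname{diam}U_i\le \delta(y)\le 2\delta(x)$.
\item Since $x\in U_i$, we have $d(U_i,A)\le\delta(x)$, so $d(a_i,U_i)\le 2\delta(x)$.
\item Combining these, $d(a_i,x)\le d(a_i,U_i)+\operatorname{diam}U_i\le 4\delta(x)$.
\end{itemize}
Now fix $a\in A$. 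Then $\delta(x)\le d(x,a)$, so
\[
d(a_i,a)\le d(a_i,x)+d(x,a)\le 5\,d(x,a)\quad\text{for every } i \text{ with } \varphi_i(x)>0 .
\]

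\emph{Continuity at $a\in A$.} Let $\varepsilon>0$. By continuity of $f$ at $a$, there is $\rho>0$ with $\|f(b)-f(a)\|_Y<\varepsilon$ for all $b\in A$ with $d(b,a)<\rho$.

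For $x\in A$ with $d(x,a)<\rho$, we have $\|\bar f(x)-f(a)\|_Y=\|f(x)-f(a)\|_Y<\varepsilon$ directly.

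For $x\in U$ with $d(x,a)<\rho/5$, the key estimate gives $d(a_i,a)<\rho$ for every $i$ with $\varphi_i(x)>0$. Since $\sum_i\varphi_i(x)=1$ and the $\varphi_i$ are nonnegative,
\[
\|\bar f(x)-f(a)\|_Y=\Bigl\|\sum_i\varphi_i(x)\bigl(f(a_i)-f(a)\bigr)\Bigr\|_Y\le\sum_i\varphi_i(x)\,\|f(a_i)-f(a)\|_Y<\varepsilon .
\]
Hence $\bar f$ is continuous at $a$, and therefore on all of $X$.

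The only delicate point is the geometric estimate in the second step, which ties the chosen anchor points $a_i$ to the distance $d(x,a)$. Everything else is a routine partition-of-unity argument.
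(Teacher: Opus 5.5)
Your proof is correct and takes the same route as the paper, which simply invokes the Dugundji extension theorem without proof; you have written out the standard proof of that theorem (Stone paracompactness of $X\setminus A$, anchor points $a_i$ with $d(a_i,U_i)\le 2\,d(U_i,A)$, and the estimate $d(a_i,a)\le 5\,d(x,a)$), and each estimate checks out. The one step you assert without justification, $d(U_i,A)>0$, follows from your first bullet: every $x\in U_i\subset B(y,\delta(y)/2)$ satisfies $\delta(x)\ge\delta(y)/2$, so $d(U_i,A)\ge\delta(y)/2>0$ (any empty $U_i$ can be discarded).
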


This is the Dugundji extension theorem \cite{dugundji1951extension}. In what follows, we use Lemma~\ref{lem:extension} to extend
continuous maps originally defined only on compact subsets, such as $K$,
$\Lambda$, or $C_K$, to continuous maps on the ambient spaces.

The second tool is a relative form of uniform continuity. In an infinite-dimensional
space, a closed neighborhood of a compact set need not be compact. The
following lemma allows us to control a continuous map near a compact set
without requiring compactness of such neighborhoods.

\begin{lemmaA}[Uniform continuity relative to a compact set]\label{lem:ruc}
Let $(X,d_X)$ and $(Y,d_Y)$ be metric spaces, let $T:X\to Y$ be continuous,
and let $C\subset X$ be compact. Then, for every $\eta>0$, there exists
$\delta>0$ such that, for all $x\in C$ and all $y\in X$ satisfying
$d_X(x,y)<\delta$, one has
$$
d_Y(Tx,Ty)<\eta .
$$
\end{lemmaA}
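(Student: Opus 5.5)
The statement to prove is Lemma~\ref{lem:ruc}, the relative uniform continuity lemma. I will argue by contradiction, using sequential compactness of $C$. This avoids needing compactness of any neighborhood of $C$.

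Suppose the conclusion fails for some $\eta>0$. Then for every $n\in\mathbb N$, taking $\delta=1/n$, there exist $x_n\in C$ and $y_n\in X$ with
\[
d_X(x_n,y_n)<1/n
\qquad\text{and}\qquad
d_Y(Tx_n,Ty_n)\ge\eta .
\]
Since $C$ is a compact metric space, it is sequentially compact. Hence a subsequence, still denoted $(x_n)$, converges to some $x\in C$. By the triangle inequality,
\[
d_X(y_n,x)\le d_X(y_n,x_n)+d_X(x_n,x)\to 0,
\]
so $y_n\to x$ as well. This is the key point: only the sequence $x_n$ needs a convergent subsequence, because the $y_n$ are then forced to the same limit.

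Continuity of $T$ at the single point $x$ gives $Tx_n\to Tx$ and $Ty_n\to Tx$. Therefore
\[
d_Y(Tx_n,Ty_n)\le d_Y(Tx_n,Tx)+d_Y(Tx,Ty_n)\to 0,
\]
which contradicts $d_Y(Tx_n,Ty_n)\ge\eta$ for all $n$. This proves the lemma.

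There is no serious obstacle here; the only subtlety is noting that compactness is used only on $C$, never on a neighborhood of $C$.

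An alternative direct proof uses a finite subcover. For each $x\in C$, continuity of $T$ gives $r_x>0$ such that $d_X(x,z)<r_x$ implies $d_Y(Tx,Tz)<\eta/2$. Cover $C$ by finitely many balls $B(x_i,r_{x_i}/2)$, $i=1,\dots,J$, and set $\delta=\min_i r_{x_i}/2$. Given $x\in C$ and $y\in X$ with $d_X(x,y)<\delta$, pick $i$ with $x\in B(x_i,r_{x_i}/2)$. Then both $x$ and $y$ lie in $B(x_i,r_{x_i})$, so
\[
d_Y(Tx,Ty)\le d_Y(Tx,Tx_i)+d_Y(Tx_i,Ty)<\eta/2+\eta/2=\eta .
\]
Either argument suffices; I would present the sequential one.
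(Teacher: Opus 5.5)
Your proof is correct and is essentially the paper's argument: assume failure, pick $x_n\in C$ and $y_n\in X$ with $d_X(x_n,y_n)<1/n$ and $d_Y(Tx_n,Ty_n)\ge\eta$, extract $x_{n_k}\to x\in C$, deduce $y_{n_k}\to x$, and contradict continuity of $T$ at $x$. Your alternative finite-subcover argument is also valid, since $d_X(y,x_i)<\delta+r_{x_i}/2\le r_{x_i}$.
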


\begin{proof}
Suppose the claim fails. Then there exist $\eta>0$, sequences
$x_n\in C$ and $y_n\in X$, such that
$$
d_X(x_n,y_n)<\frac1n,
\qquad
d_Y(Tx_n,Ty_n)\ge \eta .
$$
Since $C$ is compact, after passing to a subsequence we may assume that
$x_n\to x$ for some $x\in C$. Since $d_X(x_n,y_n)\to0$, we also have
$y_n\to x$. By continuity of $T$ at $x$, both $Tx_n$ and $Ty_n$ converge to
$Tx$, and hence $d_Y(Tx_n,Ty_n)\to0$, a contradiction.
\end{proof}

We next prove that the encoder class used in the theoretical IMNO architecture
can approximate any continuous target map on the compact
input set $K$. The key point is that Fourier truncation converges uniformly on
compact subsets of $H$, so the infinite-dimensional approximation problem can
be reduced to a finite-dimensional MLP approximation problem.

\begin{lemmaA}[Encoder approximation]\label{lem:enc}
Let $g:K\to\mathbb R^{d_h}$ be continuous on the compact set
$K\subset H$. Then, for every $\eta>0$, there exist $k_E\in\mathbb N$
and an MLP $\varphi:\mathbb R^{D_{k_E}}\to\mathbb R^{d_h}$ such that
$$
\sup_{u\in K}
\norm{
\varphi(\mathcal F_{k_E}(u))-g(u)
}
<\eta .
$$
\end{lemmaA}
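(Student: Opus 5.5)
The plan is to reduce the infinite-dimensional problem to a finite-dimensional one through Fourier truncation, and then invoke the classical universal approximation theorem for MLPs. By Lemma~\ref{lem:extension}, with $X=H$ and $A=K$ (closed because compact), we first replace $g$ by a continuous map $\bar g:H\to\mathbb R^{d_h}$ with $\bar g|_K=g$. The set $C:=K\cup\bigcup_{N}P_NK$ need not be compact in an obvious way, so I will not rely on it. Instead, I will work with the compact set $K$ and the map $\bar g$ directly.

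\textbf{Step 1 (truncation).} Apply Lemma~\ref{lem:ruc} to $T=\bar g$, the compact set $K$, and tolerance $\eta/2$. This gives a $\delta>0$ such that $\|\bar g(u)-\bar g(v)\|<\eta/2$ whenever $u\in K$ and $\|u-v\|_H<\delta$. By the uniform convergence $\sup_{u\in K}\|P_Nu-u\|_H\to0$ established in A.1, we can fix $k_E$ with $\sup_{u\in K}\|P_{k_E}u-u\|_H<\delta$. Then
\[
\sup_{u\in K}\|\bar g(P_{k_E}u)-g(u)\|<\eta/2 .
\]

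\textbf{Step 2 (finite-dimensional target).} Since $P_{k_E}u=\mathcal S_{k_E}(\mathcal F_{k_E}(u))$, we have $\bar g(P_{k_E}u)=G(\mathcal F_{k_E}(u))$, where $G:=\bar g\circ\mathcal S_{k_E}:\mathbb R^{D_{k_E}}\to\mathbb R^{d_h}$. The synthesis map $\mathcal S_{k_E}$ is linear and finite-dimensional, hence continuous, so $G$ is continuous. The set $\Gamma:=\mathcal F_{k_E}(K)\subset\mathbb R^{D_{k_E}}$ is compact, because $\mathcal F_{k_E}$ is a bounded linear map: each Fourier coefficient is controlled by the $H^s$ norm.

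\textbf{Step 3 (MLP approximation).} By the finite-dimensional universal approximation theorem for the chosen activation $\sigma$, there is an MLP $\varphi$ with $\sup_{z\in\Gamma}\|\varphi(z)-G(z)\|<\eta/2$. Combining this with Step 1 through the triangle inequality gives
\[
\sup_{u\in K}\|\varphi(\mathcal F_{k_E}(u))-g(u)\|<\eta .
\]

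I expect the main obstacle to be Step 1. The truncated inputs $P_{k_E}u$ need not lie in $K$, so $g$ itself cannot be evaluated there. This is exactly why the argument first extends $g$ to all of $H$ via Dugundji, and then uses uniform continuity relative to the compact set $K$ (Lemma~\ref{lem:ruc}) rather than uniform continuity on a neighborhood, which might not be compact. The remaining steps are routine.
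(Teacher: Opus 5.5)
Your proposal is correct and follows essentially the same route as the paper. The paper also extends $g$ to $\bar g$ via Dugundji, chooses $k_E$ using uniform convergence of $P_N$ on $K$ together with Lemma~\ref{lem:ruc}, approximates $\bar g\circ\mathcal S_{k_E}$ by an MLP on the compact set $\mathcal F_{k_E}(K)$, and concludes with the triangle inequality. As an aside, the set $K\cup\bigcup_N P_NK$ you set aside is in fact compact, but you rightly do not need it.
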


\begin{proof}
By Lemma~\ref{lem:extension}, extend $g$ to a continuous map
$$
\bar g:H\to\mathbb R^{d_h}.
$$

We first choose a Fourier truncation level. Since $P_k u\to u$ in $H$ for
each $u\in H$, and this convergence is uniform on compact subsets of $H$, we
have
$$
\sup_{u\in K}\norm{P_ku-u}_H\to0
\qquad \text{as } k\to\infty .
$$
Applying Lemma~\ref{lem:ruc} to the continuous map $\bar g$ and the compact
set $K$, we can choose $k_E$ sufficiently large so that
$$
\sup_{u\in K}
\norm{
\bar g(P_{k_E}u)-\bar g(u)
}
<\frac{\eta}{2}.
$$

It remains to approximate the finite-dimensional map induced by this
truncation. Define
$$
A:=\mathcal F_{k_E}(K)\subset\mathbb R^{D_{k_E}} .
$$
Since $K$ is compact and $\mathcal F_{k_E}$ is continuous, $A$ is compact.
Now define
$$
g_{k_E}:\mathbb R^{D_{k_E}}\to\mathbb R^{d_h},
\qquad
g_{k_E}(z):=\bar g(\mathcal S_{k_E}(z)).
$$
This map is continuous on the compact set $A$. By the finite-dimensional
universal approximation theorem for MLPs, there exists an MLP
$\varphi:\mathbb R^{D_{k_E}}\to\mathbb R^{d_h}$ such that
$$
\sup_{z\in A}
\norm{
\varphi(z)-g_{k_E}(z)
}
<\frac{\eta}{2}.
$$

For every $u\in K$, we have
$$
g_{k_E}(\mathcal F_{k_E}(u))
=
\bar g(\mathcal S_{k_E}(\mathcal F_{k_E}(u)))
=
\bar g(P_{k_E}u).
$$
Therefore,
$$
\begin{aligned}
\norm{
\varphi(\mathcal F_{k_E}(u))-g(u)
}
&\le
\norm{
\varphi(\mathcal F_{k_E}(u))
-
\bar g(P_{k_E}u)
}
+
\norm{
\bar g(P_{k_E}u)-\bar g(u)
} 
<
\eta .
\end{aligned}
$$
Taking the supremum over $u\in K$ proves the claim.
\end{proof}

The next lemma establishes a decoder approximation result. It shows that a continuous map from
the finite-dimensional latent space into the function space $H$ can be
approximated by first using an MLP to generate finitely many Fourier
coefficients and then applying band-limited Fourier synthesis.

\begin{lemmaA}[Decoder approximation]\label{lem:dec}
Let $\Lambda\subset\mathbb R^{d_h}$ be compact, and let
$D:\Lambda\to H$ be continuous. Then, for every $\eta>0$, there exist
$k_D\in\mathbb N$ and an MLP
$\psi:\mathbb R^{d_h}\to\mathbb R^{D_{k_D}}$ such that
$D_\theta=S_{k_D}\circ\psi$ satisfies
$$
\sup_{h\in\Lambda}
\|D_\theta(h)-D(h)\|_H<\eta.
$$
\end{lemmaA}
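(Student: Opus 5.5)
The plan is to mirror the proof of Lemma~\ref{lem:enc}: split the error into a Fourier truncation error on the \emph{output} side and a finite-dimensional MLP approximation error. Since $\Lambda$ is compact and $D$ is continuous, the image $D(\Lambda)\subset H$ is compact. Because $P_N u\to u$ uniformly on compact subsets of $H$ (shown in A.1), we can choose $k_D$ large enough that
$$
\sup_{h\in\Lambda}\|P_{k_D}D(h)-D(h)\|_H<\frac{\eta}{2}.
$$

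Next, we reduce to a finite-dimensional target. Define the continuous map $g:=\mathcal F_{k_D}\circ D:\Lambda\to\mathbb R^{D_{k_D}}$. Then $\mathcal S_{k_D}(g(h))=P_{k_D}D(h)$. By Lemma~\ref{lem:extension}, extend $g$ continuously to all of $\mathbb R^{d_h}$; alternatively, apply the finite-dimensional universal approximation theorem directly on the compact set $\Lambda$. This yields an MLP $\psi$ with $\sup_{h\in\Lambda}|\psi(h)-g(h)|<\delta$.

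It remains to convert coefficient error into $H$-norm error. The synthesis map $\mathcal S_{k_D}$ is linear on the finite-dimensional space $\mathbb R^{D_{k_D}}$, hence bounded with some operator norm $C_{k_D}$. For $H^s$, this constant is of order $(1+|k_D|^2)^{s/2}$ times a normalization constant. Choosing $\delta=\eta/(2C_{k_D})$ gives
$$
\|D_\theta(h)-P_{k_D}D(h)\|_H=\|\mathcal S_{k_D}(\psi(h)-g(h))\|_H<\frac{\eta}{2}.
$$
The triangle inequality then completes the proof.

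The only delicate point is the order of choices: $k_D$ must be fixed first, and only then $\delta$, because the bound $C_{k_D}$ grows with $k_D$. With that order respected, no real obstacle remains.
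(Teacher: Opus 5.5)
Your proposal is correct and follows the paper's proof essentially step for step: use compactness of $D(\Lambda)$ to fix $k_D$, approximate the coefficient map $\mathcal F_{k_D}\circ D$ by an MLP to accuracy $\eta/(2C_{k_D})$, then conclude with boundedness of the synthesis map and the triangle inequality. As you note, the extension via Lemma~\ref{lem:extension} is unnecessary, since MLP universality applies directly on the compact set $\Lambda$.
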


\begin{proof}
Since $D$ is continuous and $\Lambda$ is compact, the image
$D(\Lambda)$ is compact in $H$. By the uniform convergence of Fourier
truncations on compact subsets of $H$, we have
$$
\sup_{h\in\Lambda}
\norm{P_N D(h)-D(h)}_H
=
\sup_{h\in\Lambda}
\norm{Q_ND(h)}_H
\to 0
\qquad \text{as } N\to\infty .
$$
Choose $k_D$ sufficiently large so that
$$
\sup_{h\in\Lambda}
\norm{Q_{k_D} D(h)}_H
<\frac{\eta}{2}.
$$

We now approximate the retained Fourier coefficients. Define the coefficient
map
$$
a_{k_D}:\Lambda\to\mathbb R^{D_{k_D}},
\qquad
a_{k_D}(h):=\mathcal F_{k_D}(D(h)).
$$
This map is continuous because $D$ is continuous and
$\mathcal F_{k_D}$ is a continuous finite-dimensional Fourier feature map.
Since $\Lambda$ is compact, the finite-dimensional universal approximation
theorem for MLPs gives an MLP
$\psi:\mathbb R^{d_h}\to\mathbb R^{D_{k_D}}$ such that
$$
\sup_{h\in\Lambda}
\norm{\psi(h)-a_{k_D}(h)}
<
\frac{\eta}{2C_{k_D}},
$$
where
$$
C_{k_D}:=\norm{\mathcal S_{k_D}}_{\mathcal L(\mathbb R^{D_{k_D}},H)}<\infty .
$$
This norm is finite because $\mathcal S_{k_D}$ is a linear map defined on the
finite-dimensional normed space $\mathbb R^{D_{k_D}}$; every such linear map
into a normed space is bounded.

For any $h\in\Lambda$, using
$\mathcal S_{k_D}(\mathcal F_{k_D}(D(h)))=P_{k_D} D(h)$, we obtain
$$
\begin{aligned}
\norm{D_\theta(h)- D(h)}_H
&=
\norm{\mathcal S_{k_D}(\psi(h))- D(h)}_H  \\
&\le
\norm{\mathcal S_{k_D}(\psi(h)-a_{k_D}(h))}_H
+
\norm{P_{k_D} D(h)- D(h)}_H  \\
&\le
C_{k_D}\norm{\psi(h)-a_{k_D}(h)}
+
\norm{Q_{k_D} D(h)}_H  \\
&<
\eta .
\end{aligned}
$$
Taking the supremum over $h\in\Lambda$ proves the claim.
\end{proof}

The next lemma establishes the universality property of the conditioned residual operator.

\begin{lemmaA}[Residual operator universality]\label{lem:cond_fno}
Let $S\subset H\times \mathbb R^{d_h}$ be compact, and let
$\mathcal{R}:H\times \mathbb R^{d_h}\to H$ be continuous. Then, for every $\eta>0$, there
exists a conditioned residual operator $\mathcal{R}_\theta$ such that
\[
    \sup_{(v,h)\in S}
    \|\mathcal{R}_\theta(v,h)-\mathcal{R}(v,h)\|_H < \eta .
\]
\end{lemmaA}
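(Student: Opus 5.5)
The plan is to reduce the conditioned residual operator to a standard FNO acting on an augmented input, and then apply Lemma~\ref{lem} (universal approximation by FNOs). The latent coordinate $h$ will be encoded as a spatially constant field. Concretely, set $\iota:\mathbb R^{d_h}\to H^s(\mathbb T^d;\mathbb R^{d_h})$, $\iota(h)(x)\equiv h$, which is a linear isometry up to a constant factor. Define the augmented-input operator
\[
\widetilde{\mathcal R}:H^s(\mathbb T^d;\mathbb R^{c+d_h})\to H,\qquad \widetilde{\mathcal R}(v,w):=\mathcal R\bigl(v,\overline{w}\bigr),
\]
where $\overline{w}=|\mathbb T^d|^{-1}\int w\,dx\in\mathbb R^{d_h}$ is the spatial mean. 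The map $w\mapsto\overline w$ is continuous on $H^s$ for $s\ge 0$, so $\widetilde{\mathcal R}$ is continuous. The set $\widetilde S:=\{(v,\iota(h)):(v,h)\in S\}$ is compact, being the image of $S$ under a continuous map. Lemma~\ref{lem} then yields an FNO $\mathcal N$ with $\sup_{\widetilde S}\|\mathcal N-\widetilde{\mathcal R}\|_H<\eta/2$. Since $\widetilde{\mathcal R}(v,\iota(h))=\mathcal R(v,h)$, it remains to realize $(v,h)\mapsto\mathcal N(v,\iota(h))$, or an $\eta/2$-approximation of it on $S$, inside the conditioned architecture.

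The key step is to show that the constant channels can be injected through the biases $\beta_\ell(h)$ instead of through the lifting layer $\mathcal P$. I would widen every hidden layer by $d_h$ extra ``carrier'' channels, which therefore have width $d_c+d_h$.

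\emph{First hidden layer.} Write $\mathcal N=\mathcal Q_N\circ\mathcal L_L\circ\cdots\circ\mathcal L_1\circ\mathcal P_N$, where $\mathcal P_N(v,w)(x)=P_N(v(x),w(x),x)$. Take $\beta_1$ to place $h$ in the carrier channels, and take the remaining weights so that the carrier channels enter $\sigma$ in a regime where $\sigma$ is approximately the identity on the compact range of $h$. One can use $\sigma(a+t)$ with its local linear part, or, for ReLU, the exact identity $t=\sigma(t+M)-M$ with a large shift, the $-M$ being absorbed downstream.

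\emph{Lifting.} In the original FNO the lifting $P_N$ is pointwise nonlinear in $(v,w,x)$. To handle this, I would first approximate $P_N$ by an MLP and then emulate its layers with additional hidden FNO layers in which $\mathcal K_\ell=0$. In these layers the $h$-dependence enters affinely through $\beta_\ell(h)$, and $h$ is propagated in the carrier channels, so that the next layer's $W_\ell$ can mix it with the $v$-channels. An affine map of the pair (carrier, $v$-channels) is exactly what a pointwise $W_\ell$ plus a bias provides.

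Propagating the carrier channels through all subsequent layers $\mathcal L_\ell$ is done by the same identity trick, while the original channels see the original weights, kernels and biases. The output $\mathcal Q$ ignores the carriers.

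The main obstacle is error control. The emulated layers reproduce $\mathcal N(v,\iota(h))$ only approximately, because the identity on the carriers and the MLP emulation of $P_N$ are approximate. I would handle this by backward induction over the layers, using the continuity of each FNO layer on $H^s$ together with Lemma~\ref{lem:ruc}, applied to the compact intermediate images of $\widetilde S$. At each stage this gives a tolerance $\delta_\ell$ such that perturbations of size $\delta_\ell$ in the input to layer $\ell$ change the final output by less than $\eta/(2(L+2))$. For ReLU the carrier propagation is exact, which leaves only the lifting emulation; there, uniform MLP approximation on the compact range of $(v(x),h,x)$ controls the $H^s$ error for $s=0$. For $s>0$ this step is delicate because of derivatives. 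A cleaner route would be to prepend a first layer whose spectral part projects onto finitely many modes, so that the argument runs in a finite-dimensional, smooth setting, as in the proof of Lemma~\ref{lem:dec}. Summing the stage errors with the $\eta/2$ from Lemma~\ref{lem} then gives the claimed bound.
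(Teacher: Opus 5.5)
Your Step~1 is sound. Defining $\widetilde{\mathcal R}(v,w)=\mathcal R(v,\overline w)$ is actually cleaner than the paper's choice: the paper defines $\widetilde{\mathcal R}$ only on $\widetilde S$ (via the homeomorphism $\mathcal I|_S$) and then extends it with Dugundji's theorem, whereas your formula is a global continuous extension from the start.

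The gap is in the realization step. You reproduce $(v,h)\mapsto\mathcal N(v,\iota(h))$ only approximately, using approximate identities on carrier channels and an MLP surrogate for the lifting, and the error control this forces on you is not established:
\begin{itemize}
\item For $s=0$ there is no ``compact range of $(v(x),h,x)$''. Elements of a compact subset of $L^2$ need not be pointwise bounded. So uniform approximation of $P_N$ on a compact subset of $\mathbb R^{c+d_h+d}$ does not bound the $L^2$ error of the lifting unless you control the surrogate's growth, and the finite-dimensional universal approximation theorem gives no such control.
\item For $s>0$ you leave the step open.
\item Your backward induction also needs every intermediate layer to be continuous on $H^s$, which Lemma~\ref{lem} does not supply.
\end{itemize}

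The missing observation is that this step needs no approximation: the conditioned class reproduces $\mathcal N$ exactly, for any activation. In $\mathcal N$ the constant channel $h$ enters only through the first affine map of the lifting MLP,
\[
A_1y_1+c_1=A_{1,\mathrm{loc}}(v(x),x)+A_{1,h}h+c_1 .
\]
The term $A_{1,h}h+c_1$ is an affine function of $h$, which is exactly what a bias $\beta(h)$ can supply. The paper's construction is:
\begin{itemize}
\item Take $\mathcal P(v)(x)=(v(x),x,0)$.
\item Use $J-1$ conditioned layers with $\mathcal K=0$ to reproduce the hidden activations $y_2,\dots,y_J$ of the lifting MLP, with $\beta_{0,1}(h)=A_{1,h}h+c_1$ in the first of them and constant biases $c_j$ afterwards.
\item Absorb the last affine map $A_Jy_J+c_J$ into the first Fourier layer, using weights $W_1^{\mathcal N}A_J$ and $R_1^{\mathcal N}(k)A_J$ and the constant bias $W_1^{\mathcal N}c_J+\mathcal K_1^{\mathcal N}c_J+b_1^{\mathcal N}$; note that $\mathcal K_1^{\mathcal N}c_J$ is constant in $x$.
\item Copy the remaining layers of $\mathcal N$.
\end{itemize}
This gives $\mathcal R_\theta(v,h)=\mathcal N(v,\mathbf 1\otimes h)$ identically, so the only error is the one from Lemma~\ref{lem}, and no layerwise continuity or error propagation is needed.

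Your carriers are superfluous: $\beta_\ell(h)$ is available afresh at every layer, and once the first lifting layer has been emulated, $h$ is never used again. The MLP surrogate is also unnecessary, because in this framework the lifting of an FNO is already an MLP. Your own remark that a pointwise $W_\ell$ plus a bias gives an arbitrary affine map of the channels is the whole idea; you just did not push it to exactness.
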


\begin{proof}
We split the proof into two steps. First, we encode the conditioning variable $h$ as a constant function and apply the standard FNO universal approximation theorem to obtain an FNO that approximates $\mathcal{R}$ on a compact set. Second, we show that one can directly construct a residual operator $\mathcal{R}_\theta$ that exactly reproduces  this FNO on $S$.

\medskip
\noindent
\textbf{Step 1: Approximating $\mathcal{R}$ by a standard FNO on augmented inputs.}

For each $h\in \mathbb R^{d_h}$, let $\mathbf 1\otimes h$ denote the
constant $\mathbb R^{d_h}$-valued function on $\mathbb T^d$ with value
$h$. Define the augmentation map
\[
    \mathcal I:H\times \mathbb R^{d_h}
    \longrightarrow
    H^s(\mathbb T^d;\mathbb R^{c+d_h}),
    \qquad
    \mathcal I(v,h):=(v,\mathbf 1\otimes h).
\]
Since $\mathcal I$ is continuous, the set
\[
    \widetilde S:=\mathcal I(S)
    =
    \{(v,\mathbf 1\otimes h):(v,h)\in S\}
\]
is compact. Moreover, $\mathcal I$ is injective on $S$. Since $S$ is compact
and $H^s(\mathbb T^d;\mathbb R^{c+d_h})$ is Hausdorff,
$\mathcal I$ is a homeomorphism from $S$ onto $\widetilde S$.

Define
\[
    \widetilde{\mathcal{R}}:\widetilde S\to H,
    \qquad
    \widetilde{\mathcal{R}}(v,\mathbf 1\otimes h):=\mathcal{R}(v,h).
\]
This map is well-defined and continuous because $\mathcal I^{-1}$ is
continuous on $\widetilde S$ and $\mathcal{R}$ is continuous on
$H\times\mathbb R^{d_h}$.

By Lemma~\ref{lem:extension}, $\widetilde{\mathcal{R}}$ admits a continuous
extension, denoted by $\bar{\mathcal{R}}$, from
$H^s(\mathbb T^d;\mathbb R^{c+d_h})$ to $H$. Applying the standard FNO
universal approximation theorem to this extended map on the compact set
$\widetilde S$, we obtain a standard FNO $\mathcal N$ such that
\[
    \sup_{(v,\mathbf 1\otimes h)\in \widetilde S}
    \bigl\|
        \mathcal N(v,\mathbf 1\otimes h)-\bar{\mathcal{R}}(v,\mathbf 1\otimes h)
    \bigr\|_H = \sup_{(v,h)\in S}
    \bigl\|
        \mathcal N(v,\mathbf 1\otimes h)-{\mathcal{R}}(v,h)
    \bigr\|_H
    <
    \eta.
\]
It remains to show that the conditioned residual-operator class can
reproduce the map
\[
    (v,h)\mapsto \mathcal N(v,\mathbf 1\otimes h)
\]
on $S$.

\medskip
\noindent
\textbf{Step 2: Reproducing the augmented FNO with a residual operator.}

In Step 1, we constructed a standard FNO $\mathcal N$ on the augmented
input $(v,\mathbf 1\otimes h)$ such that
$\mathcal N(v,\mathbf 1\otimes h)$
approximates $\mathcal{R} (v,h)$ uniformly on $S$. We now show that, with a suitable choice of parameters, the residual operator $\mathcal{R}_\theta$ can exactly reproduce this augmented-input FNO on $S$.
The only difference is how the latent variable $h$ enters the model. In
$\mathcal N$, $h$ is provided as a spatially constant input channel. In
$\mathcal{R}_\theta$, $h$ enters through the additive conditioning terms
$\beta_\ell(h)$. 

To construct such a residual operator, we first write the standard FNO
$\mathcal N$ explicitly.  Since $\mathcal N$
acts on the augmented input
\[
    w(x)=(v(x),h),
    \qquad
    w=(v,\mathbf 1\otimes h),
\]
we may write
\[
    \mathcal N
    =
    Q^{\mathcal N}
    \circ
    \mathcal L^{\mathcal N}_L
    \circ
    \cdots
    \circ
    \mathcal L^{\mathcal N}_1
    \circ
    \mathcal P^{\mathcal N}.
\]
Here $\mathcal P^{\mathcal N}$ is a pointwise lifting layer implemented by an MLP,
\[
    \mathcal P^{\mathcal N}(w)(x)
    =
    P^{\mathcal N}(v(x),h,x)
    \in \mathbb R^{d_{\mathcal N}},
\]
where $d_{\mathcal N}$ is the hidden channel width of $\mathcal N$. 

Each hidden Fourier layer $\mathcal L^{\mathcal N}_l$ has the form
\[
    \mathcal L^{\mathcal N}_\ell(z)(x)
    =
    \sigma\bigl(
        W^{\mathcal N}_\ell z(x)
        +
        b^{\mathcal N}_\ell
        +
        (K^{\mathcal N}_\ell z)(x)
    \bigr),
    \qquad z(x)\in \mathbb R^{d_{\mathcal N}},
\]
and $Q^{\mathcal N}$ is the final pointwise projection layer.

We construct the conditioned residual operator $\mathcal{R}_\theta$ in the form
\[
    \mathcal{R}_\theta
    =
   \mathcal Q
    \circ
   \mathcal L_L^{(h)}
    \circ
    \cdots
    \circ
   \mathcal L_1^{(h)}
    \circ
   \mathcal L_0^{(h)}
    \circ
   \mathcal P.
\]

Here, $\mathcal P$ is the lifting layer, and
$\mathcal L_0^{(h)}$ is a preparation block consisting of one or more
conditioned residual layers. Its role is to transform the lifted input
into the hidden representation required by the target FNO
$\mathcal N$. For each $\ell=1,\ldots,L$,
$\mathcal L_\ell^{(h)}$ is a single conditioned Fourier layer chosen to
reproduce the $\ell$th Fourier layer of $\mathcal N$. Finally,
$\mathcal Q$ reproduces the projection layer of $\mathcal N$.

% Choose the hidden width $d_R$ of $\mathcal{R}_\theta$ appropriately so that we can
% decompose the channel space as
% \[
%     \mathbb R^{d_R}
%     =
%     \mathcal A
%     \oplus
%     \mathcal B .
% \]
% The subspace $\mathcal A$ stores the initial input features,
% namely $v(x)$ together with the spatial coordinate features $x$. These channels
% provide the data needed by the preparation block. The subspace
% $\mathcal B\simeq \mathbb R^{d_{\mathcal N}}$ is initialized to zero and reserved for carrying the hidden feature representation of the augmented FNO $\mathcal N$.
% After the preparation block, $\mathcal B$ will contain the hidden feature
% $y_J(x,h)$ of the lifting MLP, and the subsequent blocks will update this feature
% to reproduce the computation of the Fourier blocks of $\mathcal N$.

Choose the hidden width $d_R$ of $\mathcal R_\theta$ so that its
channel space admits the coordinate decomposition
$$
\mathbb R^{d_R}
=
\mathcal A\oplus\mathcal B,
\qquad
\mathcal A\simeq\mathbb R^{c+d},
\qquad
\mathcal B\simeq\mathbb R^{d_{\mathcal N}}.
$$
Let $\pi_{\mathcal A}$ and $\pi_{\mathcal B}$ denote the corresponding
coordinate projections. The auxiliary channels $\mathcal A$ store the
local input features $(v(x),x)$, while the target channels $\mathcal B$
are used to reproduce the hidden features of the augmented FNO
$\mathcal N$.

Choose the lifting layer $\mathcal P$ so that
$$
\mathcal P(v)(x)
=
\bigl(v(x),x,0_{d_{\mathcal N}}\bigr)
\in
\mathcal A\oplus\mathcal B.
$$
Thus,
$$
\pi_{\mathcal A}\mathcal P(v)(x)
=
\bigl(v(x),x\bigr),
\qquad
\pi_{\mathcal B}\mathcal P(v)(x)
=
0.
$$

Write the lifting MLP of $\mathcal N$ in the standard form
\[
    y_1(x,h)=\bigl(v(x),x,h\bigr),
\]
\[
    y_{j+1}(x,h)
    =
    \sigma\bigl(
        A_j y_{j}(x,h)+c_j
    \bigr),
    \qquad j=1,\ldots J-1,
\]
and
\[
    P^{\mathcal N}(v(x),h,x)
    =
    A_{J}y_J(x,h)+c_{J}.
\]
By increasing the width and padding unused channels with zeros if necessary, we may assume that
$$
y_2,\ldots,y_J
\in
\mathbb R^{d_{\mathcal N}}
\simeq
\mathcal B.
$$

We now construct the preparation block $\mathcal L_0 ^{(h)}$. It consists of $J-1$ conditioned layers with $K_\ell=0$. Writing
$$
z_1(x)
=
\mathcal P(v)(x),
$$ 
the layers inside $\mathcal L_0^{(h)}$ take the form
$$
z_{j+1}(x)
=
\sigma\!\left(
W_{0,j}z_j(x)+\beta_{0,j}(h)
\right),
\qquad
j=1,\ldots,J-1.
$$
We choose these layers so that
$$
\pi_{\mathcal B}z_j(x)
=
y_j(x,h),
\qquad
j=2,\ldots,J.
$$
% The role of $\mathcal L_0$ is to reproduce the hidden part of the lifting
% MLP of the augmented FNO. We show that for the same input $(v(x),x,h)$ used by the lifting MLP of $\mathcal N$, the preparation block $\mathcal L_0$ can be chosen so that its output on the subspace $\mathcal B$ agrees exactly with $y_J(x,h)$.

To construct the first layer, decompose the first affine map of the
lifting MLP according to its local and latent inputs:
$$
A_1y_1(x,h)+c_1
=
A_{1,\mathrm{loc}}
\bigl(v(x),x\bigr)
+
A_{1,h}h+c_1.
$$

Choose the $\mathcal A\to\mathcal B$ and
$\mathcal B\to\mathcal B$ blocks of $W_{0,1}$ so that
$$
\pi_{\mathcal B}W_{0,1}|_{\mathcal A}
=
A_{1,\mathrm{loc}},
\qquad
\pi_{\mathcal B}W_{0,1}|_{\mathcal B}
=
0,
$$
and choose the conditioning term so that
$$
\pi_{\mathcal B}\beta_{0,1}(h)
=
A_{1,h}h+c_1.
$$
It follows that
$$
\begin{aligned}
\pi_{\mathcal B}z_2(x)
&=
\sigma\!\left(
A_{1,\mathrm{loc}}\bigl(v(x),x\bigr)
+
A_{1,h}h+c_1
\right)=
y_2(x,h).
\end{aligned}
$$

For $j=2,\ldots,J-1$, suppose inductively that
$$
\pi_{\mathcal B}z_j(x)
=
y_j(x,h).
$$
Choose the $j$th conditioned layer so that
$$
\pi_{\mathcal B}W_{0,j}|_{\mathcal B}
=
A_j,
\qquad
\pi_{\mathcal B}W_{0,j}|_{\mathcal A}
=
0,
$$
and
$$
\pi_{\mathcal B}\beta_{0,j}(h)
=
c_j.
$$
Then
$$
\begin{aligned}
\pi_{\mathcal B}z_{j+1}(x)
&=
\sigma\!\left(
A_j\pi_{\mathcal B}z_j(x)+c_j
\right)
\\
&=
\sigma\!\left(
A_jy_j(x,h)+c_j
\right)
\\
&=
y_{j+1}(x,h).
\end{aligned}
$$
Therefore, by induction,
$$
\pi_{\mathcal B}z_J(x)
=
y_J(x,h).
$$

After the preparation block, all subsequent couplings from
$\mathcal A$ into $\mathcal B$ are set to zero. Hence the remaining
computation in the $\mathcal B$-channels depends only on the feature
$y_J(x,h)$.

The remaining affine output layer of the lifting MLP is
$$
P^{\mathcal N}(x)=P^{\mathcal N}\bigl(v(x),h,x\bigr)
=
A_Jy_J(x,h)+c_J.
$$
Rather than implementing this map as an additional preparation layer,
we absorb it into the first conditioned Fourier layer $\mathcal L_1^{(h)}$.

The pre-activation of the first Fourier layer of $\mathcal N$ is
$$
W_1^{\mathcal N}P^{\mathcal N}(x)
+
\bigl(\mathcal K_1^{\mathcal N}P^{\mathcal N}\bigr)(x)
+
b_1^{\mathcal N}.
$$
Substituting
$P^{\mathcal N}(x)=A_Jy_J(x,h)+c_J$ gives
$$
\begin{aligned}
&
W_1^{\mathcal N}A_Jy_J(x,h)
+
\bigl(\mathcal K_1^{\mathcal N}(A_Jy_J)\bigr)(x)
+
W_1^{\mathcal N}c_J
+
\bigl(\mathcal K_1^{\mathcal N}c_J\bigr)(x)
+
b_1^{\mathcal N}.
\end{aligned}
$$

On the other hand, the $\mathcal B$-component of the pre-activation
of the first conditioned Fourier layer of $\mathcal R_\theta$ is
$$
\pi_{\mathcal B}
\left[
W_1z_J(x)
+
(\mathcal K_1z_J)(x)
+
\beta_1(h)
\right].
$$
Choose the relevant blocks of the pointwise and Fourier operators so
that
$$
\pi_{\mathcal B}W_1|_{\mathcal B}
=
W_1^{\mathcal N}A_J,
\qquad
\pi_{\mathcal B}W_1|_{\mathcal A}
=
0,
$$
and, for every retained Fourier mode $k$,
$$
\pi_{\mathcal B}R_1(k)|_{\mathcal B}
=
R_1^{\mathcal N}(k)A_J,
\qquad
\pi_{\mathcal B}R_1(k)|_{\mathcal A}
=
0.
$$
Finally, choose
$$
\pi_{\mathcal B}\beta_1(h)
=
W_1^{\mathcal N}c_J
+
\mathcal K_1^{\mathcal N}c_J
+
b_1^{\mathcal N}.
$$
With these choices, the $\mathcal B$-component of the first
conditioned Fourier layer of $\mathcal R_\theta$ exactly reproduces
the first Fourier layer of $\mathcal N$ applied to $P^{\mathcal N}(x)$.

We next reproduce the remaining Fourier layers of $\mathcal N$. For
each $\ell=2,\ldots,L$, choose the corresponding conditioned layer of
$\mathcal R_\theta$ so that
$$
\pi_{\mathcal B}W_\ell|_{\mathcal B}
=
W_\ell^{\mathcal N},
\qquad
\pi_{\mathcal B}W_\ell|_{\mathcal A}
=
0,
$$
$$
\pi_{\mathcal B}\mathcal K_\ell|_{\mathcal B}
=
\mathcal K_\ell^{\mathcal N},
\qquad
\pi_{\mathcal B}\mathcal K_\ell|_{\mathcal A}
=
0,
$$
and
$$
\pi_{\mathcal B}\beta_\ell(h)
=
b_\ell^{\mathcal N}.
$$
Since the first conditioned Fourier layer reproduces the output of
$\mathcal L_1^{\mathcal N}$ in the $\mathcal B$-channels, and each
subsequent conditioned layer reproduces the corresponding layer
$\mathcal L_\ell^{\mathcal N}$, induction over $\ell$ shows that the
hidden Fourier computation of $\mathcal R_\theta$ agrees exactly with
that of $\mathcal N$ in the $\mathcal B$-channels.

It remains to match the output projection. Choose $\mathcal Q$ to
depend only on the $\mathcal B$-component and to apply the same
pointwise projection as $\mathcal Q^{\mathcal N}$. Then, for every
$(v,h)\in S$,
$$
\mathcal R_\theta(v,h)
=
\mathcal N(v,\mathbf 1\otimes h).
$$
Combining this identity with the approximation estimate from Step 1
gives
$$
\sup_{(v,h)\in S}
\left\|
\mathcal R_\theta(v,h)
-
\mathcal R(v,h)
\right\|_H
<
\eta.
$$
This proves the lemma.
\end{proof}

\subsubsection*{A.3 Main Proof}

\begin{proof}[Proof of Theorem 1]
We now prove Theorem~\ref{thm:imno-uat} by combining the approximation results above.

\paragraph{Step 0: Target maps and extensions.}
We first recall the notations introduced in Section A.1. For $u\in K$, define
\[
    h(u)=E(\Pi(u)),
    \qquad
    r(u)=u-\Pi(u),
\]
and
\[
    \Lambda_0=h(K),
    \qquad
    F(h)
    =
    E\bigl(\mathcal{G}(D(h))\bigr).
\]
We also write
\[
    h_1(u):=F(h(u)),
    \qquad
    \Lambda_1:=F(\Lambda_0).
\]
By the discussion in Section A.1, $F:\Lambda_0\to\Lambda_1$ is a
homeomorphism. Moreover,
\[
    D(h_1(u))
    =
    \Pi(\mathcal{G}(u)).
\]

Because the residual operator in our architecture is conditioned
on the next latent coordinate, we first introduce its target domain
\[
    C_K'
    :=
    \{(r(u),h_1(u)):u\in K\}
    \subset H\times\mathbb R^{d_h}.
\]
We then define the exact residual target $\mathcal R:C'_K\to H$ by
\[
    \mathcal R(v,h_1)
    :=
    \mathcal R_0\bigl(v,(F)^{-1}(h_1)\bigr).
\]
Then $\mathcal R$ is continuous on $C_K'$, and for every $u\in K$,
\[
    \mathcal R(r(u),h_1(u))
    =
    \mathcal{G}(u)-\Pi(\mathcal{G}(u)).
\]
Thus $\mathcal R$ gives the exact next-step residual when evaluated at the
exact current residual and exact next-step latent coordinate.

Finally, by lemma~\ref{lem:extension}, choose continuous extensions
\[
    \bar D:\mathbb R^{d_h}\to H,
    \qquad
    \bar {\mathcal R}:H\times\mathbb R^{d_h}\to H,
\]
of $D$ and $\mathcal R$, respectively. Also extend
\[
    f(h)
    :=
    \frac{F(h)-h}{\Delta t},
    \qquad h\in\Lambda_0,
\]
to a continuous map
\[
    \bar f:\mathbb R^{d_h}\to\mathbb R^{d_h}.
\]
Let
\[
    \Lambda_* :=
    \{h\in\mathbb R^{d_h}:
    \operatorname{dist}(h,\Lambda_0\cup\Lambda_1)\le 1\}.
\]
This set is compact. Let $\omega_D$ and $\omega_f$ be moduli of uniform
continuity of $\bar D$ and $\bar f$ on $\Lambda_*$.

\paragraph{Step 1: Choosing the error budgets.}
We now choose the small constants used in the approximation of the encoder,
latent dynamics, decoder, and residual operator.

Apply lemma~\ref{lem:ruc} to $\bar{\mathcal R}$ on the compact set
$C_K'$. With tolerance $\varepsilon/4$, there exists $\delta_T\in(0,1]$
such that if $(v_0,h_0)\in C_K'$ and
\[
    \|v-v_0\|_H+\norm{h-h_0}<\delta_T,
\]
then
\[
    \|\bar{\mathcal R}(v,h)-\bar{\mathcal R}(v_0,h_0)\|_H<\frac{\varepsilon}{4}.
\]

Choose $\tau\in(0,\frac{\delta_T}{2}]$ such that
\[
    \omega_D(\tau)
    \le
    \min\left\{\frac{\varepsilon}{4},\frac{\delta_T}{4}\right\}.
\]
Then choose $\delta_1\in(0,\tau]$ such that
\[
    \delta_1+\Delta t\,\omega_f(\delta_1)\le \frac{\tau}{2}.
\]
Finally choose $\delta_2>0$ such that
\[
    \Delta t\,\delta_2\le \frac{\tau}{2},
\]
and set
\[
    \delta_4
    :=
    \min\left\{\frac{\varepsilon}{4},\frac{\delta_T}{4}\right\}.
\]

These choices ensure that small errors in the latent coordinate lead to
small errors after decoding, and that the approximate residual input
remains within the $\delta_T$-neighborhood where $\bar{\mathcal R}$ is controlled.

\paragraph{Step 2: Encoder and latent dynamics.}
We first approximate the exact latent coordinate map
$h:K\to\mathbb R^{d_h}$. By lemma~\ref{lem:enc},
there exist a truncation level $k_E$ and an encoder MLP such that
\[
    \hat h_0(u):=E_\theta(u)
\]
satisfies
\[
    \sup_{u\in K}
        \norm{\hat h_0(u)-h(u)}
    <
    \delta_1.
\]
Since $h(u)\in\Lambda_0$ and $\delta_1\le 1$, we have
$\hat h_0(u)\in\Lambda_*$ for every $u\in K$.

Next, approximate the continuous latent vector field $\bar f$ on
$\Lambda_*$. By MLP universality, choose $f_\theta$ so
that
\[
    \sup_{h\in\Lambda_*}
    \norm{f_\theta(h)-\bar f(h)}    <
    \delta_2.
\]
The learned one-step latent update is
\[
    \hat h_1(u)
    :=
    \hat h_0(u)+\Delta t\,f_\theta(\hat h_0(u)).
\]
We compare it with the exact next latent coordinate
\[
    h_1(u)=F(h(u)).
\]
Using the definition of $\bar f$ on $\Lambda_0$,
\[
    h_1(u)
    =
    h(u)+\Delta t\,\bar f(h(u)).
\]
Therefore,
\[
\begin{aligned}
    \norm{\hat h_1(u)-h_1(u)}
    &\le
    \norm{\hat h_0(u)-h(u)} \\
    &\quad
    +\Delta t\,\norm{f_\theta(\hat h_0(u))-\bar f(\hat h_0(u))} \\
    &\quad
    +\Delta t\,\norm{\bar f(\hat h_0(u))-\bar f(h(u))} \\
    &\le
    \delta_1+\Delta t\,\delta_2+\Delta t\,\omega_f(\delta_1).
\end{aligned}
\]
Define
\[
    \delta_3
    :=
    \delta_1+\Delta t\,\delta_2+\Delta t\,\omega_f(\delta_1).
\]
By the choices in Step 1, $\delta_3\le \tau$. Hence
\[
    \norm{\hat h_1(u)-h_1(u)}\le \delta_3\le \tau.
\]
Since $h_1(u)\in\Lambda_1$ and $\tau\le 1$, we also have
$\hat h_1(u)\in\Lambda_*$.

\paragraph{Step 3: Decoder and manifold prediction.}
We now apply Lemma~\ref{lem:dec} to the continuous map
$$
\bar D|_{\Lambda_*}:\Lambda_*\to H.
$$
Thus, we can choose a decoder $D_\theta$ such that
\[
    \sup_{h\in\Lambda_*}
    \|D_\theta(h)-\bar D(h)\|_H
    <
    \delta_4.
\]

The learned and reference next-step manifold components are given by
\[
    u^M_{\mathrm{next}}(u)
    :=
    D_\theta(\hat h_1(u)),
    \qquad
    u^{M,\mathrm{ref}}_{\mathrm{next}}(u)
    :=
    \Pi(\mathcal{G}(u))
    =
    \bar D(h_1(u)).
\]
Therefore,
\[
\begin{aligned}
    \|u^M_{\mathrm{next}}(u)
    -
    u^{M,\mathrm{ref}}_{\mathrm{next}}(u)\|_H
    &=
    \|D_\theta(\hat h_1(u))-\bar D(h_1(u))\|_H \\
    &\le
    \|D_\theta(\hat h_1(u))-\bar D(\hat h_1(u))\|_H \\
    &\quad
    +
    \|\bar D(\hat h_1(u))-\bar D(h_1(u))\|_H \\
    &<
    \delta_4+\omega_D(\delta_3) \\
    &\le
    \frac{\varepsilon}{4}+\frac{\varepsilon}{4}
    =
    \frac{\varepsilon}{2}.
\end{aligned}
\]
Thus
\[
    \sup_{u\in K}
    \|u^M_{\mathrm{next}}(u)
    -
    u^{M,\mathrm{ref}}_{\mathrm{next}}(u)\|_H
    <
    \frac{\varepsilon}{2}.
\]

Using the decoder estimate above, we can also control the error in the
residual input passed to the learned residual operator. The learned and reference initial residuals are given by
\[
    \hat u^R(u):=u-D_\theta(\hat h_0(u)),
    \qquad
    r(u)=u-\Pi(u)
    =
    u-D(h(u))
    =
    u-\bar D(h(u)).
\]
Therefore,
\[
\begin{aligned}
    \|\hat u^R(u)-r(u)\|_H
    &=
    \|D_\theta(\hat h_0(u))-\bar D(h(u))\|_H \\
    &\le
    \|D_\theta(\hat h_0(u))-\bar D(\hat h_0(u))\|_H \\
    &\quad
    +
    \|\bar D(\hat h_0(u))-\bar D(h(u))\|_H \\
    &<
    \delta_4+\omega_D(\delta_1) \\
    &\le
    \frac{\delta_T}{4}+\frac{\delta_T}{4}
    =
    \frac{\delta_T}{2}.
\end{aligned}
\]

\paragraph{Step 4: Residual prediction.}
The encoder, latent dynamics, and decoder have now been fixed. Hence the
map
\[
    u\mapsto (\hat u^R(u),\hat h_1(u))
\]
is continuous on $K$. Since $K$ is compact, the set
\[
    S_\theta
    :=
    \{(\hat u^R(u),\hat h_1(u)):u\in K\}
    \subset H\times\mathbb R^{d_h}
\]
is compact.

By lemma~\ref{lem:cond_fno}, there exists a conditioned residual operator
$\mathcal R_\theta$ such that
\[
\sup_{(v,h)\in S_\theta}
\|\mathcal R_\theta(v,h)-\bar{\mathcal R}(v,h)\|_H
<
\frac{\varepsilon}{4}.
\]

Moreover, by the estimates established in Steps 2 and 3,
\[
\begin{aligned}
    \|\hat u^R(u)-r(u)\|_H
    +
    \norm{\hat h_1(u)-h_1(u)}
    &<
    \frac{\delta_T}{2}+\delta_3 \\
    &\le
    \frac{\delta_T}{2}+\frac{\delta_T}{2}
    =
    \delta_T.
\end{aligned}
\]
% Here we used $\delta_3\le \tau \le \frac{\delta_T}{2}$. 
Hence, for every $u\in K$, the pair
$(\hat u^R(u),\hat h_1(u))$ lies within the $\delta_T$-neighborhood of
$(r(u),h_1(u))\in C_K'$.
Therefore, by the choice of $\delta_T$,
\[
    \|\bar{\mathcal R}(\hat u^R(u),\hat h_1(u))
    -
    \bar{\mathcal R}(r(u),h_1(u))\|_H
    <
    \frac{\varepsilon}{4}.
\]

The learned and reference next-step residuals are given by
\[
u^R_{\mathrm{next}}(u)
:=
\mathcal R_\theta\bigl(\hat u^R(u),\hat h_1(u)\bigr),
\qquad
u^{R,\mathrm{ref}}_{\mathrm{next}}(u)
:=
\bar{\mathcal R}\bigl(r(u),h_1(u)\bigr).
\]
Using the estimate above, we obtain
\[
\begin{aligned}
    \|u^R_{\mathrm{next}}(u)
    -
    u^{R,\mathrm{ref}}_{\mathrm{next}}(u)\|_H
    &=
    \|\mathcal R_\theta(\hat u^R(u),\hat h_1(u))
    -
    \bar{\mathcal R}(r(u),h_1(u))\|_H \\
    &\le
    \|\mathcal R_\theta(\hat u^R(u),\hat h_1(u))
    -
    \bar{\mathcal R}(\hat u^R(u),\hat h_1(u))\|_H \\
    &\quad
    +
    \|\bar{\mathcal R}(\hat u^R(u),\hat h_1(u))
    -
    \bar{\mathcal R}(r(u),h_1(u))\|_H \\
    &<
    \frac{\varepsilon}{4}+\frac{\varepsilon}{4}
    =
    \frac{\varepsilon}{2}.
\end{aligned}
\]
Thus
\[
    \sup_{u\in K}
    \|u^R_{\mathrm{next}}(u)
    -
    u^{R,\mathrm{ref}}_{\mathrm{next}}(u)\|_H
    <
    \frac{\varepsilon}{2}.
\]

\paragraph{Step 5: Combining the estimates.}
For every $u\in K$, the IMNO prediction is
\[
    \mathcal G_\theta(u)
    =
    u^M_{\mathrm{next}}(u)+u^R_{\mathrm{next}}(u),
\]
while the exact one-step solution is
\[
    \mathcal{G}(u)
    =
    u^{M,\mathrm{ref}}_{\mathrm{next}}(u)
    +
    u^{R,\mathrm{ref}}_{\mathrm{next}}(u).
\]
From Steps 3 and 4,
\[
\begin{aligned}
    &\|u^M_{\mathrm{next}}(u)
    -
    u^{M,\mathrm{ref}}_{\mathrm{next}}(u)\|_H
    +
    \|u^R_{\mathrm{next}}(u)
    -
    u^{R,\mathrm{ref}}_{\mathrm{next}}(u)\|_H \\
    &\qquad
    <
    \frac{\varepsilon}{2}+\frac{\varepsilon}{2}
    =
    \varepsilon.
\end{aligned}
\]
Taking the supremum over $u\in K$ gives the first claim.

Finally, by the triangle inequality,
\[
\begin{aligned}
    \|\mathcal G_\theta(u)-\mathcal{G}(u)\|_H
    &=
    \|
    u^M_{\mathrm{next}}(u)+u^R_{\mathrm{next}}(u)
    -
    u^{M,\mathrm{ref}}_{\mathrm{next}}(u)
    -
    u^{R,\mathrm{ref}}_{\mathrm{next}}(u)
    \|_H \\
    &\le
    \|u^M_{\mathrm{next}}(u)
    -
    u^{M,\mathrm{ref}}_{\mathrm{next}}(u)\|_H
    +
    \|u^R_{\mathrm{next}}(u)
    -
    u^{R,\mathrm{ref}}_{\mathrm{next}}(u)\|_H.
\end{aligned}
\]
Hence
\[
    \sup_{u\in K}
    \|\mathcal G_\theta(u)-\mathcal{G}(u)\|_H
    <
    \varepsilon.
\]
This completes the proof.
\end{proof}

\subsection*{B. Recurrent Neural Operator (RNO)}

The Recurrent Neural Operator (RNO) is a neural-operator architecture
for learning the time evolution of PDEs. Unlike a standard
autoregressive neural operator, which predicts the next state solely
from the current solution field, RNO maintains a function-valued hidden
state that carries information across time steps \cite{liu2023tipping}. 

Let $u_t(x)$ denote the solution field at time step $t$. The input is
first mapped to a latent feature field through a pointwise lifting
layer,
\[
    v_t(x)=\mathcal P(u_t)(x).
\]
Here, $\mathcal P$ is a pointwise lifting layer implemented by an MLP.

The hidden state $h_t(x)$ is then updated using a gated recurrent
mechanism. The update gate and reset gate are defined by
\[
    z_t(x)
    =
    \sigma\!\left(
        \Phi_z(v_t)(x)
        +
        \Psi_z(h_t)(x)
        +b_z
    \right),
\]
and
\[
    r_t(x)
    =
    \sigma\!\left(
        \Phi_r(v_t)(x)
        +
        \Psi_r(h_t)(x)
        +
        b_r
    \right),
\]
respectively. The candidate hidden state is
\[
    \widetilde h_t(x)
    =
    \varphi\!\left(
        \Phi_h(v_t)(x)
        +
        \Psi_h\bigl(r_t\odot h_t\bigr)(x)
        + b_h
    \right),
\]
where $\sigma$ denotes the sigmoid function, $\varphi$ is the SELU
activation function, and $\odot$ denotes pointwise multiplication.

The next hidden state is obtained by interpolating between the previous
hidden state and the candidate state:
\[
    h_{t+1}(x)
    =
    \bigl(1-z_t(x)\bigr)\odot h_t(x)
    +
    z_t(x)\odot\widetilde h_t(x).
\]
Thus, the update gate $z_t$ controls how much of the previous hidden
state is retained, while the reset gate $r_t$ controls how the previous
hidden state contributes to the candidate state.

The operators
\[
    \Phi_z,\ \Phi_r,\ \Phi_h,
    \qquad
    \Psi_z,\ \Psi_r,\ \Psi_h
\]
are neural operators. In the implementation considered here, each of
them is realized by a single FNO layer, allowing
the recurrent update to incorporate nonlocal spatial interactions
through spectral convolution.

Finally, the predicted solution at the next time step is obtained by
projecting the updated hidden state back to the solution space:
\[
    u_{t+1}(x)
    =
    \mathcal Q(h_{t+1})(x),
\]
where $\mathcal Q$ is a pointwise projection layer.

\end{document}